\definecolor{marin}{rgb}   {0.,   0.3,   0.7}
\definecolor{rouge}{rgb}   {0.8,   0.,   0.}
\definecolor{sepia}{rgb}   {0.8,   0.5,   0.}
\newcommand{\e}{\ensuremath{\mathrm{e}}}
\newcommand{\ts}{h}
\newtheorem{lemma}{Lemma}[section]
\newtheorem{theorem}[lemma]{Theorem}
\newtheorem{proposition}[lemma]{Proposition}
\newtheorem{corollary}[lemma]{Corollary}
\newtheorem{remark}[lemma]{Remark}
\numberwithin{equation}{section}
\newcommand{\QED}{\mbox{}\hfill \raisebox{-0.2pt}{\rule{5.6pt}{6pt}\rule{0pt}{0pt}}
          \medskip\par}
\title{On alternating-conjugate splitting methods} 
\author{J. Bernier$^{1}$, S. Blanes$^{2}$, F. Casas$^{3}$, A. Escorihuela-Tom\`as$^{4}$ \\[2ex]
$^{1}$ {\small\it Nantes Universit\'e, CNRS, Laboratoire de Math\'ematiques Jean Leray, LMJL, F-44000 Nantes, France}\\{
\small\it email: joackim.bernier@univ-nantes.fr}\\[1ex]
$^{2}$ {\small\it Universitat Polit\`ecnica de Val\`encia, Instituto de Matem\'atica Multidisciplinar, 46022-Valencia, Spain}\\{
\small\it email: serblaza@imm.upv.es}\\[1ex]
$^{3}$ {\small\it Departament de Matem\`atiques and IMAC, Universitat Jaume I, 12071-Castell\'on, Spain}\\{
\small\it email: Fernando.Casas@mat.uji.es}\\[1ex]
$^{4}$ {\small\it Departament de Matem\`atiques and IMAC, Universitat Jaume I, 12071-Castell\'on, Spain}\\{
\small\it email: alescori@uji.es}\\[1ex]
}
\begin{document}
\mathsurround 0.8mm

\maketitle

\begin{abstract}

The new  class of alternating-conjugate splitting methods is presented and analyzed. They are obtained by concatenating a given composition involving
complex coefficients with the same composition but with the complex conjugate coefficients.  
We show that schemes of this type exhibit good long
time behavior when applied to linear unitary and linear Hamiltonian systems, in contrast with other methods based on complex coefficients, and study in detail
their preservation properties. We also
present new schemes within this class up to order 6 that exhibit better efficiency than state-of-the-art splitting methods with real coefficients for some classes of problems.

\end{abstract}

\section{Introduction} \label{sec.1}

Splitting methods are widely used for the numerical integration in time of initial value problems
defined by ordinary and partial differential equations that can 
 be subdivided into different parts easier to solve than the original
system \cite{mclachlan02sm}. Splitting methods
 are typically easy to implement, present favorable properties concerning error propagation and  preserve in addition
a wide variety of structural properties inherent to the continuous system they are approximating. 
In this respect, they constitute paradigmatic examples of
geometric numerical integrators \cite{blanes16aci,hairer06gni}. Relevant examples are Hamiltonian systems separated into kinetic and potential energy in classical mechanics
and the time-dependent Schr\"odinger equation in quantum mechanics, where high-order splitting schemes have shown their effectiveness (see \cite{blanes24smf} for a recent review).

Advantageous as they are, splitting methods have also some drawbacks. One of them is related with an essential feature of this class of schemes: 
they necessarily involve negative coefficients when their order is three or higher. This makes them unsuitable, in particular, for non-reversible systems such as reaction-diffusion equations,
since the presence of negative coefficients induce severe instabilities in the numerical solution. 
Even in the context of ordinary differential equations, having negative coefficients usually leads to large truncation errors, and so it is recommended to include additional (free)
parameters to reduce these errors.

Splitting methods of order higher than two can still be used in
 equations evolved by semigroups, such as parabolic equations of evolution, if they involve 
instead \emph{complex} coefficients having \emph{positive real part}, as first shown in \cite{hansen09hos,castella09smw}. 
In that way not only the stability is recovered but in addition splitting methods of higher orders typically improve the efficiency with respect to lower orders over a wide
range of accuracies \cite{blanes13oho}. 

In fact, methods of this class were already
considered in the first explorations of splitting methods in the early 1990s \cite{bandrauk91ies,suzuki90fdo,suzuki91gto,suzuki95hep} and later 
in the context of symplectic integration of 
Hamiltonian systems \cite{chambers03siw} and quantum mechanics  \cite{bandrauk06cis,prosen06hon}. This was somehow natural, since it was soon realized
that the order conditions, even of low order, admit complex solutions in addition to the real ones.

More recently, a systematic study of splitting methods with
complex coefficients has been carried out, not only by designing new high order schemes but also by analyzing their preservation properties, for both ordinary and partial
differential equations \cite{blanes22asm,blanes22osc,bernier23scs,blanes24scs}. As is well known, when considering splitting methods with real coefficients, 
left-right palindromic (or symmetric) compositions are usually preferred, since their construction is easier and
in addition they provide time-symmetric approximations to the exact solution. The situation is different, however, with complex coefficients, at least in the context
of linear equations evolving in the unitary group \cite{blanes22osc,bernier23scs} and also for parabolic problems \cite{blanes24scs}. In fact, when considering
complex coefficients, other patterns are more suitable. In particular,  
\emph{symmetric-conjugate} schemes (i.e., methods with complex coefficients which are symmetric in the real
part and antisymmetric in the imaginary part) present good preservation properties in both cases. This is true even when dealing with real ODEs and the numerical
approximation is projected after each time step to 
its real part \cite{blanes22osc}.

Although the results presented in the above mentioned references concerning symmetric-conjugate splitting methods
are promising for their practical application to integrate linear unitary problems
and evolution equations of parabolic type, it is fair to say that the analysis of splitting methods involving complex coefficients is far from complete. In particular
several questions naturally arise: are there other types of schemes also possessing good preservation properties when applied to this class of problems? 
In the case of real Hamiltonian systems, is the energy almost preserved for long time integrations, as for splitting methods with real coefficients?
 What happens when one applies these methods to nonlinear evolution problems, such as nonlinear
Schr\"odinger or complex Ginzburg--Landau equations?

In this work we address some of these issues, and in particular we present and analyze a rather general class of splitting methods with complex coefficients
obtained by concatenating a given composition with the same composition but with complex conjugate coefficients. We show that schemes of this type (which can be properly called  \emph{alternating-conjugate} methods)
exhibit a good long time behavior when they
are applied to linear unitary and Hermitian flows with a sufficiently small time step, and also for linear Hamiltonian systems, due to their
special spectral properties. This is in contrast with symmetric and symmetric-conjugate schemes and account for some observations previously reported in the literature.

The plan of the paper is as follows. In section \ref{sec.2} we carry out some numerical experiments illustrating the behavior exhibited by different splitting methods
of order 3 and 4, and in particular the good preservation properties of alternating-conjugate schemes. 
Section \ref{sec.3} is devoted to analyze the qualitative properties
of these methods applied to linear unitary and Hermitian flows, whereas section \ref{sec.4} deals with linear real Hamiltonian systems. Section \ref{sec.5}
presents an strategy to construct alternating-conjugate composition schemes based on the study of the modified vector field. The new methods are then
illustrated on several numerical examples. 
Finally, section \ref{sec.6} contains some concluding remarks.

\section{Some numerical observations}
\label{sec.2}

In this section we consider the simple problem of approximating $U(t) = \e^{i t H}$, where $H$ is a $N \times N$ Hermitian matrix. As is well known,
$U(t)$ is a unitary matrix, $U^{*}(t) = U(t)^{-1} \in \mathbb{C}^N \times \mathbb{C}^N$, and can be seen as the solution of the differential equation

\begin{equation} \label{uf.1}
   \frac{d U}{dt} = i H U, \qquad U(0) = I. 
\end{equation}
Suppose $H$ can be decomposed as $H = A + B$ so that computing $\e^{i t A}$ and $\e^{i t B}$ is easier (or less costly) than solving \eqref{uf.1}. Then it makes sense 
to construct splitting methods of the form 
\begin{equation} \label{bch1}
 \Psi_{h}^{[p]} = \e^{i a_{s+1} h A} \, \e^{i b_s h B} \, \e^{i a_s h A} \, \cdots \, \e^{i a_2 h A} \, \e^{i b_1 h B} \, \e^{i a_{1} h A}
\end{equation}
with suitable coefficients $a_j$, $b_j$ so that
\[
  U(h) = \Psi_{h}^{[p]} + \mathcal{O}(h^{p+1}), \qquad h \rightarrow 0
\]  
for a time step $t = h$. Simple examples are the Lie--Trotter scheme $\Psi_{h}^{[1]} = \e^{i h A} \, \e^{i h B}$ and the Strang splitting
\begin{equation} \label{strang}
  \Psi_{h}^{[2]} \equiv \e^{\frac{1}{2} i h A} \, \e^{i h B} \, \e^{\frac{1}{2} i h A},  
\end{equation}
of order $p=1$ and 2, respectively. Methods with complex coefficients already exist at order 3 \cite{bandrauk91ies},
\begin{equation} \label{scheme3}
  \Psi_{h}^{[3]} = \e^{i a_1 h A} \, \e^{i b_1 h B} \, \e^{i a_2 h A} \, \e^{i \overline{b}_1 h B} \, \e^{i \overline{a}_1 h A}, \qquad 
  a_1 = \frac{\gamma}{2}, \;\; b_1 = \gamma = \frac{1}{2}  \pm i \frac{\sqrt{3}}{6}, \;\; a_2 = \frac{1}{2},
\end{equation}
which corresponds to the composition of the Strang splitting 
\begin{equation} \label{bc3}
   \Psi_{h}^{[3]}  = \Psi_{\gamma h}^{[2]} \,\, \Psi_{\overline{\gamma} h}^{[2]}
\end{equation} 
and order 4, namely
\begin{equation}   \label{schemes4}
\begin{aligned}
 & \Psi_{h,p}^{[4]} = \e^{i a_1 h A} \, \e^{i b_1 h B} \, \e^{i a_2 h A} \, \e^{i b_2 h B} \, \e^{i a_2 h A} \, \e^{i b_1 h B} \, \e^{i a_1 h A}   \\
 & \Psi_{h,sc}^{[4]} =  \e^{i a_1 h A} \, \e^{i b_1 h B} \, \e^{i a_2 h A} \, \e^{i b_2 h B} \, \e^{i \overline{a}_2 h A} \, \e^{i \overline{b}_1 h B} \, \e^{i \overline{a}_1 h A}. 
\end{aligned} 
\end{equation}
The coefficents $a_j, b_j$ in \eqref{schemes4} are obtained by inserting the Strang splitting \eqref{strang} into the triple-jump composition \cite{creutz89hoh,suzuki90fdo,yoshida90coh}
\begin{equation} \label{tjc}
  \Psi_{\gamma_1 h}^{[2]} \,\, \Psi_{\gamma_2 h}^{[2]} \,\, \Psi_{\gamma_3 h}^{[2]}
\end{equation}
with 
\[
\begin{aligned}
 & \Psi_{h,p}^{[4]}:     \qquad       \gamma_1=\gamma_3 = \frac{1}{2 - 2^{1/3} \e^{2 i \ell \pi/3}}
	, \qquad
	\gamma_2 = 1- 2 \gamma_1
	, \qquad  \ell=1,2 \\
 & \Psi_{h,sc}^{[4]}:   \qquad	  \gamma_1=\overline{\gamma}_3=  \frac{1}4 \pm i \, \frac14\sqrt{\frac53}
	,  	\qquad \gamma_2 = \frac12, 
\end{aligned}
\]
respectively. Notice the different pattern in the distribution of the coefficients: whereas $\Psi_{h,p}^{[4]}$ is a \emph{palindromic} 
(and hence time-symmetric or time-reversible) composition, both $\Psi_{h}^{[3]}$ 
and $\Psi_{h,sc}^{[4]}$ are
\emph{symmetric-conjugate} \cite{blanes22osc}. 

For our first experiment we take $N=10$, generate a \emph{real} symmetric matrix $H$ with uniformly distributed elements in $(0,1)$ so that it has simple eigenvalues, 
and then decompose $H$ as a sum of (generally non-symmetric) \emph{real} matrices
$A$ and $B$. Specifically, we generate $A$ as a real matrix with uniformly distributed elements in $(0,1)$ and then $B$ is taken as $B = H - A$.
Then we compute the matrices generated by $\Psi_{h}^{[3]}$, $\Psi_{h,p}^{[4]}$ and $\Psi_{h,sc}^{[4]}$ for different values of $h$, determine their eigenvalues $\omega_j$
and compute
\begin{equation} \label{dh}
  D_h = \max_{1 \le j \le N} (|\omega_j| -1).
\end{equation}  
This quantity is finally plotted as a function of $h$. The results are shown in Figure \ref{figure.1} (left): both $\Psi_{h}^{[3]}$ (black) and $\Psi_{h,sc}^{[4]}$ (blue) behave as unitary maps for values of
$h$ in an interval $(0,h^*)$ for some $h^*$ depending on the particular $H$ taken ($|\omega_j| = 1$ for all $j$, except round-off), but this is not the case of the time-symmetric
composition $\Psi_{h,p}^{[4]}$ (red). The plot illustrates a feature rigorously established in our previous contribution \cite{bernier23scs}: symmetric-conjugate splitting methods,
under quite general hypothesis on the matrix $H$, are indeed conjugate to unitary maps for sufficiently small values of $h$, and in addition, they preserve the norm and energy of the
numerical solution over long time integrations.

We next consider the same problem but now we alternate each method with the one obtained by reversing the sign of the imaginary part of the coefficients. 
In other words, we apply the 4th-order alternating-conjugate splitting schemes
\begin{equation} \label{ac.1}
  S_{h}^{(1)} \equiv \Psi_{h/2}^{[3]} \,\,  \widetilde{\Psi}_{h/2}^{[3]}, \qquad\quad S_{h}^{(2)} \equiv \Psi_{h/2,p}^{[4]} \,\,  \widetilde{\Psi}_{h/2,p}^{[4]} \qquad 
  \mbox{ and } \qquad 
  S_{h}^{(3)} \equiv \Psi_{h/2,sc}^{[4]} \,\, \widetilde{\Psi}_{h/2,sc}^{[4]}
\end{equation}
with
\[
\begin{aligned}
  & \widetilde{\Psi}_{h}^{[3]} = \e^{i \, \overline{a}_1 h A} \, \e^{i \, \overline{b}_1 h B} \, \e^{i a_2 h A} \, \e^{i b_1 h B} \, \e^{i a_1 h A} \\
 & \widetilde{\Psi}_{h,p}^{[4]} = \e^{i \, \overline{a}_1 h A} \, \e^{i \, \overline{b}_1 h B} \, \e^{i \, \overline{a}_2 h A} \, \e^{i \, \overline{b}_2 h B} \, 
 \e^{i \, \overline{a}_2 h A} \, \e^{i \, \overline{b}_1 h B} \, \e^{i \, \overline{a}_1 h A}   \\
 & \widetilde{\Psi}_{h,sc}^{[4]} =  \e^{i \, \overline{a}_1 h A} \, \e^{i \, \overline{b}_1 h B} \, \e^{i \, \overline{a}_2 h A} \, \e^{i b_2 h B} \, \e^{i a_2 h A} \, \e^{i b_1 h B} \, 
 \e^{i a_1 h A} 
\end{aligned} 
\]
and compute again $D_h$ as a function of $h$. Notice that $S_{h}^{(2)}$ is now symmetric-conjugate, whereas both $S_{h}^{(1)}$ and $S_{h}^{(3)}$ 
are time-symmetric. In spite of this, all of them
behave as unitary maps in a certain interval of values of $h$, as shown in Figure \ref{figure.1} (right). Thus, by concatenating a symmetric method $\Psi_{h}$ with
$\widetilde{\Psi}_{h}$
we get a new scheme of the same (or higher) 
order and better preservation properties, whereas alternating a symmetric-conjugate method does not degrade its good behavior. Notice that
for this example $\widetilde{\Psi}_{h}$ is just the complex conjugate of $\Psi_{h}$, since $A$ and $B$ are real.

\begin{figure}[!ht] 
\centering
  \includegraphics[width=.49\textwidth]{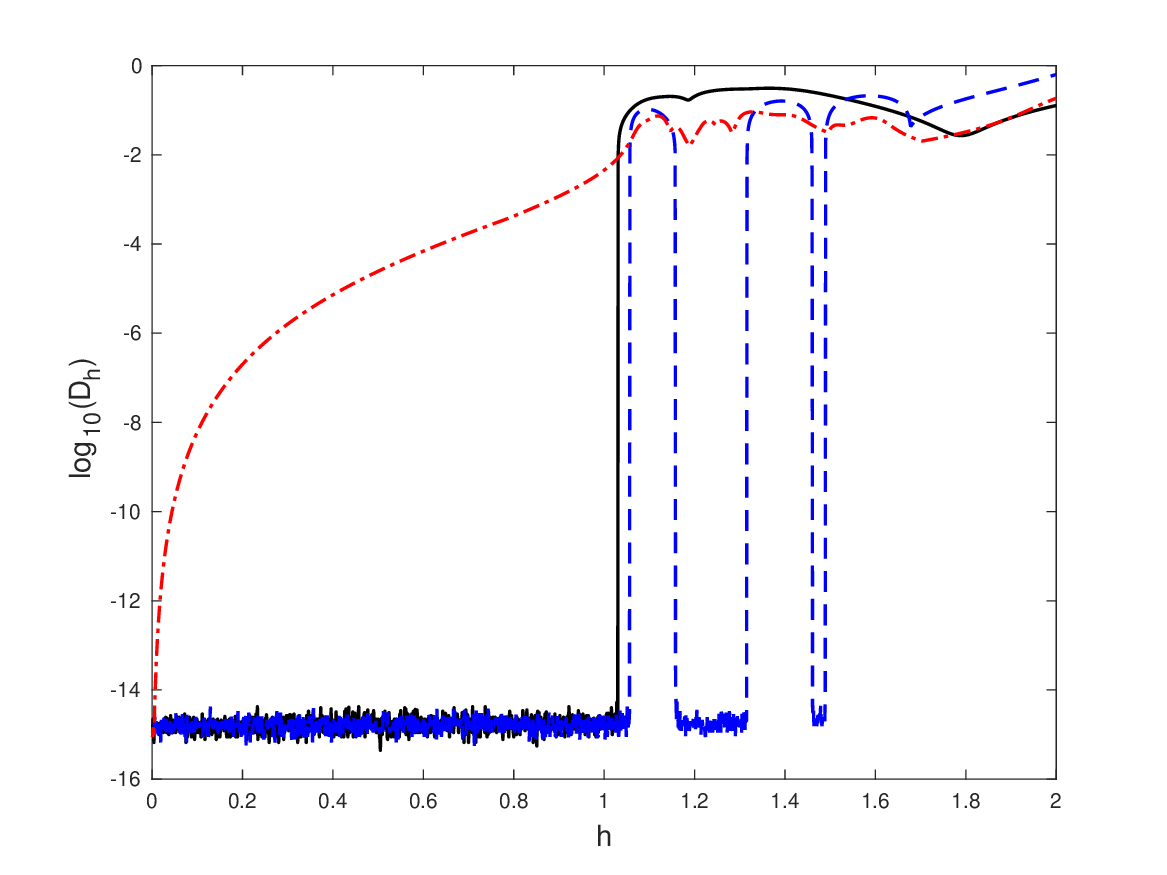}
  \includegraphics[width=.49\textwidth]{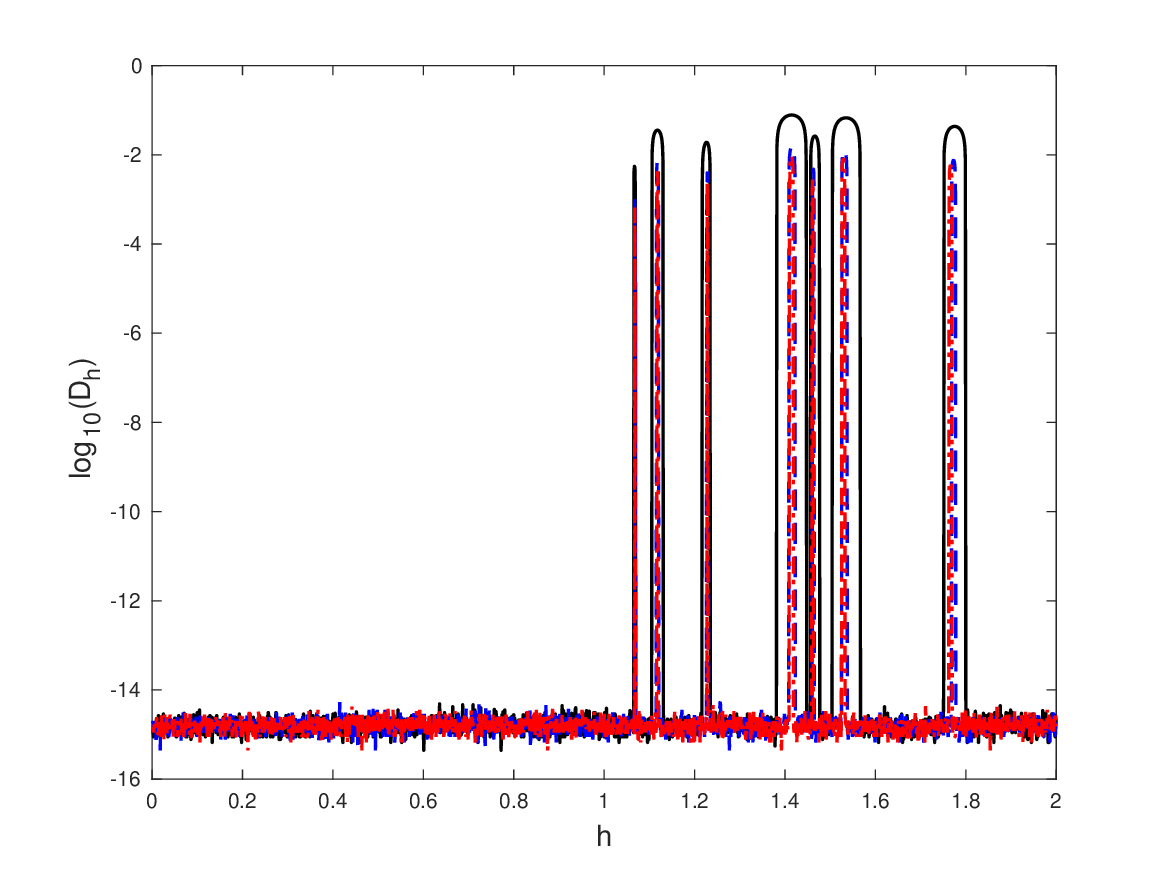}
\caption{\label{figure.1} \small Left panel: function $D_h$ defined by \eqref{dh} vs. the step size $h$ obtained by schemes \eqref{scheme3} and \eqref{schemes4} applied
to eq. \eqref{uf.1} when $H$ is a $10 \times 10$ \emph{real symmetric} matrix with simple eigenvalues. 
$\Psi_{h}^{[3]}$ (black {solid}) and $\Psi_{h,sc}^{[4]}$ (blue {dashed}) are symmetric-conjugate, whereas
$\Psi_{h,p}^{[4]}$ (red {dash-dotted}) is palindromic. Right panel: results achieved by the alternating-conjugate splitting methods \eqref{ac.1} on the same problem. Now all methods
behave as unitary maps for small enough $h$.}
\end{figure}

In our second experiment we generate a random Hermitian matrix $H \in \mathbb{C}^N \times \mathbb{C}^N$ with \emph{simple} 
eigenvalues, split $H$ as a sum of two 
\emph{complex} matrices $A$, $B$, and compare again the results obtained
by the previous schemes. {Here again we form a complex matrix $A_1$ with real and imaginary parts generated from the uniform distribution in $(0,1)$, then we
take $A = A_1 + A_1^*$ and finally $B= H-A$.}
Figure \ref{figure.2} (left) corresponds to methods \eqref{scheme3} and \eqref{schemes4}. 
Now $D_h > 0$ for any $h > 0$ for both the palindromic
and the symmetric-conjugate 4th-order splitting methods  \eqref{schemes4}, whereas $\Psi_{h}^{[3]}$ still behaves as a unitary method for $h < h^*$. 
If, on the other hand, the alternating-conjugate compositions \eqref{ac.1} are considered, then
the resulting approximations have all their eigenvalues $|\omega_j| = 1$ for small enough $h$(Figure \ref{figure.2}, right). 
We thus may conclude that concatenating a given method with the same composition
with complex conjugate coefficients leads to a new scheme with improved preservation properties, at least for unitary flows. One may wonder why scheme \eqref{scheme3} still behaves
as a unitary map here: the reason is that it is itself a composition of this type, as \eqref{bc3} clearly shows.

\begin{figure}[!ht] 
\centering
  \includegraphics[width=.49\textwidth]{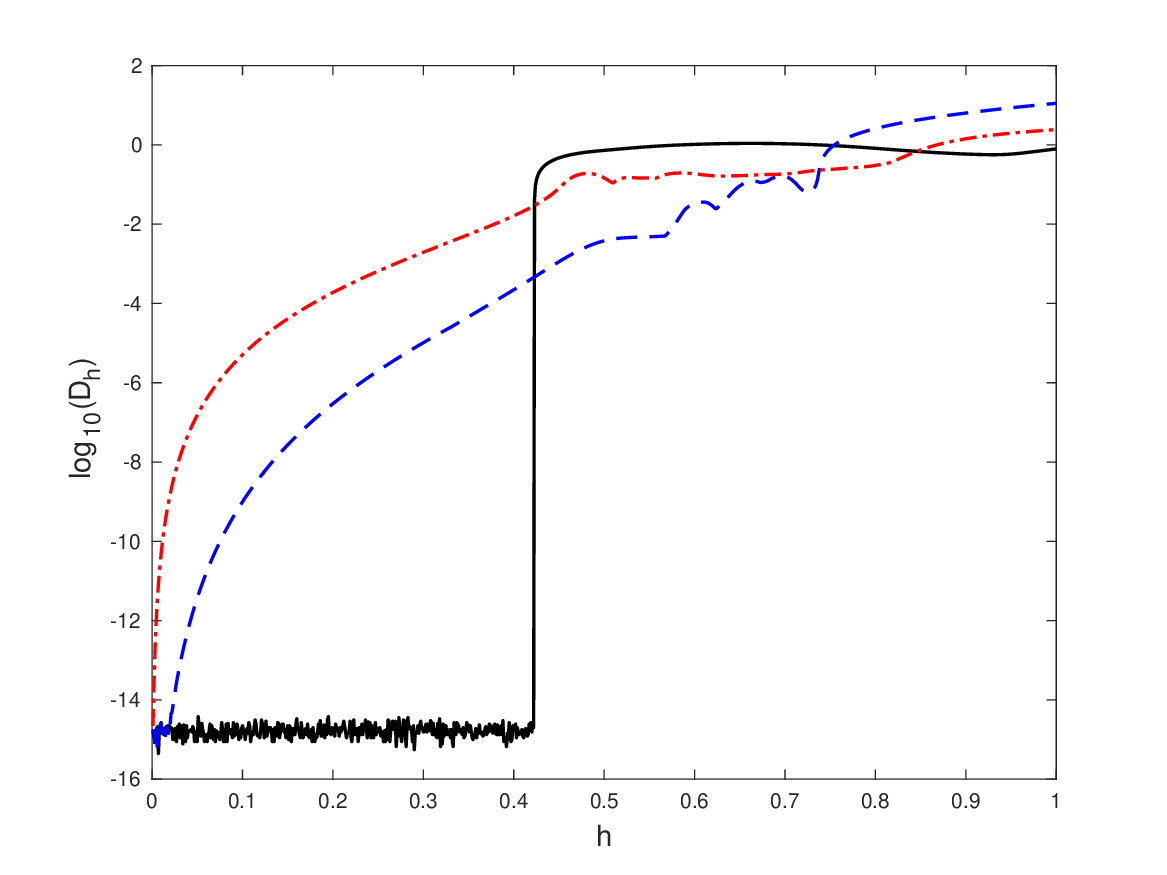}
  \includegraphics[width=.49\textwidth]{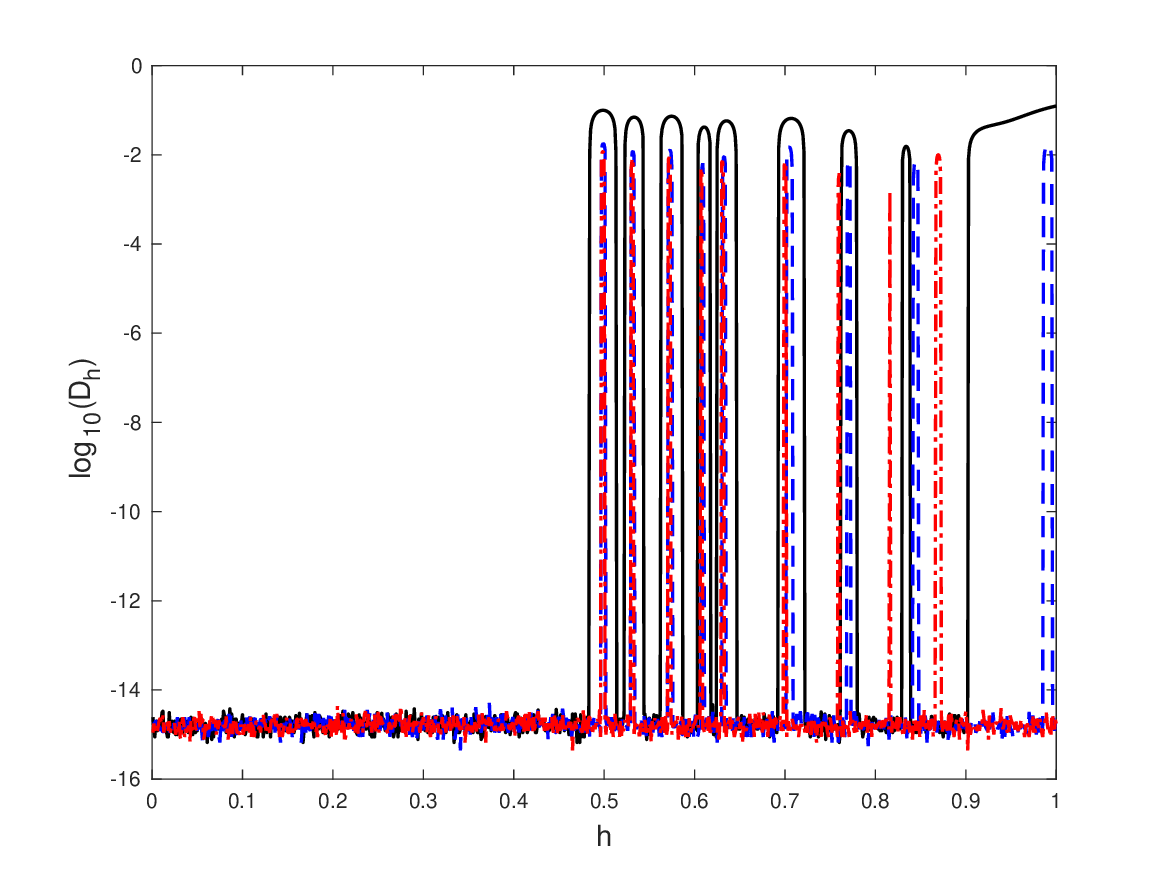}
\caption{\label{figure.2} \small Same as Figure \ref{figure.1} when $H$ is a $10 \times 10$ \emph{complex} Hermitian matrix with \emph{simple} eigenvalues. 
The schemes depicted are $\Psi_{h}^{[3]}$ (black {solid}), $\Psi_{h,sc}^{[4]}$ (blue {dashed}) and
$\Psi_{h,p}^{[4]}$ (red {dash-dotted}). Right panel corresponds to the alternating-conjugate version of the previous methods \eqref{ac.1}. In this case, symmetric-conjugate
methods do not provide a correct qualitative description of the system.
}
\end{figure}

Finally, we take again a Hermitian matrix $H \in \mathbb{C}^N \times \mathbb{C}^N$, but now with \emph{repeated} eigenvalues. {This is done by first fixing a diagonal matrix $C$ with the eigenvalues, then by generating a random unitary matrix $P$ and finally computing $H = P C P^*$. Then we proceed as in the first example
to express $H$ as the sum of two Hermitian matrices $A$ and $B$}. 
 The results for \eqref{scheme3} and \eqref{schemes4} are depicted in Figure \ref{figure.3} (left), whereas those for the
alternating-conjugate schemes \eqref{ac.1} are shown in Figure \ref{figure.3} (right). Notice that, among splitting methods with complex coefficients,  
alternating-conjugate methods provide the best qualitative description of the unitary flow $U(t) = \e^{i t H}$. 

\begin{figure}[!ht] 
\centering
  \includegraphics[width=.49\textwidth]{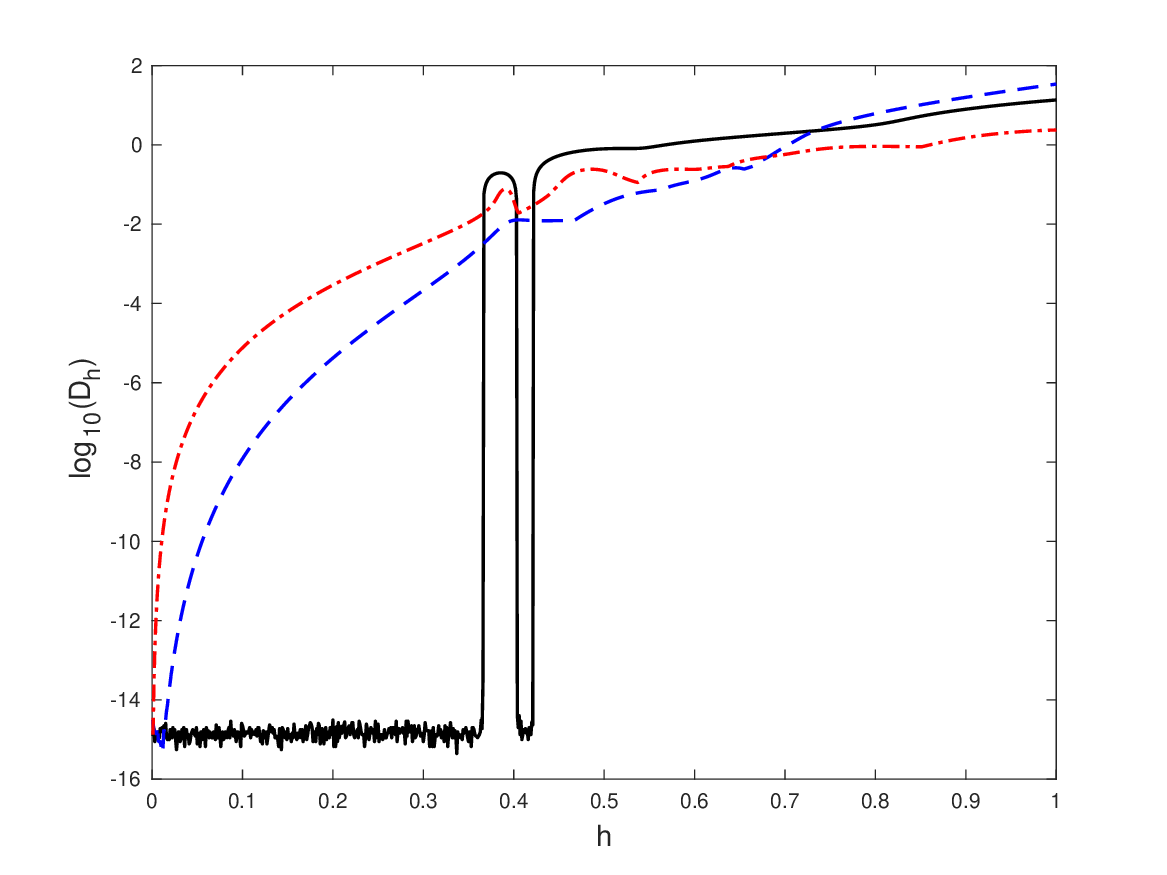}
  \includegraphics[width=.49\textwidth]{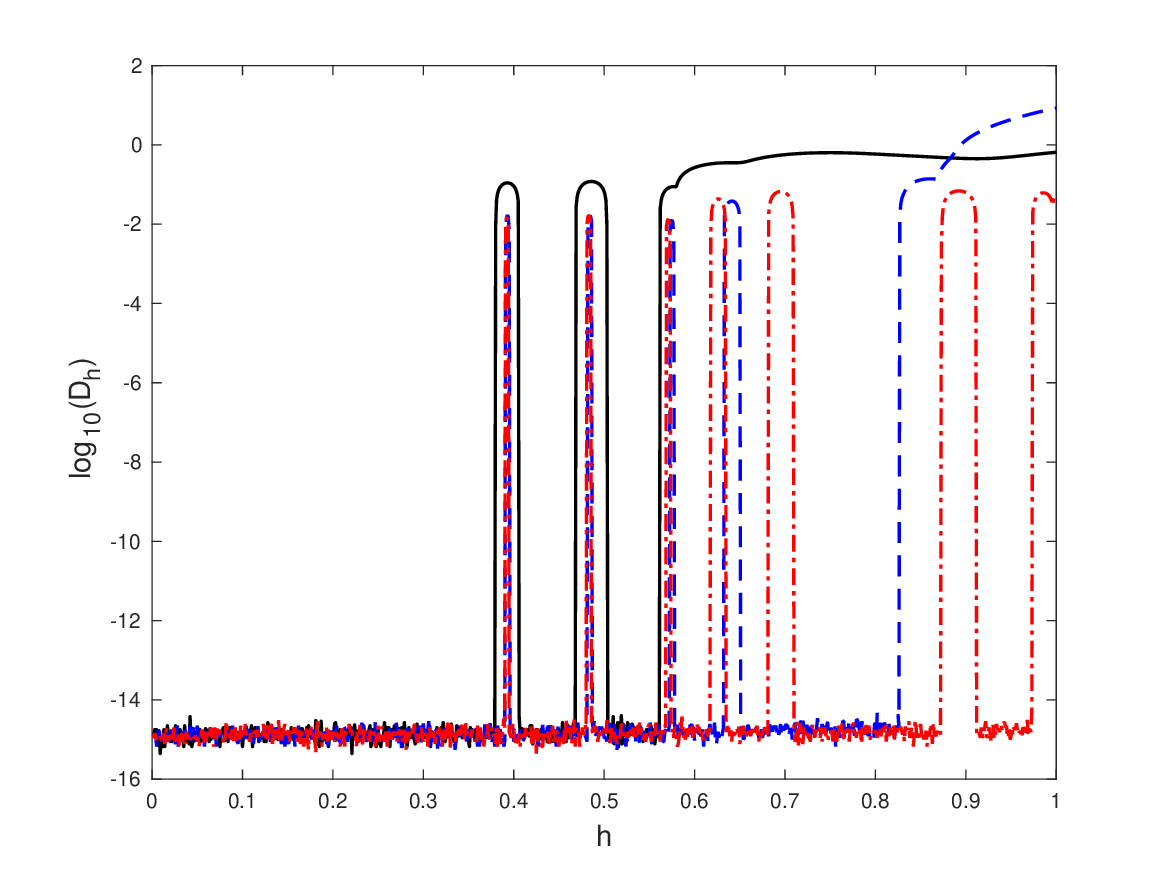}
\caption{\label{figure.3} \small Same as Figures \ref{figure.1} and \ref{figure.2} when $H$ is a $10 \times 10$ \emph{complex} Hermitian matrix with \emph{repeated}
 eigenvalues. The schemes depicted are $\Psi_{h}^{[3]}$ (black {solid}), $\Psi_{h,sc}^{[4]}$ (blue {dashed}) and
$\Psi_{h,p}^{[4]}$ (red {dash-dotted}). Right panel corresponds to the alternating-conjugate version of the previous methods \eqref{ac.1}.
Only alternating-conjugate methods behave as unitary integrators in this case. 
}
\end{figure}

While the observed behavior of
symmetric-conjugate schemes can be understood in virtue of the study carried out in \cite{bernier23scs}, this analysis does not explain the favorable properties exhibited by 
alternating-conjugate methods in the above examples, so that an extension is clearly desirable. This is addressed in the next section, largely based on the
spectral properties of the schemes.

\section{Alternating-conjugate splitting methods and their preservation properties }
\label{sec.3}

For our research we consider the generic composition
\begin{equation}
\label{eq:Sh}
S_h := \e^{\ts\alpha_1 A_1} \, \cdots \, \e^{\ts\alpha_m A_m} \, \e^{\ts\overline{\alpha}_1 A_1} \, \cdots \, \e^{ \ts \overline{\alpha}_m A_m},
\end{equation}
where $\ts \ll 1$ is the time step, $m\geq 2$, $A_1,\ldots, A_m \in \mathbb{C}^{N\times N}$ are some $N\times N$ complex matrices and $\alpha_1,\cdots,\alpha_m \in \mathbb{C}$ are certain coefficients.

Clearly, scheme \eqref{eq:Sh} can be expressed as
\begin{equation}
\label{eq:psihtilde}
S_h = \Psi_{\ts} \, \widetilde{\Psi}_{\ts}, \qquad \mbox{ where } \qquad \left\{ \begin{aligned}
  & \Psi_{\ts} =  \e^{\ts\alpha_1 A_1} \, \cdots \, \e^{\ts\alpha_m A_m} \\
   & \widetilde{\Psi}_{\ts} =  \e^{\ts\overline{\alpha}_1 A_1} \, \cdots \, \e^{\ts\overline{\alpha}_m A_m}, \end{aligned} \right.
\end{equation}
and our goal is to show that $S_h$ actually enjoys good qualitative properties, in the sense that it preserves some of the spectral symmetries of the equations it approximates.

It is worth remarking here that, among methods \eqref{eq:Sh}, some of them possess extra symmetries and are of special importance. 
Indeed, if $\Psi_\ts$ is a symmetric-conjugate method, i.e. 
\begin{equation}
\label{eq:sym_conj}
\alpha_{n-j+1} = \overline{\alpha}_{j}  \qquad \mbox{ and } \qquad A_{n-j+1} = A_j,
\end{equation}
then $\widetilde{\Psi}_{\ts}$ is also symmetric-conjugate, and the full method $S_h$ is palindromic, so that $S_h$ is time-reversible, 
$S_h^{-1} = S_{-h}$. Particular examples are schemes $S_h^{(1)}$ and $S_h^{(3)}$ in \eqref{ac.1}.

\subsection{Spectral symmetries}
\label{s_spectral}

For the analysis it is convenient to discriminate among several types of matrices and establish the spectral properties possessed by the method in each case.

\paragraph{Real matrices.} Suppose first that the matrices $A_1,\ldots, A_m \in \mathbb{R}^{N\times N}$ are real. Therefore, $S_\ts$ is consistent with the flow of a real matrix, i.e. there exists a real matrix $M\in \mathbb{R}^{N\times N}$ such that
$$
S_\ts = \e^{h M} + \mathcal{O}(h^2).
$$
Since $M$ is real, the spectrum of its flow is invariant by complex conjugation, i.e. $\sigma(\e^{h M}) = \overline{\sigma(\e^{h M})}$. In that case 
the spectrum of $S_h$ has the same symmetry, as the following argument shows.

As noted before, since the matrices  $A_1,\ldots, A_m \in \mathbb{R}^{N\times N}$ are real, then $\widetilde{\Psi}_{\ts}$ and  $\Psi_{\ts}$ are complex conjugated, 
$\widetilde{\Psi}_{\ts} = \overline{\Psi_{\ts}}$. Therefore, $S_h$ is similar to $\overline{S_h}$:
$$
\overline{S_h} = \overline{\Psi_{\ts} \, \overline{\Psi_{\ts}} } = \overline{\Psi_{\ts}} \, \Psi_{\ts} = \Psi_{\ts}^{-1} \, S_h  \,\Psi_{\ts},
$$
and a fortiori we deduce that the spectrum of $S_h$ is also invariant by complex conjugation:
$$
\sigma(S_\ts)  = \sigma(\overline{S_\ts}) =  \overline{\sigma(S_\ts)}.
$$

\paragraph{Skew-Hermitian matrices.} 

 Let us assume now that the matrices $A_1,\ldots, A_m \in \mathbb{C}^{N\times N}$ are skew-Hermitian. In consequence, $S_\ts$ is consistent with a unitary flow, i.e. there exists a Hermitian matrix $H\in \mathbb{C}^{N\times N}$ such that
$$
S_\ts = \e^{i h H} + \mathcal{O}(h^2).
$$
$H$ being Hermitian, its eigenvalues are real and so the spectrum satisfies
$\sigma(\e^{i h H}) = \overline{\mathrm{inv}( \sigma(\e^{i h H}) )}$, where $\mathrm{inv} : z\mapsto z^{-1}$ denotes the inverse function. 

It turns out that the same property holds for the spectrum of $S_h$: since the matrices  $A_1,\ldots, A_m \in \mathbb{C}^{N\times N}$ are 
skew-Hermitian, then $\widetilde{\Psi}_{\ts}$ is the inverse of the adjoint $\Psi_h^* \equiv \overline{\Psi_h}^T$, i.e.,
$$
 \widetilde{\Psi}_{\ts} = \Psi_{\ts}^{-*}.
$$
Therefore, we deduce that $S_h$ is similar to $S_h^{-*}$:
$$
S_h^{-*} = (\Psi_{\ts}\Psi_{\ts}^{-*})^{-*} = \Psi_{\ts}^{-*} \Psi_{\ts} = \Psi_{\ts}^{-1} S_\ts  \Psi_{\ts}
$$
and finally
$$
\sigma(S_\ts)  = \sigma(S_\ts^{-*}) = \overline{\mathrm{inv}(\sigma(S_\ts))},
$$
as the exact flow.

\paragraph{Hermitian matrices.} Suppose the matrices $A_1,\ldots, A_m \in \mathbb{C}^{N\times N}$ are Hermitian. Therefore, $S_\ts$ is consistent with a Hermitian flow, i.e. there exists a Hermitian matrix $H\in \mathbb{C}^{N\times N}$ such that
$$
S_\ts = e^{h H} + \mathcal{O}(h^2).
$$
Since $H$ is Hermitian, its eigenvalues are real and so $\sigma(\e^{ h H}) =  \overline{\sigma(\e^{ h H})}$. The property is also satisfied by $S_h$ when 
$\Psi_h$ is a symmetric-conjugate method (see \eqref{eq:sym_conj}).

Since the matrices  $A_1,\ldots, A_m \in \mathbb{C}^{N\times N}$ are Hermitian and $\Psi_{\ts}$, $\widetilde{\Psi}_{\ts}$ are symmetric-conjugated, 
then  $\Psi_{\ts}$ and $\widetilde{\Psi}_{\ts}$ are also Hermitian, i.e.
$$
\Psi_\ts^* = \Psi_\ts \qquad  \mbox{ and } \qquad \widetilde{\Psi}_{\ts}^* =  \widetilde{\Psi}_{\ts},
$$
and thus $S_h$ is similar to $S_h^{*}$,
$$
S_h^{*} = (\Psi_{\ts}\widetilde{\Psi}_{\ts})^{*} = \widetilde{\Psi}_{\ts}^* \Psi_\ts^*  = \widetilde{\Psi}_{\ts} \Psi_\ts = \Psi_{\ts}^{-1} S_\ts  \Psi_{\ts}.
$$
In consequence, the spectrum of $S_h$ is invariant by complex conjugation:
$$
\sigma(S_\ts)  = \sigma(S_\ts^{*}) =  \overline{\sigma(S_\ts)}.
$$

\subsection{Perturbation theory}
The spectral properties we have derived in the previous subsection are interesting but somehow quite weak. For example, when we aim at discretizing a unitary flow, we would like the numerical flow to be also unitary or at least to be similar (conjugate)
to a unitary flow. From the point of view of the eigenvalues, it means that we would not only want that they satisfy the symmetry $\sigma(S_\ts) = \overline{\mathrm{inv}(\sigma(S_\ts))}$ but they are of modulus one, that is $\sigma(S_\ts) \subset \mathbb{U}$, where $\mathbb{U} \subset \mathbb{C}$ denotes the unit circle.

We will see in later subsections that, by using perturbation theory arguments (in the same spirit as those used for reversible integrators, see e.g. 
\cite{bernier23scs}), these invariance properties can be typically strengthened to ensure that the long time behavior of the numerical flow is consistent with the one of the continuous flow. All these results rely on the following technical lemma which can be seen as a consequence of the theory of Birkhoff normal forms adapted to our setting.
It constitutes in fact a generalization of \cite[Lemma 3.7]{bernier23scs}.

\begin{lemma}
\label{lem:diag_cont}
Let $\mathfrak{g}$ be a  Lie algebra of matrices and $M_\ts \in \mathfrak{g}$ be a family of matrices depending smoothly on $\ts$ and of the form 
$$
M_\ts = M_0 + \mathcal{O}(\ts^p), \qquad \mbox{ where } \qquad  p\geq 1.
$$
 If $M_0$ is diagonalizable on $\mathbb{C}$ then there exists a family of matrices $\chi_\ts \in \mathfrak{g}$, depending smoothly on $\ts$, such that if $|\ts|$ is small enough, $\e^{-\ts^p\chi_\ts} M_\ts \, \e^{\ts^p\chi_\ts}$ commutes with $M_0$, i.e.
$$
[\e^{\ts^p\chi_\ts} M_\ts \, \e^{-\ts^p\chi_\ts} , M_0 ] =0.
$$
\end{lemma}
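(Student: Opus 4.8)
The plan is to construct $\chi_\ts$ iteratively, killing the off-diagonal part of $M_\ts$ with respect to the eigenspace decomposition of $M_0$ order by order in $\ts$, exactly as one does in Birkhoff normal form theory. First I would fix the data coming from $M_0$: since $M_0$ is diagonalizable on $\mathbb{C}$, write $\mathbb{C}^N = \bigoplus_{\lambda} E_\lambda$ for its (generalized = genuine, since diagonalizable) eigenspaces, and let $\Pi_\lambda$ be the associated spectral projectors. The adjoint action $\mathrm{ad}_{M_0} : X \mapsto [M_0, X]$ on $N\times N$ matrices is itself diagonalizable, with eigenvalues $\lambda - \mu$ over pairs $(\lambda,\mu) \in \sigma(M_0)^2$; its kernel is the commutant $\mathcal{Z} = \{X : [X,M_0]=0\}$, and its image $\mathcal{R}$ is a complement, namely the span of the blocks $\Pi_\lambda X \Pi_\mu$ with $\lambda \neq \mu$. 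On $\mathcal{R}$, $\mathrm{ad}_{M_0}$ is invertible; call the (bounded) inverse $\mathrm{ad}_{M_0}^{-1}$. The key compatibility with the Lie algebra is the point to emphasize: since $\mathfrak{g}$ is a Lie algebra of matrices and $M_0 \in \mathfrak{g}$, we have $\mathrm{ad}_{M_0}(\mathfrak{g}) \subseteq \mathfrak{g}$, hence $\mathcal{R} \cap \mathfrak{g}$ and $\mathcal{Z} \cap \mathfrak{g}$ are $\mathrm{ad}_{M_0}$-invariant and $\mathfrak{g} = (\mathcal{Z}\cap\mathfrak{g}) \oplus (\mathcal{R}\cap\mathfrak{g})$, with $\mathrm{ad}_{M_0}^{-1}$ mapping $\mathcal{R}\cap\mathfrak{g}$ into itself. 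This is what will keep every correction term inside $\mathfrak{g}$.

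Next I would set up the conjugation. Expand $M_\ts = M_0 + \ts^p M_p + \ts^{p+1} M_{p+1} + \cdots$ with all $M_j \in \mathfrak{g}$ (using smoothness and Taylor expansion; the $\mathcal{O}(\ts^p)$ hypothesis gives $M_1=\cdots=M_{p-1}=0$). Seek $\chi_\ts = \chi_0 + \ts \chi_1 + \ts^2\chi_2 + \cdots$ with $\chi_j \in \mathcal{R}\cap\mathfrak{g}$, so that $N_\ts := \e^{\ts^p\chi_\ts} M_\ts \,\e^{-\ts^p\chi_\ts}$ lies in $\mathcal{Z}\cap\mathfrak{g}$ up to arbitrarily high order. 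Using the Baker--Campbell--Hausdorff / adjoint-orbit expansion,
$$
N_\ts = M_\ts + \ts^p[\chi_\ts, M_\ts] + \tfrac{\ts^{2p}}{2}[\chi_\ts,[\chi_\ts,M_\ts]] + \cdots,
$$
and collecting powers of $\ts$, the coefficient of $\ts^p$ is $M_p + [\chi_0, M_0] = M_p - \mathrm{ad}_{M_0}(\chi_0)$. Projecting onto $\mathcal{R}$ and inverting, set $\chi_0 = \mathrm{ad}_{M_0}^{-1}\big(\Pi_{\mathcal{R}} M_p\big) \in \mathcal{R}\cap\mathfrak{g}$; then the $\ts^p$-coefficient of $N_\ts$ is $\Pi_{\mathcal{Z}}M_p \in \mathcal{Z}\cap\mathfrak{g}$, which commutes with $M_0$ by construction. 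At order $\ts^{p+k}$ the coefficient has the shape $M_{p+k} - \mathrm{ad}_{M_0}(\chi_k) + (\text{polynomial in } \chi_0,\dots,\chi_{k-1} \text{ and } M_{\le p+k-1})$; the parenthetical term is already known and lies in $\mathfrak{g}$, so again projecting onto $\mathcal{R}$ and applying $\mathrm{ad}_{M_0}^{-1}$ determines $\chi_k \in \mathcal{R}\cap\mathfrak{g}$ and leaves the residue in $\mathcal{Z}\cap\mathfrak{g}$. Iterating to order, say, $\ts^{p+N^2}$ (any fixed order past $p$ suffices, but carrying it far enough makes the remainder negligible) and then truncating $\chi_\ts$ to a polynomial in $\ts$, one gets a smooth $\chi_\ts \in \mathfrak{g}$ with $\e^{\ts^p\chi_\ts} M_\ts \e^{-\ts^p\chi_\ts} = Z_\ts + R_\ts$, where $Z_\ts \in \mathcal{Z}\cap\mathfrak{g}$ commutes with $M_0$ and $R_\ts = \mathcal{O}(\ts^{\text{large}})$ lies in $\mathfrak{g}$.

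Finally — and this is where I expect the real work to be — I would upgrade this formal/finite-order normalization to the exact statement $[\,\e^{\ts^p\chi_\ts}M_\ts\e^{-\ts^p\chi_\ts}, M_0\,]=0$. The issue is that the iterative construction only removes $\mathcal{R}$-components up to any prescribed finite order, leaving an exponentially-or-polynomially small but nonzero off-diagonal remainder $\Pi_{\mathcal{R}}R_\ts$. There are two standard ways to close the gap, and I would follow the second since it is the one used in \cite[Lemma 3.7]{bernier23scs}: either (i) run a quadratically convergent (Newton/KAM-type) iteration, at each stage replacing $M_\ts$ by its current conjugate and solving one more cohomological equation, which converges for $|\ts|$ small and produces an exact $\chi_\ts$ (smoothness in $\ts$ following from a fixed-point/implicit-function argument in a Banach space of matrix-valued smooth functions of $\ts$); or (ii) observe that once $M_\ts$ has been normalized to very high order, the remaining correction is governed by an implicit equation of the form $F(\chi,\ts)=0$ with $F$ smooth, $F(\chi_\ts^{\mathrm{trunc}},\ts) = \mathcal{O}(\ts^{\text{large}})$, and $\partial_\chi F$ invertible at $\ts=0$ on $\mathcal{R}\cap\mathfrak{g}$ (its linearization being $-\mathrm{ad}_{M_0}$ restricted to $\mathcal{R}\cap\mathfrak{g}$, which is an isomorphism); the implicit function theorem then furnishes a genuine smooth solution $\chi_\ts \in \mathcal{R}\cap\mathfrak{g}$ for $|\ts|$ small with $[\e^{\ts^p\chi_\ts}M_\ts\e^{-\ts^p\chi_\ts},M_0]=0$ exactly. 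The only subtle points to be careful about are keeping all iterates/solutions inside $\mathfrak{g}$ (guaranteed by the $\mathrm{ad}_{M_0}$-invariance of $\mathcal{R}\cap\mathfrak{g}$ noted above) and verifying that the map in the implicit function theorem is smooth jointly in $(\chi,\ts)$ — both routine given the matrix setting — so the bookkeeping of the cohomological equations and the check that $\partial_\chi F|_{\ts=0}$ is invertible constitute the heart of the argument.
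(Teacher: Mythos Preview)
Your proposal is correct and identifies the right mechanism: the splitting $\mathfrak{g}=(\mathcal{Z}\cap\mathfrak{g})\oplus(\mathcal{R}\cap\mathfrak{g})$ induced by the diagonalizable $\mathrm{ad}_{M_0}$, the invertibility of $\mathrm{ad}_{M_0}$ on $\mathcal{R}\cap\mathfrak{g}=\mathrm{Im}\,(\mathrm{ad}_{M_0})_{|\mathfrak{g}}$, and an implicit function theorem argument to produce a smooth $\chi_\ts\in\mathcal{R}\cap\mathfrak{g}$. This is exactly the skeleton of the paper's proof.

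The difference is that the paper dispenses with your entire formal Birkhoff iteration. Writing $M_\ts=M_0+\ts^p R_\ts$ and using $\e^{\ts^p\chi}M_\ts\,\e^{-\ts^p\chi}=\e^{\ts^p\mathrm{ad}_\chi}M_\ts$, the condition $[\e^{\ts^p\chi}M_\ts\,\e^{-\ts^p\chi},M_0]=0$ is rewritten, after isolating the trivial $\mathrm{ad}_{M_0}M_0=0$ term and dividing by $\ts^p$, as a single smooth equation $f(\ts,\chi)=0$ with $f:\mathbb{R}\times(\mathcal{R}\cap\mathfrak{g})\to\mathcal{R}\cap\mathfrak{g}$. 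At $\ts=0$ this reduces to $\mathrm{ad}_{M_0}(R_0-\mathrm{ad}_{M_0}\chi_0)=0$, which has the \emph{exact} solution $\chi_0=\mathrm{ad}_{M_0}^{-1}(\Pi_{\mathcal{R}}R_0)$ (your first cohomological step), and $\mathrm{d}_\chi f(0,\chi_0)=-\mathrm{ad}_{M_0}^2$ is invertible on $\mathcal{R}\cap\mathfrak{g}$. The implicit function theorem then gives $\chi_\ts$ in one stroke. So your order-by-order normalization and the subsequent ``upgrade'' of a high-order approximate solution are unnecessary detours: no truncation, no remainder estimates, no $\mathcal{O}(\ts^{\text{large}})$ are needed, because the base point of the IFT already satisfies the equation exactly. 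What your longer route buys is a more explicit Taylor expansion of $\chi_\ts$, but the lemma does not require it.
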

\begin{proof} We aim at designing the family $\chi_\ts \in \mathfrak{g}$ as solution of the equation
\begin{equation}
\label{eq:v0}
\mathrm{ad}_{M_0} \left( \e^{\ts^p\chi_\ts} M_\ts \, \e^{-\ts^p\chi_\ts} \right) =0.
\end{equation}
Let $N\geq 1$ be the integer such that $\mathfrak{g}\subset \mathfrak{gl}_N(\mathbb{C})$ (the Lie algebra of the $N\times N$ complex matrices). For a given matrix $A\in  \mathfrak{gl}_N(\mathbb{C})$, we define its adjoint representation $\mathrm{ad}_A :  \mathfrak{gl}_N(\mathbb{C}) \to  \mathfrak{gl}_N(\mathbb{C})$ by 
$$
\mathrm{ad}_A B := [A,B] = AB-BA.
$$
Thanks to the well known identity $\e^{A} B \, \e^{-A} = \e^{\mathrm{ad}_A} B$, equation \eqref{eq:v0} is just
\begin{equation} \label{eq:v1}
 \mathrm{ad}_{M_0} \left( \e^{\ts^p \mathrm{ad}_{\chi_\ts}}  M_\ts \right) = 0.
\end{equation}
By hypothesis, $M_h$ can be written as $M_\ts = M_0 + \ts^p R_\ts$,
where $R_\ts \in \mathfrak{g}$ is a family of matrices depending smoothly on $\ts$. {Now
\[
  \e^{h^p \mathrm{ad}_{\chi_\ts}} M_h = \e^{h^p \mathrm{ad}_{\chi_\ts}} M_0 + h^p \e^{h^p \mathrm{ad}_{\chi_\ts}} R_h
\]
and
\[ 
 \e^{h^p \mathrm{ad}_{\chi_\ts}} M_0 = \big( I +  \varphi_1(h^p \mathrm{ad}_{\chi_h}) \, h^p \mathrm{ad}_{\chi_h} \big) M_0 =
  M_0 - h^p  \varphi_1(h^p \mathrm{ad}_{\chi_h}) \mathrm{ad}_{M_0} \chi_h,
\]
where $\varphi_1(z) := \frac{e^z - 1}z$. In consequence,
\[
    \mathrm{ad}_{M_0} \left( \e^{\ts^p \mathrm{ad}_{\chi_\ts}}  M_\ts \right) = h^p \, \mathrm{ad}_{M_0} \left( 
    \e^{h^p \mathrm{ad}_{\chi_\ts}} R_h - \varphi_1(h^p \mathrm{ad}_{\chi_h}) \,  \mathrm{ad}_{M_0} \chi_h \right)
\]
}
so that equation \eqref{eq:v1} is equivalent to
$$
f(\ts,\chi_\ts) := \mathrm{ad}_{M_0} \left(   \e^{\ts^p \mathrm{ad}_{\chi_\ts}}  R_\ts-\varphi_1(\ts^p \mathrm{ad}_{\chi_{\ts}}) \, \mathrm{ad}_{M_0} \chi_\ts  \right) = 0.
$$
Since $\mathfrak{g}$ is a finite dimensional Lie algebra, 
we restrict ourselves to  $\chi_\ts$ in $ \mathrm{Im} \, (\mathrm{ad}_{M_0})_{| \mathfrak{g}}$ and consider $f$ as a 
smooth map from $\mathbb{R} \times  \mathrm{Im} \, (\mathrm{ad}_{M_0})_{| \mathfrak{g}}$ to $ \mathrm{Im} \, (\mathrm{ad}_{M_0})_{| \mathfrak{g}}$. 
Now, to solve the equation $f(\ts,\chi_\ts)=0$ by using the implicit function theorem, all that is necessary is to design 
$\chi_0 \in \mathrm{Im} \, (\mathrm{ad}_{M_0})_{|\mathfrak{g} }$ so that (i)
$$
f(0,\chi_0) =  \mathrm{ad}_{M_0}   \left( R_0- \mathrm{ad}_{M_0} \chi_0 \right) = 0 
$$
and (ii) $\mathrm{d}_\chi f(0,\chi_0) = - \mathrm{ad}_{M_0}^2 :  \mathrm{Im} \, (\mathrm{ad}_{M_0})_{| \mathfrak{g}} \to  \mathrm{Im} \, (\mathrm{ad}_{M_0})_{| \mathfrak{g}}$ is invertible. First we note that (i) is a direct consequence of (ii). To prove (ii), it is enough to apply \cite[Lemma 3.6]{bernier23scs} and the 
rank-nullity theorem. Indeed, this lemma states that  $M_0$ is diagonalizable on $\mathbb{C}$ if and only if the kernel and the image of $\mathrm{ad}_{M_0}$ are supplementary.
\end{proof}

\subsection{Linear unitary flows}

While in subsection \ref{s_spectral} we have seen that an alternating-conjugate splitting method $S_\ts$ with skew-Hermitian matrices
$A_j$ is consistent with a unitary flow $\e^{i\ts H}$ (with $H$ Hermitian) and that its spectrum satisfies $\sigma(S_\ts) = \overline{\mathrm{inv}(\sigma(S_\ts))}$,
the following theorem implies that, typically\footnote{{In fact, this property is valid for almost all Hermitian matrices $H$: the eigenvalues of almost all Hermitian matrices are simple, and this set of matrices is the complementary set of the algebraic submanifold of the matrices whose discriminant is equal to zero.}}, $S_h$ is indeed similar to a unitary matrix.

\begin{theorem}
\label{thm:unitary}
Let  $H \in \mathbb{C}^{N\times N}$ be a complex matrix and let $S_\ts \in \mathbb{C}^{N\times N}$ be a family of invertible complex matrices depending smoothly on $\ts\in \mathbb{R}$ such that
\begin{itemize}
\item the spectrum of $S_h$ satisfies $\sigma(S_\ts) =\overline{\mathrm{inv}(\sigma(S_\ts))}$
\item $S_\ts$ is consistent with $\exp(i\ts H)$, i.e. there exists $p\geq 1$ such that 
$
\displaystyle S_\ts \mathop{=}_{\ts\to 0} \e^{i\ts H} + \mathcal{O}(\ts^{p+1}),
$
\item the eigenvalues of $H$ are real and simple.
\end{itemize}
Then there exist 
\begin{itemize}
\item $D_\ts$,  a family of real diagonal matrices depending smoothly on $\ts$
\item $P_\ts$, a family of complex invertible matrices depending smoothly on $\ts$,
\end{itemize}
such that $P_\ts = P_0 + \mathcal{O}(\ts^p)$, $D_\ts = D_0 + \mathcal{O}(\ts^p)$  and, provided that $|\ts|$ is small enough, 
\begin{equation}
\label{eq:what_we_want}
S_\ts = P_\ts \, \e^{i\ts D_\ts} \, P_\ts^{-1}.
\end{equation}
\end{theorem}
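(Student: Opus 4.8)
The plan is to reduce the statement to a smooth diagonalization of the "modified generator" of $S_\ts$ and then to invoke Lemma~\ref{lem:diag_cont}. Since $S_\ts = \e^{i\ts H} + \mathcal{O}(\ts^{p+1})$ we have $S_0 = I$, so for $|\ts|$ small the principal logarithm $\log S_\ts$ is well defined, depends smoothly on $\ts$, and vanishes at $\ts = 0$. By Hadamard's lemma one can therefore write $\log S_\ts = \ts\, G_\ts$ with $G_\ts$ smooth, and comparing with $\log \e^{i\ts H} = i\ts H$ (legitimate for $\ts$ small, since the spectrum of $i\ts H$ lies in a thin strip around $0$ on which $\log\circ\exp$ is the identity) gives $G_\ts = iH + \mathcal{O}(\ts^p)$; in particular $G_0 = iH$ and $S_\ts = \e^{\ts G_\ts}$.

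Next, since the eigenvalues of $H$ are simple, $M_0 := iH$ is diagonalizable on $\mathbb{C}$, so Lemma~\ref{lem:diag_cont} applied with $\mathfrak{g} = \mathfrak{gl}_N(\mathbb{C})$ and $M_\ts := G_\ts$ yields a smooth family $\chi_\ts \in \mathfrak{gl}_N(\mathbb{C})$ such that, for $|\ts|$ small, $\widetilde{G}_\ts := \e^{\ts^p\chi_\ts}\, G_\ts\, \e^{-\ts^p\chi_\ts}$ commutes with $iH$; moreover $\widetilde{G}_\ts = iH + \mathcal{O}(\ts^p)$. Fixing once and for all an invertible $Q$ with $Q^{-1}(iH)Q = iD_0$, where $D_0 = \mathrm{diag}(\lambda_1,\dots,\lambda_N)$ collects the distinct real eigenvalues of $H$, the fact that $\widetilde G_\ts$ commutes with a matrix having pairwise distinct eigenvalues forces $Q^{-1}\widetilde G_\ts\, Q =: iD_\ts$ to be diagonal; this $D_\ts$ is smooth and satisfies $D_\ts = D_0 + \mathcal{O}(\ts^p)$.

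It remains to check that $D_\ts$ is real, and this is where the spectral symmetry hypothesis is used. The eigenvalues $\mu_1(\ts),\dots,\mu_N(\ts)$ of $G_\ts$ — which are also those of $\widetilde G_\ts$, i.e. the diagonal entries of $iD_\ts$ — converge to $i\lambda_1,\dots,i\lambda_N$ as $\ts\to 0$, hence are pairwise distinct and lie in a fixed small ball around the origin once $|\ts|$ is small. On the corresponding neighbourhood of $1$ the map $z\mapsto \overline{\mathrm{inv}(z)} = 1/\overline z$ becomes, after conjugation by $\mu \mapsto \e^{\ts\mu}$, the involution $\mu\mapsto -\overline\mu$; therefore $\sigma(S_\ts) = \overline{\mathrm{inv}(\sigma(S_\ts))}$ translates into the statement that $\sigma(G_\ts)$ is invariant under $\mu\mapsto-\overline\mu$. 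Since $-\overline{\mu_j(\ts)}$ is as close to $-\overline{i\lambda_j} = i\lambda_j$ as $\mu_j(\ts)$ is to $i\lambda_j$, and the $i\lambda_j$ are distinct, this involution must fix each $\mu_j(\ts)$ for $|\ts|$ small; hence $\mu_j(\ts)\in i\mathbb{R}$, i.e. $D_\ts\in\mathbb{R}^{N\times N}$. Finally, setting $P_\ts := \e^{-\ts^p\chi_\ts}Q$ — which is smooth, invertible, and equals $Q + \mathcal{O}(\ts^p) =: P_0 + \mathcal{O}(\ts^p)$ — we obtain $G_\ts = P_\ts\,(iD_\ts)\,P_\ts^{-1}$ and consequently $S_\ts = \e^{\ts G_\ts} = P_\ts\, \e^{i\ts D_\ts}\, P_\ts^{-1}$, which is \eqref{eq:what_we_want}.

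The step I expect to be the most delicate is not any individual computation but the bookkeeping around the logarithm: one must make sure that the identifications $\log\e^{i\ts H}=i\ts H$, $\log S_\ts = i\ts H + \mathcal{O}(\ts^{p+1})$, and the correspondence $z\mapsto 1/\overline z \ \leftrightarrow\ \mu\mapsto-\overline\mu$ are all carried out on a common neighbourhood on which the principal logarithm is a genuine inverse of the exponential, so that no $2\pi i$ ambiguity is introduced — this is precisely what upgrades the weak spectral symmetry of Subsection~\ref{s_spectral} into the honest conjugacy \eqref{eq:what_we_want}. Everything else is routine (Hadamard's lemma, smooth dependence of simple eigenvalues on a parameter) or a direct application of Lemma~\ref{lem:diag_cont}.
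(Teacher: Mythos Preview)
Your proof is correct and follows essentially the same route as the paper: define the modified generator via the principal logarithm of $S_\ts$, apply Lemma~\ref{lem:diag_cont} to bring it to a diagonal form commuting with $H$, and then use the spectral symmetry $\sigma(S_\ts)=\overline{\mathrm{inv}(\sigma(S_\ts))}$ together with the simplicity of the eigenvalues of $H$ to force the diagonal entries to be real. The only cosmetic differences are that the paper works with $H_\ts:=(i\ts)^{-1}\log S_\ts$ instead of your $G_\ts=iH_\ts$, and that it assumes without loss of generality that $H$ itself is diagonal rather than carrying an explicit diagonalizing matrix $Q$; your care about the domain of the logarithm is exactly the point the paper addresses by appealing to holomorphic functional calculus.
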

Using this theorem we can deduce as a corollary that the actions of $\exp(i\ts H)$ are almost preserved
 for all times as well as the associated norm and energy whenever $H$ is Hermitian. Specifically, 
\begin{corollary} 
\label{cor:simple} 
In the setting of Theorem \ref{thm:unitary}, there exists a constant $C>0$ such that, provided that $|\ts|$ is small enough,  for all $u\in \mathbb{C}^N$ and all 
eigenvalues $\omega \in \sigma(H)$, one has
\begin{equation} \label{eq:cor_est_eigs}
 \sup_{n\geq 0} \, \Big||\Pi_\omega S_\ts^{n} u| - |\Pi_\omega  u| \Big| \leq C |\ts|^p |u|,
\end{equation}
where $\Pi_\omega$ denotes the spectral projector onto $\mathrm{Ker}(H-\omega I_N)$. Moreover, if $H$ is Hermitian, the norm and the energy are almost conserved, 
in the sense that, for all $u\in \mathbb{C}^N$, it holds that 
\begin{equation} \label{eq:cor_mass_energ}
  \sup_{n \in \mathbb{Z}} \,  \big| \mathcal{M}(S_\ts^n u) - \mathcal{M}( u)  \big|\leq C |\ts|^p |u|^2  \qquad \mbox{ and } \qquad 
  \sup_{n \in \mathbb{Z}} \, \big| \mathcal{H}(S_\ts^n u) - \mathcal{H}( u)  \big| \leq C |\ts|^p |u|^2, 
\end{equation}
where $\mathcal{M}( u) = |u|^2 $ and $\mathcal{H}( u) = \overline{u}^T H u $.
\end{corollary}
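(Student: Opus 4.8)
The plan is to deduce everything from the factorization $S_\ts = P_\ts\,\e^{i\ts D_\ts}\,P_\ts^{-1}$ provided by Theorem \ref{thm:unitary}. The first observation I would record is that, $D_\ts$ being a \emph{real} diagonal matrix, the matrix $\e^{in\ts D_\ts}$ is unitary for every $n\in\Z$, so that
$$
\| S_\ts^n \| = \| P_\ts \, \e^{in\ts D_\ts} \, P_\ts^{-1} \| \leq \| P_\ts\| \, \| P_\ts^{-1}\| = \kappa(P_\ts).
$$
Since $P_\ts = P_0 + \mathcal{O}(\ts^p)$ with $P_0$ invertible, the condition number $\kappa(P_\ts)$ stays bounded for $|\ts|$ small, whence a uniform bound $|S_\ts^n u| \leq C|u|$ valid for all $n\in\Z$. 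Next I would introduce the numerical spectral projectors $Q_j^\ts := P_\ts E_{jj} P_\ts^{-1}$, where $E_{jj}$ is the elementary diagonal matrix. Differentiating the consistency relation $S_\ts = \e^{i\ts H} + \mathcal{O}(\ts^{p+1})$ at $\ts=0$ and comparing with the factorization gives $H = P_0 D_0 P_0^{-1}$; since the eigenvalues of $H$ are simple this identifies $D_0$ with the diagonal matrix of the $\omega\in\sigma(H)$ and yields $\Pi_\omega = P_0 E_{jj} P_0^{-1}$ for the appropriate index $j$. As matrix inversion is smooth near $P_0$, one has $P_\ts^{-1} = P_0^{-1} + \mathcal{O}(\ts^p)$ and therefore $Q_j^\ts = \Pi_\omega + \mathcal{O}(\ts^p)$.

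The crux of the argument is that the numerical projectors are \emph{exactly} preserved in norm along the flow. Indeed, because $\e^{in\ts D_\ts}$ is diagonal it commutes with $E_{jj}$, and using $P_\ts^{-1}P_\ts = I$ inside the product one finds
$$
Q_j^\ts S_\ts^n u = P_\ts E_{jj} \, \e^{in\ts D_\ts} \, P_\ts^{-1} u = \e^{in\ts d_j(\ts)} \, P_\ts E_{jj} P_\ts^{-1} u = \e^{in\ts d_j(\ts)} \, Q_j^\ts u,
$$
where $d_j(\ts)\in\R$ is the $j$-th diagonal entry of $D_\ts$. The reality of $d_j(\ts)$ gives $|Q_j^\ts S_\ts^n u| = |Q_j^\ts u|$ for every $n\in\Z$. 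To obtain \eqref{eq:cor_est_eigs} I would then split $\Pi_\omega = Q_j^\ts + (\Pi_\omega - Q_j^\ts)$ and use the triangle inequality together with the two previous facts:
$$
\Big| |\Pi_\omega S_\ts^n u| - |\Pi_\omega u| \Big| \leq \big| (\Pi_\omega - Q_j^\ts) S_\ts^n u \big| + \big| |Q_j^\ts S_\ts^n u| - |Q_j^\ts u| \big| + \big| (\Pi_\omega - Q_j^\ts) u \big|.
$$
The middle term vanishes, while the two outer terms are bounded by $\| \Pi_\omega - Q_j^\ts\| \, (|S_\ts^n u| + |u|) \leq C\ts^p |u|$, using $Q_j^\ts = \Pi_\omega + \mathcal{O}(\ts^p)$ and the uniform bound $|S_\ts^n u|\leq C|u|$. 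This holds uniformly in $n$, which is exactly \eqref{eq:cor_est_eigs}.

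For the conservation of mass and energy when $H$ is Hermitian, I would exploit that the projectors $\{\Pi_\omega\}_{\omega\in\sigma(H)}$ are then orthogonal, mutually annihilating and sum to the identity. Hence for any $w\in\C^N$ one has the Pythagorean identities $\mathcal{M}(w) = |w|^2 = \sum_\omega |\Pi_\omega w|^2$ and $\mathcal{H}(w) = \overline{w}^T H w = \sum_\omega \omega \, |\Pi_\omega w|^2$. Applying these to $w = S_\ts^n u$ and $w = u$ and subtracting, each summand is controlled through $|a^2 - b^2| = |a-b|\,(a+b)$ with $a = |\Pi_\omega S_\ts^n u|$ and $b = |\Pi_\omega u|$: the factor $|a-b|$ is $\leq C\ts^p|u|$ by \eqref{eq:cor_est_eigs}, while $a+b \leq |S_\ts^n u| + |u| \leq C|u|$. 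Summing over the finitely many eigenvalues (and using $|\omega|\leq \|H\|$ for the energy) gives the two bounds in \eqref{eq:cor_mass_energ}, uniformly in $n\in\Z$.

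The argument is essentially a bookkeeping consequence of Theorem \ref{thm:unitary}, so I do not anticipate a genuine obstacle; the one point deserving care is the uniform-in-$n$ control. This is precisely where the reality of $D_\ts$ is indispensable: it simultaneously places the eigenvalues of $S_\ts$ exactly on $\mathbb{U}$ (so that $\e^{in\ts D_\ts}$ never grows) and makes the numerical spectral components norm-invariant. Without the conclusion of the theorem, i.e. for a merely consistent scheme, both $\|S_\ts^n\|$ and $|Q_j^\ts S_\ts^n u|$ could drift with $n$ and the estimates would hold only on Ehrenfest-type time scales rather than for all times.
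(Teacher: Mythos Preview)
Your proof is correct and follows precisely the natural route: use the factorization from Theorem~\ref{thm:unitary} to build $h$-dependent spectral projectors $Q_j^\ts$ that are exactly norm-preserved by $S_\ts^n$ (thanks to the reality of $D_\ts$), compare them to the true projectors $\Pi_\omega$ via $P_\ts = P_0 + \mathcal{O}(\ts^p)$, and deduce the mass and energy bounds from the Pythagorean decomposition when $H$ is Hermitian. The paper itself does not spell out the argument but simply refers to Corollary~3.2 of \cite{bernier23scs}, whose proof is exactly this one; so your approach coincides with the intended one.
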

{The proof of this result is indeed identical to  Corollary 3.2. of \cite{bernier23scs}, so there is no need to repeat it here. The main
arguments are that (i) $S_h^n = P_0 \e^{i n h D_h} P_0^{-1} + \mathcal{O}(h^p)$, where the the implicit constant in the $\mathcal{O}$ terms
does not depend on $n$  and (ii) the explicit form for the spectral projectors associated with $H$ (due to the fact that $H$ has
 simple eigenvalues).}

\

\begin{proof}[Proof of Theorem \ref{thm:unitary}]
\noindent $\bullet$ \emph{Step 1 : Backward error analysis and spectrum.}
 Provided that $|\ts|$ is small enough, we define (in virtue of holomorphic functional calculus)
\begin{equation} \label{Hh}
H_{\ts} := (i \ts)^{-1} \log S_\ts .
\end{equation}
It follows that
$$
S_{\ts} = \e^{i\ts H_{\ts} }.
$$
By the consistency assumption, we deduce that $H_{\ts}$ depends smoothly on $\ts$ and that
\begin{equation}
\label{eq:consis}
 H_\ts = H+ \mathcal{O}(\ts^{p}). 
\end{equation}
Then, thanks to the usual properties of the homolomorphic functional calculus, we have (for $\ts \neq 0$)
$$
\sigma (H_{\ts}) = (i \ts)^{-1} \log \sigma(S_\ts).
$$
Since, by assumption, $\sigma(S_\ts) =\overline{\mathrm{inv}(\sigma(S_\ts))}$, it follows that the spectrum of $\sigma (H_{\ts})$ is invariant by complex conjugation:
\begin{equation}
\label{eq:spectrum_conj}
\overline{\sigma (H_{\ts})} = -  (i \ts)^{-1} \overline{\log \sigma(S_\ts)} = (i \ts)^{-1} \log(\overline{\mathrm{inv}(\sigma(S_\ts))}) = (i \ts)^{-1} \log \sigma(S_\ts) = \sigma (H_{\ts}).
\end{equation}

\noindent $\bullet$ \emph{Step 2 : Normal form.}
Since by hypothesis the eigenvalues of $H$ are real and simple, $H$ is diagonalizable and so,
without loss of generality, we assume that it is diagonal, that is, 
\[
H = \begin{pmatrix} \omega_1 \\ & \ddots \\ & & \omega_N \end{pmatrix}, 
\quad \mbox{ where} \  \omega_1 <\cdots < \omega_N \mbox{ denote the eigenvalues of $H$}.
\]
Taking into account the consistency estimate \eqref{eq:consis}, we apply Lemma \ref{lem:diag_cont} to $H$, so that 
we get a family $\chi_\ts$ of complex matrices depending smoothly on $\ts$, such that for sufficiently small $|\ts|$,
$$
[P_\ts H_\ts  P_{\ts}^{-1} , H ] =0 \qquad \mbox{ where } \qquad P_\ts := \e^{\ts^p\chi_\ts}.
$$
Since $H$ is diagonal and its eigenvalues are simple, it just means that $P_\ts H_\ts  P_{\ts}^{-1}$ is also diagonal. Therefore, we set
$$
D_\ts :=   \begin{pmatrix} \omega_{1,h} \\ & \ddots \\ & & \omega_{N,h} \end{pmatrix} := P_\ts H_\ts  P_{\ts}^{-1}.
$$ 
By construction, the complex numbers $\omega_{j,h}$ depend smoothly on $h$ and by consistency we have
$$
 \omega_{j,h} = \omega_j + \mathcal{O}(h^p).
$$
To conclude the proof, we just have to prove that the numbers $ \omega_{j,h}$ are real.

First, we note that $ \omega_{j,h} $ are the eigenvalues of $H_{\ts}$. Therefore, since the spectrum of $H_\ts$ is invariant by complex conjugation (see \eqref{eq:spectrum_conj}), it is true that
$$
\{ \omega_{j,h} \ | \ 1\leq j\leq N\} = \{ \overline{\omega_{j,h}} \ | \ 1\leq j\leq N\},
$$
but since the eigenvalues of $H$ are real (by assumption), then
$$
 \overline{\omega_{j,h}} = \omega_j + \mathcal{O}(h^p).
$$
If follows that if $h$ is small enough to ensure that
$$
|h|^p \lesssim \min_{k\neq \ell} |\omega_k - \omega_\ell|
$$
then for all $j\in \llbracket 1,N\rrbracket$, we have
$$
\omega_{j,h}  = \overline{\omega_{j,h}},
$$
i.e. they are real numbers.
\end{proof}

Theorem \ref{thm:unitary} thus explains the good behavior of alternating-conjugate splitting methods displayed in Figures \ref{figure.1} and \ref{figure.2},
when the eigenvalues of $H$ are real and simple: for sufficiently small $|h|$, they are conjugate to unitary maps. However, it does not account for the same behavior
observed in practice when $H$ has repeated eigenvalues (Figure \ref{figure.3} right). This problem is dealt with in the Appendix and is related with the
structure of the operator $H_h$ introduced in \eqref{Hh}.

\subsection{Hermitian flows}

In the case of an alternating-conjugate splitting method $S_h$ approximating the flow generated by a Hermitian matrix, the following theorem allows one to
conclude that typically\footnote{{Here again, the property is valid for almost all Hermitian matrices $H$, since the eigenvalues of almost all Hermitian matrices
are simple and this set of matrices is the complementary set of an algebraic submanifold.}}, $S_\ts$ is similar to a real diagonal matrix (that is, in particular, a Hermitian matrix). We recall that the necessary assumption on the spectrum, $\sigma(S_\ts) = \overline{\sigma(S_\ts)}$, is always satisfied if the matrices 
$A_j$ are real symmetric, whereas if they are Hermitian, $\Psi_h$ has to be symmetric-conjugate (see \eqref{eq:sym_conj}).

\begin{theorem}
\label{thm:hermitian}
Let  $H \in \mathbb{C}^{N\times N}$ be a complex matrix and let $S_\ts \in \mathbb{C}^{N\times N}$ be a family of complex matrices depending smoothly on $\ts\in \mathbb{R}$ such that
\begin{itemize}
\item the spectrum of $S_h$ satisfies $\sigma(S_\ts) = \overline{\sigma(S_\ts)}$,
\item $S_\ts$ is consistent with $\exp(\ts H)$, i.e. there exists $p\geq 1$ such that
$
\displaystyle S_\ts \mathop{=}_{\ts\to 0} \e^{\ts H} + \mathcal{O}(\ts^{p+1}),
$
\item the eigenvalues of $H$ are real and simple.
\end{itemize}
Then there exist 
\begin{itemize}
\item $D_\ts$,  a family of real diagonal matrices depending smoothly on $\ts$,
\item $P_\ts$, a family of complex invertible matrices depending smoothly on $\ts$,
\end{itemize}
such that $P_\ts = P_0 + \mathcal{O}(\ts^p)$, $D_\ts = D_0 + \mathcal{O}(\ts^p)$  and, provided that $|\ts|$ is small enough, 
\begin{equation}
\label{eq:what_we_want_herm}
S_\ts = P_\ts \, \e^{\ts D_\ts} \, P_\ts^{-1}.
\end{equation}
\end{theorem}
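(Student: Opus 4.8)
The plan is to follow the proof of Theorem~\ref{thm:unitary} almost verbatim, replacing the unitary symmetry $z\mapsto\overline{z}^{-1}$ (and the prefactor $i\ts$) by the Hermitian symmetry $z\mapsto\overline{z}$ (and the prefactor $\ts$). First, a backward error analysis: since $S_\ts = \e^{\ts H}+\mathcal{O}(\ts^{p+1})$ and $\e^{\ts H}$ is invertible, $S_\ts$ is invertible for $|\ts|$ small, and its spectrum lies near $\sigma(\e^{\ts H})=\{\e^{\ts\omega_j}\}$, hence near the positive real axis and away from the branch cut $(-\infty,0]$ of the principal logarithm. I would therefore define, by holomorphic functional calculus, $H_\ts := \ts^{-1}\log S_\ts$, which depends smoothly on $\ts$, satisfies $S_\ts=\e^{\ts H_\ts}$ and, by consistency, $H_\ts = H+\mathcal{O}(\ts^p)$. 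Combining the spectral mapping property $\sigma(H_\ts)=\ts^{-1}\log\sigma(S_\ts)$ with the hypothesis $\sigma(S_\ts)=\overline{\sigma(S_\ts)}$ and the identity $\overline{\log z}=\log\overline{z}$ (valid off $(-\infty,0]$) gives
$$
\overline{\sigma(H_\ts)}=\ts^{-1}\overline{\log\sigma(S_\ts)}=\ts^{-1}\log\overline{\sigma(S_\ts)}=\ts^{-1}\log\sigma(S_\ts)=\sigma(H_\ts),
$$
so the spectrum of $H_\ts$ is invariant under complex conjugation.

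Next comes the normal form step. Since the eigenvalues of $H$ are real and simple, $H$ is diagonalizable; conjugating, I may assume $H=\mathrm{diag}(\omega_1,\dots,\omega_N)$ with $\omega_1<\dots<\omega_N$ (the conjugation being absorbed into $P_0$). Applying Lemma~\ref{lem:diag_cont} with $\mathfrak{g}=\mathfrak{gl}_N(\mathbb{C})$, $M_\ts=H_\ts$ and $M_0=H$ yields a smooth family $\chi_\ts$ such that, for $P_\ts:=\e^{\ts^p\chi_\ts}$, the matrix $D_\ts:=P_\ts H_\ts P_\ts^{-1}$ commutes with $H$; as $H$ is diagonal with simple spectrum, $D_\ts$ is diagonal, $D_\ts=\mathrm{diag}(\omega_{1,\ts},\dots,\omega_{N,\ts})$, with $\omega_{j,\ts}=\omega_j+\mathcal{O}(\ts^p)$ and $P_\ts=I+\mathcal{O}(\ts^p)$. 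Hence $S_\ts=\e^{\ts H_\ts}=P_\ts^{-1}\e^{\ts D_\ts}P_\ts$, which is \eqref{eq:what_we_want_herm} after renaming $P_\ts$ into $P_\ts^{-1}$. To finish, I would check that the $\omega_{j,\ts}$ are real: they are the eigenvalues of $H_\ts$, so by the previous step the multiset $\{\omega_{j,\ts}\}$ equals $\{\overline{\omega_{j,\ts}}\}$; since $\overline{\omega_{j,\ts}}=\omega_j+\mathcal{O}(\ts^p)$ and the reals $\omega_j$ are pairwise distinct, as soon as $|\ts|^p$ is small compared to $\min_{k\neq\ell}|\omega_k-\omega_\ell|$ the only bijection matching these two multisets is the identity, forcing $\omega_{j,\ts}=\overline{\omega_{j,\ts}}$ for all $j$.

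I do not expect a genuine obstacle here, as the argument is structurally identical to that of Theorem~\ref{thm:unitary}. The only points deserving care are the invertibility of $S_\ts$ for small $|\ts|$ (immediate from consistency, so that $\log S_\ts$ makes sense) and the choice of branch of the logarithm: here the spectrum of $S_\ts$ clusters near the positive real axis rather than near the unit circle, which is exactly what makes $\overline{\log z}=\log\overline{z}$ legitimate and produces the $z\mapsto\overline{z}$ symmetry of $\sigma(H_\ts)$ rather than the $z\mapsto\overline{z}^{-1}$ one of Theorem~\ref{thm:unitary}.
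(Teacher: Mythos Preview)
Your proposal is correct and follows exactly the approach the paper intends: the paper's own proof of Theorem~\ref{thm:hermitian} simply states that, up to multiplicative factors of $i$, the argument is identical to that of Theorem~\ref{thm:unitary} and is therefore omitted. You have carried out precisely this adaptation, including the appropriate replacement of the spectral symmetry and the logarithm prefactor.
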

\begin{proof} Up to some multiplicative factors $i$, the proof is exactly the same as the one of Theorem \ref{thm:unitary}, and so we omit it.
\end{proof}

\section{Linear Hamiltonian systems} 
\label{sec.4}

Let us analyze now how alternating-conjugate methods approximate the dynamics of the important class of linear Hamiltonian systems.
In canonical real variables, a real (resp. complex) linear Hamiltonian system is given by a linear evolution equation of the form
\begin{equation} \label{Ham.1}
 \dot{y} = J H y,
\end{equation}
where $y\in \mathbb{R}^{2N}$, $J \in \mathbb{R}^{2N \times 2N}$ is the matrix of the canonical symplectic form on $ \mathbb{R}^{2N}$,
$$
J = \begin{pmatrix}  O_N & I_N \\ -I_N & O_N
\end{pmatrix}
$$
and $H$ is a real (resp. complex) symmetric $2N\times 2N$ matrix. As is well known, the corresponding Hamiltonian function $\mathcal{H}(y)$
is a constant of the motion, i.e., 
\begin{equation}
\label{eq:def_hamil}
\forall \, y\in \mathbb{R}^{2N}, \ \ \forall \, t\in \mathbb{R}, \qquad \mathcal{H}( \e^{tJH} y) = \mathcal{H}(y), \quad \mbox{ where } \quad 
\mathcal{H}(y) = \frac{1}{2} \, y^T H y,
\end{equation}
and the flow is symplectic, 
$$
(\e^{tJH})^T J \, \e^{tJH} = J, \quad \forall \, t\in \mathbb{R}.
$$
Notice that these two properties can be extended by analyticity to all $t\in \mathbb{C}$, and that if $P$ is a symplectic change of variables, then
\begin{equation}
\label{eq:sympl_inv}
P^{-1} J H P = J P^T H P.
\end{equation}
The set of all real (resp. complex) symplectic matrices form a matrix Lie group, the real or complex symplectic group $\mathrm{Sp}_{2N}$. Moreover, it can be shown that, in a neighborhood of the identity, any linear real (resp. complex) symplectic map is given by the flow of a real (resp. complex) linear
Hamiltonian system. 

It turns out that splitting methods are particularly well suited to discretize such systems. 
More specifically, let us consider a classical splitting method applied to a real linear Hamiltonian system, i.e.
$$
S_h = \e^{\beta_1 h J H_1 } \, \cdots \, \e^{\beta_k h J H_k }  = \e^{h J H} +\mathcal{O}(h^2),
$$
where $\beta_1,\cdots, \beta_k \in \mathbb{R}$ are real numbers and $H,H_1,\cdots,H_k$ are real symmetric matrices. By using backward error analysis 
techniques, it can be shown that there exits a family of real symmetric matrices $H_h$ depending smoothly on $h$ such that if $|h|$ is sufficiently small, one has
\begin{equation}
\label{eq:consist_ham}
S_h = \e^{hJH_h}, \qquad \mbox{ with } \qquad H_h = H + \mathcal{O}(h).
\end{equation}
It follows that $H_h$ is a constant of the motion of the numerical flow. Furthermore, if $H$ is positive definite then $H_h$ is also so (for $|h|$ small enough), the numerical flow is globally bounded and the Hamiltonian $\mathcal{H}$ (defined in \eqref{eq:def_hamil}) is almost globally preserved for all times, i.e.
$$
\forall \, y\in \mathbb{R}^{2N}, \quad \sup_{n\in \mathbb{Z}}| S_h^n y | \lesssim |y| \qquad \mbox{ and } \qquad \sup_{n\in \mathbb{Z}} \big|\mathcal{H}(S_h^n y) - \mathcal{H}(y)    \big| \lesssim |h| |y|^2.
$$

\medskip

If instead of a classical splitting method (i.e., with real coefficients), we consider a splitting method with complex coefficients, $\beta_1,\cdots, \beta_k \in \mathbb{C}$, then the numerical flow is still symplectic, so that it can be seen as a Hamiltonian flow which is consistent with the original one (i.e.  \eqref{eq:consist_ham} holds) but now the modified $H_h$ is a complex matrix. As a consequence, if $H$ is positive definite, the preservation of $H_h$ by the numerical flow does not ensure that it remains bounded.

At this point it is worth describing more precisely the dynamics of the class of linear Hamiltonian systems. As is well known (see e.g. \cite{hormander95sco}), 
if $H$ is definite positive then, up to a symplectic change of variable, the corresponding Hamiltonian flow is a symplectic rotation or, equivalently, the associated Hamiltonian system is completely integrable: there exists a real symplectic matrix $P$ and some frequencies $\omega_1,\ldots,\omega_N >0$ such that
\begin{equation}
\label{eq:conj_int}
 H  = P^T D_\omega \, P \quad \mbox{ where } \quad D_\omega:=\begin{pmatrix} \Delta_\omega \\ & \Delta_\omega \end{pmatrix} \quad \mbox{ and } \quad \Delta_\omega := \begin{pmatrix} \omega_1 \\ & \ddots \\ & & \omega_N  \end{pmatrix}
\end{equation}
and thus
$$
\e^{t JH} = \e^{t J  P^T D_\omega P } \mathop{=}^{\eqref{eq:sympl_inv}} P^{-1} \e^{t J   D_\omega  }P   =P^{-1}\begin{pmatrix} \cos(t \Delta_\omega) & \sin(t \Delta_\omega) \\ - \sin(t \Delta_\omega) & \cos(t \Delta_\omega) \end{pmatrix} P.
$$
More generally, if a Hamiltonian flow is similar to a symplectic rotation (the frequencies $\omega_j$ do not necessarily have to be positive) then it is globally bounded even if the corresponding matrix $H$ is not positive definite. In fact, linear Hamiltonian systems that are completely integrable have a great deal of
 constant of the motions, called actions, which control all the dynamics. Specifically, for all $ t \in \mathbb{R}$, all $y = (q,p) \in \mathbb{R}^{2N}$ and all $ j\in \llbracket 1,N \rrbracket$,
\begin{equation}
\label{eq:def_actions}
 \mathcal{I}_j (\e^{tJH} y) = \mathcal{I}_j (y),  \qquad \mbox{ where } \quad \mathcal{I}_j(y) := \mathcal{J}_j(Py), \quad \mbox{ and } \quad 
 \mathcal{J}_j(p,q) := \frac{1}{2}(p_j^2+q_j^2) .
\end{equation}

\medskip

What happens when an alternating-conjugate splitting method $S_h$ is applied to a real linear Hamiltonian system? Before addressing this question from a theoretical
point of view, we carry out a numerical experiment with a system of type \eqref{eq:conj_int}. More specifically, we take $N=3$, generate a real symmetric
matrix $H$ with three different frequencies $\omega_1, \omega_2, \omega_3 > 0$, split it into two parts $H= A + B$, 
and check how the imaginary part of the numerical solution obtained with
schemes \eqref{scheme3}, \eqref{schemes4} and \eqref{ac.1} evolves with time. In doing so, we have to make the obvious replacements of $i A$ and $i B$ 
by $J A$ and $J B$, respectively, in the previous methods. The results are displayed in Figure \ref{figure.4}.

\begin{figure}[!ht] 
\centering
  \includegraphics[width=.49\textwidth]{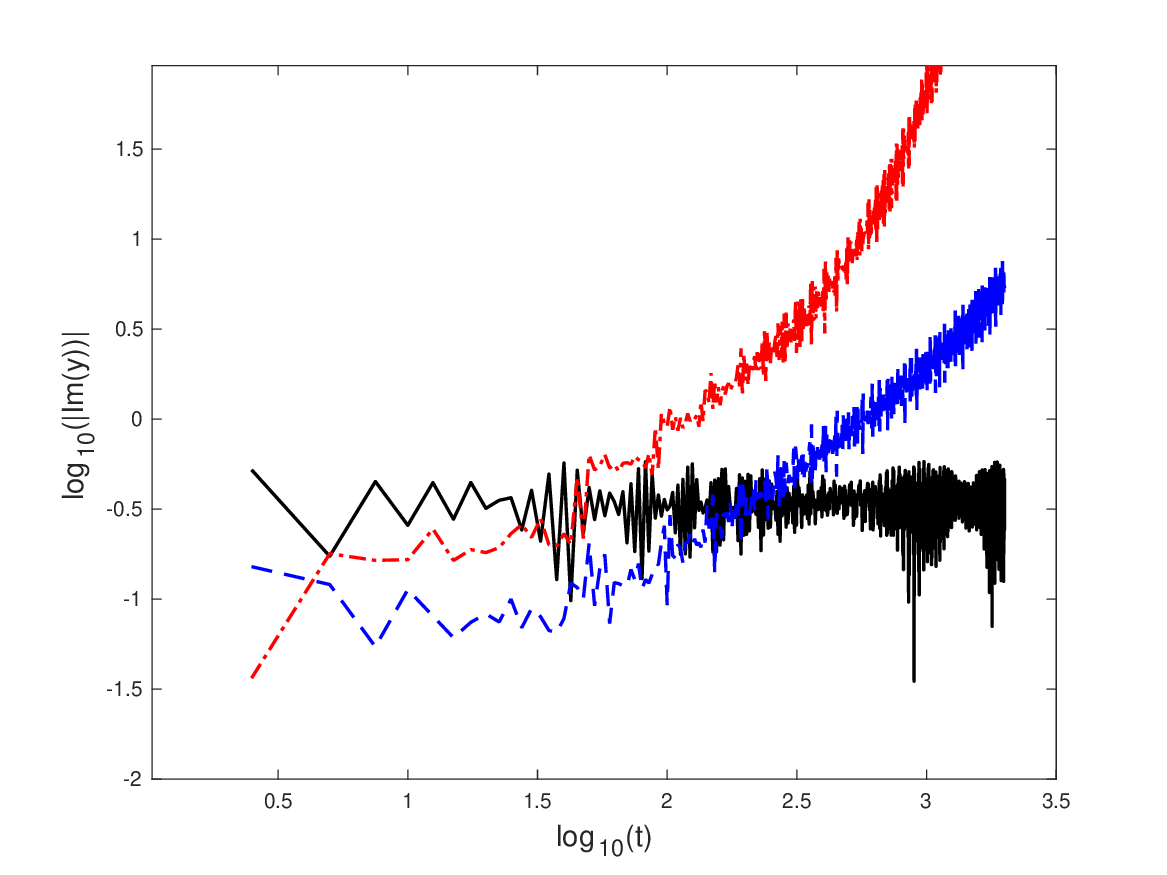}
  \includegraphics[width=.49\textwidth]{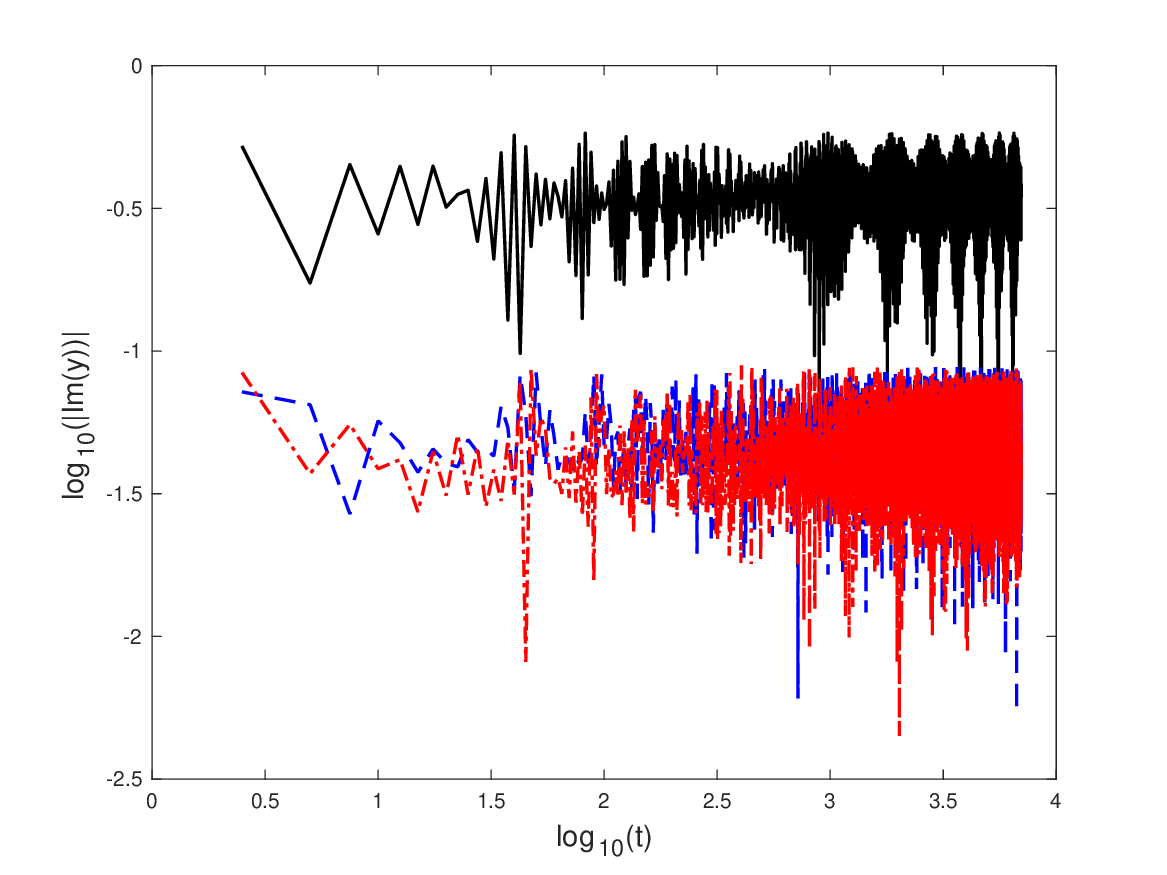}
\caption{\label{figure.4} \small Imaginary part of the numerical solution as a function of time for a linear Hamiltonian system with $N=3$ and step size $h=2.5$. 
Left panel: schemes
$\Psi_{h}^{[3]}$ (black {solid}), $\Psi_{h,sc}^{[4]}$ (blue {dashed}) and
$\Psi_{h,p}^{[4]}$ (red {dash-dotted}). Right panel: results achieved by the alternating-conjugate version \eqref{ac.1} on the same problem. Only the numerical solution
obtained by alternating-conjugate methods remains almost real for very long times.}
\end{figure}

We clearly see that both palindromic and symmetric-conjugate methods introduce an imaginary part into the numerical solution that increases with time, whereas for
the corresponding alternating-conjugate this imaginary part remains small for very long times, and this takes place even for not so small values of the step size $h$.

\begin{figure}[!ht] 
\centering
  \includegraphics[width=.49\textwidth]{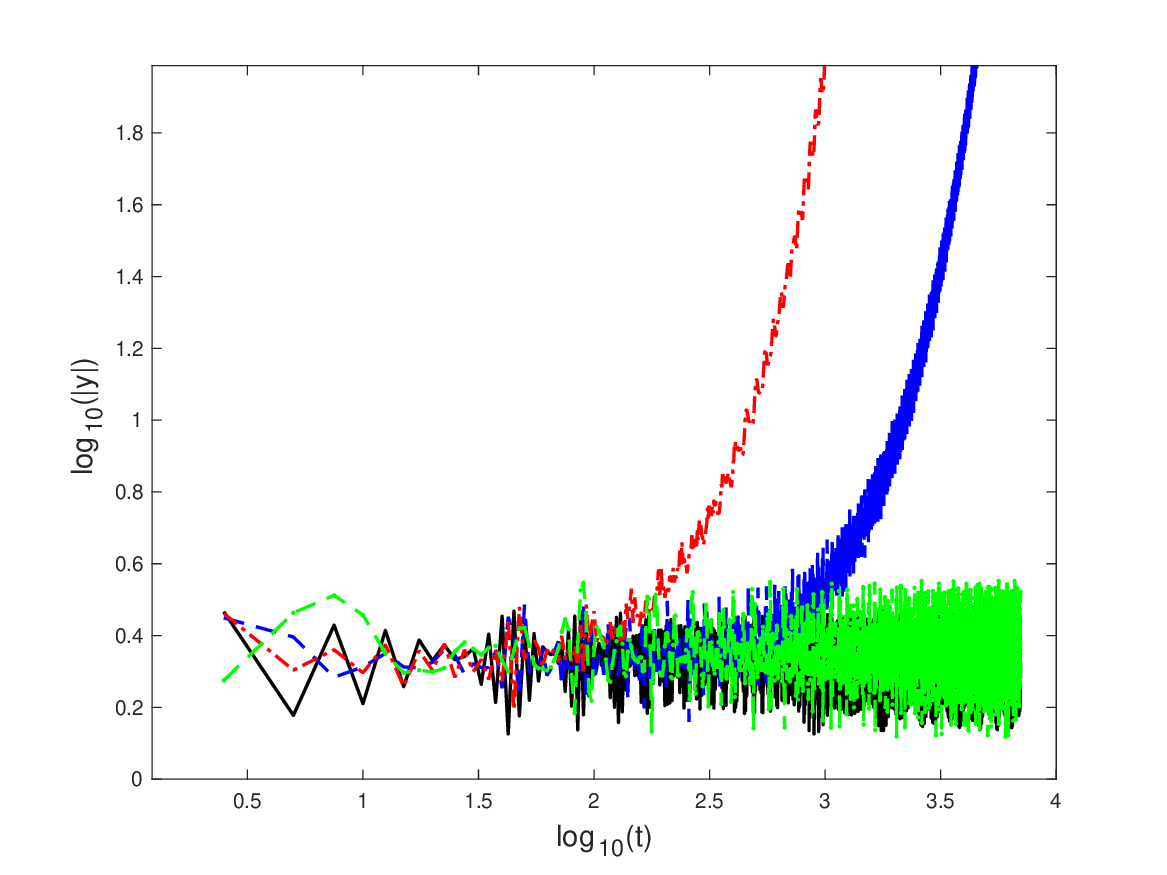}
  \includegraphics[width=.49\textwidth]{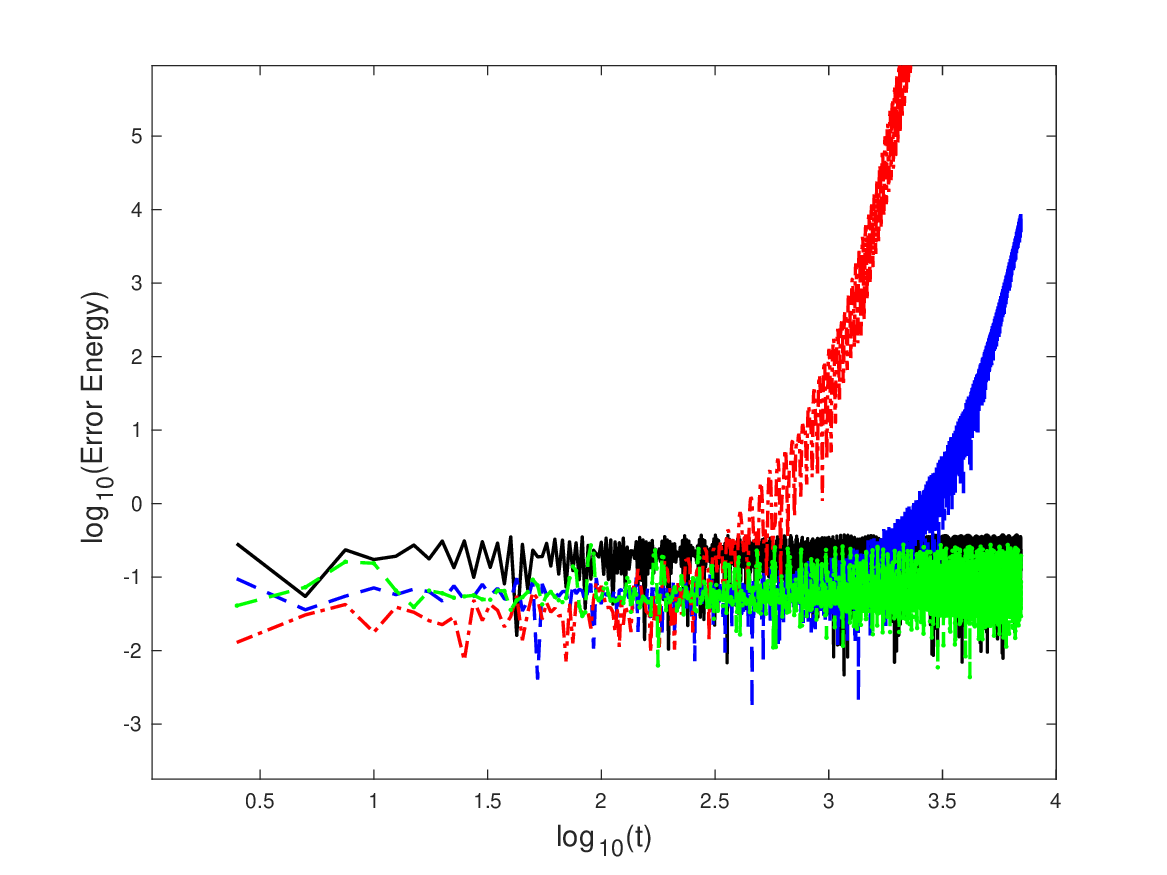}
\caption{\label{figure.5} \small Norm of the numerical solution (left) and error in energy (right) as a function of time for a linear Hamiltonian system with $N=3$ 
and step size $h=2.5$ for the following schemes: 
$\Psi_{h}^{[3]}$ (black {solid}), $\Psi_{h,sc}^{[4]}$ (blue {dashed}), 
$\Psi_{h,p}^{[4]}$ (red {dash-dotted}) and the new 4th-order alternating-conjugate method \eqref{nac4} (green {dashed}).  Only alternating-conjugate schemes provide a correct
description of the dynamics.}
\end{figure}

We can also check how the norm $|y|$ of the numerical solution of \eqref{Ham.1} and the error in the energy $\mathcal{H}(y)$ obtained by the different 
schemes evolve with time. The corresponding results are depicted in Figure \ref{figure.5} for methods $\Psi_{h}^{[3]}$ (black {solid}), $\Psi_{h,sc}^{[4]}$ (blue {dashed}),
$\Psi_{h,p}^{[4]}$ (red {dash-dotted}) and a new alternating-conjugate method of order 4 \eqref{nac4} 
presented in section \ref{sec.5} (green) which is \emph{not} obtained by concatenating
palindromic or symmetric-conjugate schemes. Notice that only alternating-conjugate methods provide a correct qualitative description of the dynamics.
The same behavior is observed if one considers instead the actions \eqref{eq:def_actions}.

As in the case of the unitary and Hermitian flows analyzed before, the spectral properties of alternating-conjugate maps play a fundamental role in explaining these
results. Specifically, if the system is completely
integrable, then the fact that $\sigma(S_\ts) = \overline{\sigma(S_\ts)}$ guarantees that the numerical flow is conjugated to a symplectic rotation 
and so it is globally bounded. The precise statement is the following:

\begin{theorem}
\label{thm:simple_sympl}
Let  $H \in \mathbb{R}^{2N\times 2N}$ be a real symmetric matrix and let $S_\ts \in \mathbb{C}^{2N\times 2N}$ be a family of complex symplectic matrices depending smoothly on $\ts\in \mathbb{R}$ such that
\begin{itemize}
\item the spectrum of $S_h$ satisfies $\sigma(S_\ts) = \overline{\sigma(S_\ts)}$,
\item $S_\ts$ is consistent with $\exp(\ts J H)$, i.e. there exists $p\geq 1$ such that
$
\displaystyle S_\ts \mathop{=}_{\ts\to 0} \e^{\ts J H} + \mathcal{O}(\ts^{p+1}),
$
\item the Hamiltonian system associated with $H$ is completely integrable, i.e. there exists a real symplectic linear matrix $P  \in \mathbb{R}^{2N \times 2N}$ and some real frequencies $\omega_1,\cdots,\omega_N$ such that $H  = P^T D_\omega P$ (see \eqref{eq:conj_int}),
\item the frequencies $\omega$ are {\sl non resonant}, in the sense that 
the numbers $\omega_1, \ldots, \omega_N, -\omega_1, \ldots, -\omega_N$ are all distinct.
\end{itemize}
Then there exist 
\begin{itemize}
\item $\omega_\ts$,  a vector of real frequencies depending smoothly on $\ts$,
\item $P_\ts$, a family of complex symplectic matrices depending smoothly on $\ts$,
\end{itemize}
such that $P_\ts = P + \mathcal{O}(\ts^p)$, $\omega_\ts = \omega + \mathcal{O}(\ts^p)$  and, provided that $|\ts|$ is small enough, 
\begin{equation}
\label{eq:what_we_want_symp}
S_\ts = P_\ts^{-1} \, \e^{\ts J D_{\omega_\ts}} \, P_\ts.
\end{equation}
\end{theorem}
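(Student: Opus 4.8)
The plan is to transpose the proof of Theorem~\ref{thm:unitary} to the symplectic setting, working throughout in the complex symplectic Lie algebra $\mathfrak{g} = \mathfrak{sp}_{2N}(\mathbb{C}) = \{M\in\mathbb{C}^{2N\times 2N} : JM = (JM)^{T}\}$. Since $p\geq 1$ one has $S_\ts = I + \mathcal{O}(\ts)$, so for $|\ts|$ small $\log S_\ts$ is well defined, and it lies in $\mathfrak{sp}_{2N}(\mathbb{C})$ because $S_\ts$ is symplectic. Setting $H_\ts := (\ts J)^{-1}\log S_\ts$, one obtains --- by the backward error analysis of Step~1 in the proof of Theorem~\ref{thm:unitary} --- a family of \emph{complex symmetric} matrices depending smoothly on $\ts$, with
\[
S_\ts = \e^{\ts J H_\ts}, \qquad H_\ts = H + \mathcal{O}(\ts^{p}),
\]
so that $M_\ts := JH_\ts$ belongs to $\mathfrak{sp}_{2N}(\mathbb{C})$ and $M_\ts = M_0 + \mathcal{O}(\ts^{p})$ with $M_0 := JH$.

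Next I would analyze $\sigma(M_\ts)$. Being in $\mathfrak{sp}_{2N}(\mathbb{C})$, $M_\ts$ satisfies $J M_\ts J^{-1} = -M_\ts^{T}$, so $\sigma(M_\ts)$ is invariant under $\lambda\mapsto -\lambda$; and, reasoning as for \eqref{eq:spectrum_conj}, the hypothesis $\sigma(S_\ts)=\overline{\sigma(S_\ts)}$ together with $S_\ts = \e^{\ts J H_\ts}$ and the holomorphic functional calculus give $\overline{\sigma(M_\ts)} = \sigma(M_\ts)$. By complete integrability and \eqref{eq:sympl_inv} one has $M_0 = JH = P^{-1}(JD_\omega)P$, whose eigenvalues are $\pm i\omega_1,\ldots,\pm i\omega_N$, and the non-resonance assumption says precisely that these $2N$ numbers are distinct. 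Hence for $|\ts|$ small $\sigma(M_\ts)$ consists of $2N$ simple eigenvalues, each $\mathcal{O}(\ts^{p})$-close to one of the $\pm i\omega_j$; labelling $\lambda_{j,\ts}\to i\omega_j$, the two symmetries force $-\lambda_{j,\ts}$ and $\overline{\lambda_{j,\ts}}$ to be the eigenvalue near $-i\omega_j$, hence equal, so $\mathrm{Re}\,\lambda_{j,\ts}=0$ and $\sigma(M_\ts)\subset i\mathbb{R}$, with the eigenvalues depending smoothly on $\ts$.

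Since $M_0$ is diagonalizable on $\mathbb{C}$ (simple spectrum) and $M_\ts = M_0 + \mathcal{O}(\ts^{p})$, Lemma~\ref{lem:diag_cont} applied in $\mathfrak{g}=\mathfrak{sp}_{2N}(\mathbb{C})$ produces a smooth family $\chi_\ts\in\mathfrak{sp}_{2N}(\mathbb{C})$ such that, for $|\ts|$ small, $Q_\ts:=\e^{\ts^{p}\chi_\ts}\in\mathrm{Sp}_{2N}(\mathbb{C})$ and $Q_\ts M_\ts Q_\ts^{-1}$ commutes with $M_0$. Conjugating by the real symplectic $P$ and using $PM_0P^{-1}=JD_\omega$, the matrix $\widehat M_\ts := PQ_\ts M_\ts Q_\ts^{-1}P^{-1}\in\mathfrak{sp}_{2N}(\mathbb{C})$ commutes with $JD_\omega$. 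The crux is the algebraic fact that, because the $\omega_j$ are non-resonant (so the $\omega_j^{2}$ are pairwise distinct and nonzero), the centralizer of $JD_\omega$ inside $\mathfrak{sp}_{2N}(\mathbb{C})$ is \emph{exactly} $\{JD_{\omega'}:\omega'\in\mathbb{C}^{N}\}$: writing a commuting matrix in $N\times N$ blocks forces every block to be diagonal, and the constraint $JM=(JM)^{T}$ then kills the two diagonal blocks, leaving precisely the off-diagonal $\pm$ pair. Hence $\widehat M_\ts = JD_{\omega_\ts}$ for a unique $\omega_\ts\in\mathbb{C}^{N}$ depending smoothly on $\ts$; its spectrum $\{\pm i(\omega_\ts)_j\}$ equals $\sigma(M_\ts)\subset i\mathbb{R}$, so $\omega_\ts$ is real, and $\widehat M_\ts = PM_0P^{-1}+\mathcal{O}(\ts^{p}) = JD_\omega + \mathcal{O}(\ts^{p})$ gives $\omega_\ts = \omega + \mathcal{O}(\ts^{p})$.

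Unwinding the conjugations, $M_\ts = Q_\ts^{-1}P^{-1}(JD_{\omega_\ts})PQ_\ts$, so with $P_\ts := PQ_\ts$ --- a product of complex symplectic matrices, hence in $\mathrm{Sp}_{2N}(\mathbb{C})$, with $P_\ts = P\,\e^{\ts^{p}\chi_\ts} = P + \mathcal{O}(\ts^{p})$ --- one gets $S_\ts = \e^{\ts M_\ts} = P_\ts^{-1}\,\e^{\ts J D_{\omega_\ts}}\,P_\ts$, which is \eqref{eq:what_we_want_symp}. Beyond copying Theorem~\ref{thm:unitary}, the work is concentrated in checking that $\log S_\ts$, $H_\ts$ and $\chi_\ts$ really stay in $\mathfrak{sp}_{2N}(\mathbb{C})$ and in the centralizer computation, where the non-resonance hypothesis is essential; I expect the latter, together with the reality argument --- which hinges on the interplay of the symplectic symmetry $\sigma(M_\ts)=-\sigma(M_\ts)$ with $\sigma(S_\ts)=\overline{\sigma(S_\ts)}$ --- to be the only genuinely non-routine parts.
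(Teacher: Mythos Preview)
Your proof is correct and follows the same strategy as the paper: backward error analysis to get $S_\ts=\e^{\ts J H_\ts}$ with $H_\ts$ complex symmetric, Lemma~\ref{lem:diag_cont} in $\mathfrak{g}=\mathfrak{sp}_{2N}(\mathbb{C})$ to conjugate $JH_\ts$ into the commutant of $JH$, and then the spectral symmetry $\sigma(S_\ts)=\overline{\sigma(S_\ts)}$ to force the modified frequencies to be real.

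The one tactical difference is the step you single out as non-routine. You compute the centralizer of $JD_\omega$ inside $\mathfrak{sp}_{2N}(\mathbb{C})$ directly (block-diagonal, then the symmetry of $J M$ kills the diagonal blocks). The paper instead first conjugates by an explicit complex symplectic matrix $L$ that diagonalizes $JD_\omega$ to $i\,\mathrm{diag}(\Delta_\omega,-\Delta_\omega)$; after this the commutant of a diagonal matrix with simple entries is simply the diagonals, and the symplectic constraint pairs the two blocks, so the centralizer computation becomes a one-liner. Correspondingly, the paper applies Lemma~\ref{lem:diag_cont} to $M_\ts=LP(JH_\ts)P^{-1}L^{-1}$ with diagonal $M_0$, while you apply it to $JH_\ts$ with $M_0=JH$ and conjugate afterwards; both are valid. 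Your organization also front-loads the reality argument (combining $\sigma(M_\ts)=-\sigma(M_\ts)$ with $\sigma(M_\ts)=\overline{\sigma(M_\ts)}$), whereas the paper reads off the $\pm$ pairing from the normal form and invokes only the conjugation symmetry at the end; again, these are equivalent routes to the same conclusion.
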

As a corollary, we can show that the dynamics is bounded, the numerical solution remains almost real for all times and the actions and the Hamiltonian
function of the original system are almost preserved. Specifically,
\begin{corollary} 
\label{cor:nice} 
In the setting of Theorem \ref{thm:simple_sympl}, there exists a constant $C>0$ such that, provided that $|\ts|$ is small enough,  for all $y = (q,p) \in \mathbb{R}^{2N}$, 
\begin{itemize}
\item the numerical solution remains globally bounded in time and its imaginary part remains very small for all times
$$
\sup_{n\in \mathbb{Z}} |S_h^n y| \leq C |y| \qquad \mbox{ and } \qquad \sup_{n\in \mathbb{Z}} |\Im (S_h^n y)| \leq C |\ts|^p|y|,
$$
\item the actions and the Hamiltonian of the original system (defined respectively in  \eqref{eq:def_actions} and \eqref{eq:def_hamil} ) are almost preserved for all times
$$
\sup_{n\in \mathbb{Z}} \sup_{1\leq j\leq N} |\mathcal{I}_j (S_h^n y) - \mathcal{I}_j (y)| \leq C |\ts|^p |y|^2 \qquad \mbox{ and } \qquad \sup_{n\in \mathbb{Z}}  |\mathcal{H} (S_h^n y) - \mathcal{H} (y)| \leq C|\ts|^p  |y|^2.
$$
\end{itemize}
\end{corollary}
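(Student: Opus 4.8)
The plan is to deduce every estimate from the conjugacy \eqref{eq:what_we_want_symp} furnished by Theorem \ref{thm:simple_sympl}, which gives, for $|\ts|$ small and all $n\in\mathbb{Z}$,
\[
 S_\ts^n = P_\ts^{-1} \, R_n \, P_\ts, \qquad R_n := \e^{n\ts J D_{\omega_\ts}},
\]
combined with two elementary observations. First, since $\omega_\ts$ is a vector of \emph{real} frequencies and $J$, $D_{\omega_\ts}$ are real, the matrix $J D_{\omega_\ts}$ is real skew-symmetric, so $R_n$ is a real symplectic rotation; in particular, viewed as an operator on $\mathbb{C}^{2N}$ equipped with the Hermitian norm, $\|R_n\| = 1$ \emph{uniformly in $n$}, and $R_n$ acts as a planar rotation of angle $n\ts\,\omega_{\ts,j}$ in each $(q_j,p_j)$-plane — a polynomial identity in $(q_j,p_j)$ which therefore persists over $\mathbb{C}$ and preserves the complex-bilinear extension of the quadratic form $\mathcal{J}_j$ of \eqref{eq:def_actions}, i.e. $\mathcal{J}_j(R_n w) = \mathcal{J}_j(w)$ for all $w\in\mathbb{C}^{2N}$. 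Second, since $P_\ts \to P$ with $P$ invertible, by shrinking the range of $\ts$ we may assume $\|P_\ts\|,\|P_\ts^{-1}\| \le C$ and, using $P_\ts = P + \mathcal{O}(\ts^p)$ and smooth invertibility, $\|P_\ts - P\| + \|P_\ts^{-1} - P^{-1}\| \le C|\ts|^p$.

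Boundedness is then immediate: $|S_\ts^n y| \le \|P_\ts^{-1}\|\,\|R_n\|\,\|P_\ts\|\,|y| \le C|y|$, uniformly in $n$. For the imaginary part, I would compare $S_\ts^n y$ with $P^{-1} R_n P\, y$, which is \emph{real}, being a product of real matrices applied to the real vector $y$; telescoping,
\[
 S_\ts^n y - P^{-1} R_n P\, y = P_\ts^{-1} R_n (P_\ts - P) y + (P_\ts^{-1} - P^{-1}) R_n P\, y,
\]
and since $\|R_n\|=1$ each term is $\mathcal{O}(|\ts|^p|y|)$ uniformly in $n$, whence $|\Im(S_\ts^n y)| \le C|\ts|^p|y|$.

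For the actions, recall $\mathcal{I}_j(z) = \mathcal{J}_j(Pz)$, so $\mathcal{I}_j(S_\ts^n y) = \mathcal{J}_j\big((P P_\ts^{-1}) R_n (P_\ts y)\big)$. Writing $P P_\ts^{-1} = I + \mathcal{O}(\ts^p)$ and using $\|R_n\|=1$, $\|P_\ts\|\le C$, we get $P S_\ts^n y = R_n(P_\ts y) + r_n$ with $|r_n| \le C|\ts|^p|y|$ uniformly in $n$; since $\mathcal{J}_j$ is quadratic and both $P S_\ts^n y$ and $R_n P_\ts y$ have norm $\le C|y|$, this gives $|\mathcal{J}_j(P S_\ts^n y) - \mathcal{J}_j(R_n P_\ts y)| \le C|\ts|^p|y|^2$; by the rotation-invariance above $\mathcal{J}_j(R_n P_\ts y) = \mathcal{J}_j(P_\ts y)$; and $|\mathcal{J}_j(P_\ts y) - \mathcal{J}_j(Py)| \le C|\ts|^p|y|^2$ since $(P_\ts - P)y = \mathcal{O}(|\ts|^p|y|)$. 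Chaining the three bounds yields $\sup_n\sup_j|\mathcal{I}_j(S_\ts^n y) - \mathcal{I}_j(y)| \le C|\ts|^p|y|^2$. Finally, from $H = P^T D_\omega P$ one has $\mathcal{H}(z) = \frac{1}{2} z^T H z = \sum_{j=1}^N \omega_j\,\mathcal{J}_j(Pz) = \sum_{j=1}^N \omega_j\,\mathcal{I}_j(z)$ for every $z\in\mathbb{C}^{2N}$ (again a bilinear identity), so the energy estimate follows from the action estimate with the constant multiplied by $\sum_j|\omega_j|$.

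These are essentially routine perturbation estimates once Theorem \ref{thm:simple_sympl} is available; the only points requiring genuine care are (i) that the numerical iterate lives in $\mathbb{C}^{2N}$ although the continuous system is real, so $\mathcal{J}_j$, $\mathcal{I}_j$ and $\mathcal{H}$ must be read as complex-\emph{bilinear} (not Hermitian) quadratic forms and their $R_n$-invariance invoked as a polynomial identity valid over $\mathbb{C}$, and (ii) that all the uniform-in-$n$ control rests on $\|R_n\| = 1$ on $\mathbb{C}^{2N}$, which uses crucially that $\omega_\ts$ is real — and this is exactly where the spectral symmetry $\sigma(S_\ts) = \overline{\sigma(S_\ts)}$, propagated through Theorem \ref{thm:simple_sympl}, does its work.
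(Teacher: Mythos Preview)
Your proof is correct and follows essentially the same route as the paper's: both derive everything from the conjugacy $S_\ts^n = P_\ts^{-1}\e^{n\ts J D_{\omega_\ts}}P_\ts$ with real $\omega_\ts$, using that the middle factor is a real orthogonal rotation (hence $\|R_n\|=1$ and it preserves each $\mathcal{J}_j$), that $P_\ts - P = \mathcal{O}(\ts^p)$ with $P$ real, and that $\mathcal{H} = \sum_j \omega_j \mathcal{I}_j$. The only cosmetic differences are that the paper bounds $|\Im(S_\ts^n)|$ by expanding $\Im(P_\ts^{-1} R_n P_\ts)$ directly rather than telescoping against $P^{-1}R_nP$, and phrases the action argument via the exactly preserved modified actions $\mathcal{I}_{j,\ts}(y)=\mathcal{J}_j(P_\ts y)$ rather than your three-step chain; your extra care in interpreting $\mathcal{J}_j$, $\mathcal{I}_j$, $\mathcal{H}$ as complex-bilinear forms is a point the paper leaves implicit.
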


We first proceed to prove the corollary.
\begin{proof}[Proof of Corollary \ref{cor:nice}] Since $S_\ts$ is conjugated to a symplectic rotation, it is clear that the numerical solution remains bounded. Concerning the imaginary part, since $y$ is a real vector, then
\begin{equation*}
\begin{split}
|\Im S_h^n y| = |(\Im S_h^n) y| \leq |\Im S_h^n| |y| &= |\Im  ( P_\ts^{-1} \, \e^{n\ts J D_{\omega_\ts}} \, P_\ts )| |y| \\ &=   | \Im(P_\ts^{-1} )\, \e^{n\ts J D_{\omega_\ts}} \, \Re(P_\ts) +   \Re(P_\ts^{-1}) \, \e^{n\ts J D_{\omega_\ts}} \, \Im(P_\ts) | |y|  \\ 
&\lesssim (|\Im(P_\ts^{-1}) | + |\Im(P_\ts)|) |y| \lesssim |\ts|^p |y|,
\end{split}
\end{equation*}
the last estimate coming from the fact that $P_\ts = P+\mathcal{O}(h^p)$ and that $P$ is a real symplectic matrix.

With respect to the actions, we have
$$
\mathcal{H} = \sum_{j=1}^N \omega_j \mathcal{I}_j,
$$
so that to get the almost preservation of the Hamiltonian, it is enough to prove the almost preservation of the actions. But since $S_\ts$ is similar to a symplectic
rotation, there exists modified actions which are preserved by the numerical flow, that is for all $n\in \mathbb{Z}$, all $\ts$ small enough and all $y\in \mathbb{R}^{2N}$,
$$
 \mathcal{I}_{j,h} (S_h^n y) =  \mathcal{I}_{j,h} (y) \qquad \mbox{ where } \qquad \mathcal{I}_{j,h}(y) := \mathcal{J}_j(P_\ts y)
$$
where $\mathcal{J}_j$ is defined in \eqref{eq:def_actions}. Recalling that $ \mathcal{I}_{j}(y) := \mathcal{J}_j(P y)$, the almost global preservation of the action is then just a consequence of the estimate $P_\ts = P + \mathcal{O}(h^p)$.
\end{proof}

Finally, we prove Theorem \ref{thm:simple_sympl}.

\begin{proof}[Proof of Theorem \ref{thm:simple_sympl}]  Since the matrices $S_\ts$ are complex symplectic, there exists a family of complex symmetric matrices $H_\ts$, depending smoothly on $\ts$, such that, if $|\ts|$ is small enough, then
$$
S_{\ts} = \e^{\ts J H_{\ts} }.
$$
Due to the consistency assumption, it holds that
\begin{equation}
\label{eq:cons_Hh}
 H_\ts \mathop{=}_{h\to 0} H+ \mathcal{O}(\ts^{p}). 
\end{equation}
Also by assumption, $H  = P^T D_\omega P$, so that, since $P$ is symplectic, we have (eq. \eqref{eq:sympl_inv})
$$
JH = P^{-1} JD_\omega P.
$$
On the other hand, 
\begin{equation}
\label{eq:useful}
L JD_\omega L^{-1}=  i \begin{pmatrix} \Delta_\omega \\ & -\Delta_\omega \end{pmatrix}, \quad \mbox{ where } \quad L: = \frac12 \begin{pmatrix} (1-i)I_N & -(1+i) I_N \\  (1-i) I_N & (1+i) I_N  \end{pmatrix}
\end{equation}
and thus the eigenvalues of $JH$ are $i\omega_1, \ldots, i\omega_N, -i\omega_1, \ldots, -i\omega_N$.

\medskip

Our goal now is to apply the technical Lemma \ref{lem:diag_cont} in $\mathfrak{g} = \mathfrak{sp}_{2N}(\mathbb{C})$, the Lie algebra of 
complex symplectic matrices, that is
$$
\mathfrak{sp}_{2N}(\mathbb{C}) = \{ J Q \ | \ Q \in \mathbb{C}^{2N \times 2N} \quad \mathrm{and} \quad Q^T = Q\}
$$
and to the family 
$$
M_\ts := L P(J H_h)P^{-1} L^{-1}.
$$
To do that, we have first to check that $M_\ts \in \mathfrak{sp}_{2N}(\mathbb{C})$. But this is clear, since $H_h$ is symmetric and $P$, $L$ are symplectic:
$$
M_\ts = J (P^{-1} L^{-1})^T H_h (P^{-1} L^{-1})  \in \mathfrak{sp}_{2N}(\mathbb{C}).
$$
and, moreover, $M_0$ is diagonal,
$$
M_0 = i \begin{pmatrix} \Delta_\omega \\ & -\Delta_\omega \end{pmatrix}.
$$
As a consequence of Lemma \ref{lem:diag_cont}, we get a family of complex symmetric matrices $\chi_\ts$, depending smoothly on $\ts$, such that, if $|\ts|$ is small enough, then
$$
[\widetilde{M_\ts}  , M_0 ] =0, \qquad \mbox{ where } \qquad \widetilde{M_\ts} := \e^{\ts^pJ \chi_\ts} M_\ts \, \e^{-\ts^pJ \chi_\ts}.
$$
Since $M_0$ is diagonal and by assumption its eigenvalues are simple, then $\widetilde{M_\ts}$ is also diagonal, i.e. there exist two vectors of complex 
numbers $\omega_\ts,\lambda_\ts \in \mathbb{C}^{N}$, depending smoothly on $\ts$, such that, provided that $|\ts|$ is small enough, 
$$
\widetilde{M_\ts} = i \begin{pmatrix} \Delta_{\omega_\ts} \\ & -\Delta_{\lambda_\ts} \end{pmatrix}.
$$

\medskip

Since by construction $\chi_\ts$ is symmetric, it follows that $\e^{\ts^pJ \chi_\ts} $ is symplectic and that $J\widetilde{M_\ts}$ is itself
a symmetric matrix. Therefore,
$$
\omega_\ts = \lambda_\ts
$$
and so, provided that $|\ts|$ is small enough,
$$
 JD_{\omega_\ts} = L^{-1}\widetilde{M_\ts}L =  L^{-1} \e^{\ts^pJ \chi_\ts}  L P(J H_h)P^{-1} L^{-1} \e^{-\ts^pJ \chi_\ts} L.
$$
As a consequence, setting
\begin{equation}
\label{eq:defPts}
P_\ts :=L^{-1} \e^{\ts^pJ \chi_\ts}  L P,
\end{equation}
we have proven, as expected, that if $|\ts|$ is small enough, 
\begin{equation}
\label{eq:partial_conclusion}
S_\ts = \e^{\ts J H_\ts} = P_\ts^{-1} \, \e^{\ts J D_{\omega_\ts}} \, P_\ts.
\end{equation}
It only remains to prove the consistency estimates and that the frequencies $\omega_\ts$ are real.

\medskip

For the consistency, by definition of $P_\ts$ (see \eqref{eq:defPts}), it is clear that
$
P_\ts = P + \mathcal{O}(h^p).
$
Then, for the frequencies $\omega$, we have proven that
$$
\widetilde{M_\ts}  =  \e^{\ts^pJ \chi_\ts} L P J (H_\ts-H) P^{-1} L^{-1} \e^{-\ts^pJ \chi_\ts} +  \e^{\ts^pJ \chi_\ts} M_0 \, \e^{-\ts^pJ \chi_\ts} ,
$$
which ensures, by definition of $\omega_\ts$ and by consistency of $H_\ts$  (see \eqref{eq:cons_Hh}), that
$$
\omega_\ts = \omega + \mathcal{O}(h^p).
$$

\medskip

Finally, we just have to prove that the frequencies $\omega_\ts$ are real. By relations \eqref{eq:partial_conclusion} and \eqref{eq:useful}, we know that 
$$
\sigma(J H_h) = i \{ \omega_{h,1}, \ldots, \omega_{h,N},  -\omega_{h,1}, \ldots, -\omega_{h,N} \}.
$$
On the other hand, by assumption, the spectrum of $S_h$ is invariant by complex conjugation, and so,  if $|\ts|$ is small enough,
$$
\sigma(J H_h) = \overline{\sigma(J H_h)}.
$$
which implies that
$$
\{ \omega_{h,1}, \ldots, \omega_{h,N},  -\omega_{h,1}, \ldots, -\omega_{h,N} \} = \{ \overline{\omega_{h,1}}, \ldots, \overline{\omega_{h,N}},  -\overline{\omega_{h,1}}, \ldots, -\overline{\omega_{h,N}} \} .
$$
Since the frequencies $\omega_1, \ldots, \omega_N$ are real, by consistency, we have
$$
\overline{\omega} = \omega + \mathcal{O}(h^p).
$$
As a consequence, under the assumption that $\omega_1, \ldots, \omega_N, -\omega_1, \ldots, -\omega_N$ are all distinct, and provided that
$$
|\ts|^p \lesssim \min\Big( \min_{j\neq \ell} \big| |\omega_j| - |\omega_\ell|\big|, \min_{1\leq j\leq N} |\omega_j| \Big)
$$
we deduce that for all $j\in \llbracket 1,N\rrbracket$, it holds that $\omega_{h,j} = \overline{\omega_{h,j} } $, i.e. that the frequencies are real.

\end{proof}

\section{Constructing alternating-conjugate schemes}
\label{sec.5}

We are interested now in constructing families of alternating-conjugate (AC) methods that generalize the schemes tested in previous sections. To proceed,
we consider the linear problem
\begin{equation} \label{prm1}
   \dot{u} = M u = (A_1 + A_2 + \cdots + A_m) u,
\end{equation}   
where $A_j \in \mathbb{C}^{N \times N}$, $j=1, \ldots, m \ge 2$, and apply composition \eqref{eq:psihtilde}. The Baker--Campbell--Hausdorff (BCH) formula allows us
to write the integrator $S_h$ as
\[
  S_h = \exp(M_h) = \exp( h M_1 + h^2 M_2 + h^3 M_3 + \cdots),
\]
where
\[
\begin{aligned}
  & M_1 = \sum_{j=1}^m (\alpha_j + \overline{\alpha}_j) A_j \\
  & M_2 = \sum_{j < k} (\alpha_j \alpha_k + \alpha_j \overline{\alpha}_k - \alpha_k \overline{\alpha}_j + \overline{\alpha}_j \overline{\alpha}_k) \, [A_j, A_k]
\end{aligned}
\]
The AC scheme $S_h$ is of order 2 if $M_1 = M$ and $M_2 = 0$. A straightforward computation shows that the only solutions of these equations are 
\begin{equation} \label{slt1}
   \alpha_1 = \cdots = \alpha_m \equiv a = \frac{1}{2} (1 \pm i ) \qquad \mbox{ and so } \qquad S_h = \Psi_{a h}^{[1]} \, \Psi_{\overline{a} h}^{[1]},
\end{equation}  
where $\Psi_h^{[1]}$ denotes the Lie--Trotter splitting. To achieve higher orders we might consider
the more general composition $S_h = \Psi_h \, \widetilde{\Psi}_h$ with
\begin{equation} \label{gen.split}
  \Psi_h = \prod_{j=1}^s \e^{h \alpha_1^{(j)} A_1} \, \e^{h \alpha_2^{(j)} A_2} \, \cdots \, \e^{h \alpha_m^{(j)} A_m},
\end{equation}
then apply sequentially the BCH formula to get the order conditions and finally to solve all of them. This procedure, however, is not feasible for $m > 2$, given the
extremely rapid growth of the number of order conditions required to achieve order $p \ge 3$. As is well known, this number is given by the dimension of the 
homogeneous subspace $\mathcal{L}_n$ of the free Lie algebra generated by $\{A_1, \ldots, A_m \}$ \cite{munthe-kaas99cia},
\[
  d(n,m) = \dim \mathcal{L}_n(A_1, \ldots, A_m) = \sum_{d|n} \mu(d) \, m^{n/d},
\]
where $\mu(d)$ is the M\"obius function. Thus, for instance, if $m=4$ then $d(n,4) = 4, 6, 20, 60, 204, 670$ for $n=1,\ldots, 6$. 
A more realistic approach consists then in taking $\Psi_h$ as a composition of the Lie--Trotter splitting with different parameters, or more generally, a composition of the form
\begin{equation} \label{psiac}
  \Psi_h = \Phi_{\alpha_1 h} \, \Phi_{\alpha_2 h} \, \cdots \, \Phi_{\alpha_r h},
\end{equation}
where $\Phi_h$ is any given consistent scheme and $\alpha_j \in \mathbb{C}$. This is the class of schemes we consider in the sequel.

\subsection{Modified vector field}
\label{sub.5.1}

We can get more insight the family of alternating-conjugate (AC) methods originated from \eqref{psiac} by analyzing the algebraic 
structure of the modified vector field associated with the scheme. 

If $\Phi_h$ is consistent (of first order, then there exists a series $Y(h) = \sum_{n \ge 1} h^n Y_n$ of matrices $Y_n$ satisfying
\cite{blanes24smf}
\[
  \Phi_h = \e^{Y(h)}, \qquad \mbox{ with } \qquad Y_1 = M,
\]
whereas if $\Phi_h$ is a 2nd-order time-symmetric method (for instance, the Strang splitting), then {$Y(h) = h M + \sum_{n \ge 1} h^{2n+1} Y_{2n+1}$}. In any case,
\[
  \Psi_h = \e^{Y(\alpha_1 h)} \, \e^{Y(\alpha_2 h)} \, \cdots \, \e^{Y(\alpha_r h)} 
\]
so that, by applying the BCH formula, we can write 
\[
  \Psi_h = \e^{K(h)},
\]
where $K(h)$ is the series
\begin{equation} \label{expreKh}
  K(h) = h \, k_{1,1} M + \sum_{\ell \ge 2} h^{\ell} \, \sum_{j=1}^{c(\ell)} k_{\ell,j} \, E_{\ell,j}.
\end{equation}
Here, $k_{i,j} \in \mathbb{C}$ depend on the coefficients $\alpha_n$ and $\{ E_{i,j} \}$ denote the elements of the homogeneous subspace
$\mathcal{L}_i(Y_1, Y_2, \ldots)$, of dimension $c(i)$. In particular, 
$E_{2,1} = Y_2$, $E_{3,1} = Y_3$, $E_{3,2} = [Y_1, Y_2]$, etc. {Thus, if $\Phi_h$ in \eqref{psiac} is of order 1, then
\begin{equation} \label{compo_o1}
\begin{aligned}
  & k_{1,1} = \sum_{i=1}^r \alpha_i, \qquad k_{2,1} = \sum_{i=1}^r \alpha_i^2, \qquad  k_{3,1} = \sum_{i=1}^r \alpha_i^3 \\
  & k_{3,2} = -\frac{1}{2} \sum_{i=1}^{r-1} \alpha_i^2 \sum_{j=i+1}^r \alpha_j + \frac{1}{2} \sum_{i=1}^{r-1} \alpha_i \sum_{j=i+1}^r \alpha_j^2,
\end{aligned}
\end{equation}
whereas, if $\Phi_h$ is a 2nd-order time-symmetric method, then ($Y_2 = 0$)
\begin{equation} \label{compo_o2}
  k_{1,1} = \sum_{i=1}^r \alpha_i, \qquad k_{3,1} = \sum_{i=1}^r \alpha_i^3.
\end{equation}  
}
Obviously, one also has
\[
  \widetilde{\Psi}_h = \e^{\widetilde{K}(h)}, \qquad \mbox{ with } \qquad 
  \widetilde{K}(h) = h \, \overline{k}_{1,1} M + \sum_{\ell \ge 2} h^{\ell} \, \sum_{j=1}^{c(\ell)} \overline{k}_{\ell,j} \, E_{\ell,j}
\]
and finally   
\begin{equation} \label{expreSh}
  S_h = \Psi_h \, \widetilde{\Psi}_h = \e^{K(h)} \, \e^{\widetilde{K}(h)} = \e^{M_h}.
\end{equation}

To see the requirements the modified vector field $K(h)$ has to satisfy to get an alternating-conjugate scheme of order $p$, it is illustrative to consider 
the first terms in $K(h)$ and $M_h$:
\begin{eqnarray*}
  K(h)  & = & h \, k_{1,1} M + h^2 \, k_{2,1} E_{2,1} + h^3 (k_{3,1} E_{3,1} + k_{3,2} E_{3,2}) + \mathcal{O}(h^4), \\
  M_h & = & h \, m_{1,1} M + h^2 \, m_{2,1} E_{2,1} + h^3 (m_{3,1} E_{3,1} + m_{3,2} E_{3,2}) + \mathcal{O}(h^4).
\end{eqnarray*}  
By applying the BCH formula in \eqref{expreSh} we get
\[
\begin{aligned}
   & M_h = 2 \, h \, \Re(k_{1,1}) M + 2 \, h^2 \, \Re(k_{2,1}) E_{2,1} + 2 h^3 \Big( \Re(k_{3,1}) E_{3,1} + \Re(k_{3,2}) E_{3,2} \Big) \\
   & \qquad + \frac{1}{2} h^3 (k_{1,1} \overline{k}_{2,1} - \overline{k}_{1,1} k_{2,1}) [M, E_{2,1}] + \mathcal{O}(h^4)
\end{aligned}
\]
Thus, $S_h = \Psi_h \, \widetilde{\Psi}_h$ is of order 2 if $m_{1,1} = 1, m_{2,1} = 0$ in the expression of $M_h$, so that $\Re(k_{1,1}) = \frac{1}{2}$ and $\Re(k_{2,1}) = 0$. In consequence,
\[
\begin{aligned}
 & K(h) = h \left( \frac{1}{2} + i \, d_{1,1} \right) M + i \, h^2 \, d_{2,1} E_{2,1} +  h^3 \big(k_{3,1} E_{3,1} + k_{3,2} E_{3,2} \big) + 
 \mathcal{O}(h^4) \\
 & M_h = h M  + 2 h^3 \Big( \Re(k_{3,1}) E_{3,1} + \Re(k_{3,2}) E_{3,2} \Big) - \frac{1}{2} \, i \, h^3 \, d_{2,1} [M, E_{2,1}]   + \mathcal{O}(h^4),
\end{aligned}
\]
where $d_{\ell,j} = \Im(k_{\ell,j})$.
If in addition we impose that $\Re(k_{3,1}) = \Re(k_{3,2}) = d_{2,1} = 0$, then $S_h$ is of order 3 and
\[
 K(h) = h \left( \frac{1}{2} + i \, d_{1,1} \right) M + i \, h^3  \big(d_{3,1} E_{3,1} + d_{3,2} E_{3,2} \big) + 
 \mathcal{O}(h^4).
\] 
The following proposition establishes the general structure of the modified vector fields associated
to $\Psi_h$ and $S_h$.

\begin{proposition} \label{prop53}
Let $S_h = \Psi_h \, \widetilde{\Psi}_h = \e^{K(h)} \e^{\widetilde{K}(h)} = \e^{M_h}$ be an alternating-conjugate method of order $p \ge 2$. Then $K(h)$ is such that
\[
\begin{aligned}
   & \Re(k_{1,1}) = \frac{1}{2}, \qquad \Re(k_{p,j}) = 0; \quad j=1, \ldots, c(p)\\
   & k_{\ell,j} = 0, \qquad\quad  \ell =2, \ldots, p-1; \,\, j=1, \ldots, c(\ell).
 \end{aligned}
 \]
 In other words, 
  \begin{equation} \label{structureK}
    K(h) = h \left( \frac{1}{2} + i \, d_{1,1} \right) M + i \, h^p  \sum_{j=1}^{c(p)} d_{p,j} E_{p,j} + \sum_{\ell \ge p+1} h^{\ell} \, \sum_{j=1}^{c(\ell)} k_{\ell,j} E_{\ell,j}
\end{equation}   
with $d_{i,j} \in \mathbb{R}$, $k_{\ell,j} \in \mathbb{C}$, and moreover,
\begin{equation} \label{ecu.p3}
\begin{aligned}
  & M_h = h M + h^{p+1} \left( \sum_{j=1}^{c(p+1)} 2 \, \Re(k_{p+1,j}) E_{p+1,j} - \frac{1}{2} i \sum_{j=1}^{c(p)} d_{p,j} \, [M, E_{p,j}] \right) \\
  & + \sum_{\ell = 2}^{p-1} h^{p + \ell} \, \sum_{j=1}^{c(p+\ell)} \, 2 \, \Re(k_{p+\ell,j}) \, E_{p+ \ell, j} \\
  &  + \sum_{\ell =2}^{p-1} h^{p + \ell}  \, \sum_{n=1}^{\ell -1} \, \sum_{j=1}^{c(p + \ell - n)} \, \big( \omega_{p + \ell - n, j} - i \, \nu_{p + \ell - n, j} \big) 
  \,\,  \mathrm{ad}_M^n(E_{p + \ell -n, j})  +  \mathcal{O}(h^{2p}),
\end{aligned}
\end{equation}
with $\omega_{i,j}, \nu_{i,j} \in \mathbb{R}$.
\end{proposition}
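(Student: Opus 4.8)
The plan is to establish \eqref{structureK} by induction on the order of the method, and then to obtain \eqref{ecu.p3} by a bounded-degree BCH computation organised around the splitting of $K(h)$ into its real and imaginary parts.

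For \eqref{structureK} I would write $M_h = \mathrm{BCH}(K(h),\widetilde K(h)) = K(h)+\widetilde K(h)+\tfrac12[K(h),\widetilde K(h)]+\cdots$ and proceed degree by degree. Comparing $h^1$ coefficients gives $2\Re(k_{1,1})M = M$, i.e. $\Re(k_{1,1}) = \tfrac12$. Now suppose inductively that $k_{\ell,j}=0$ for $2\le\ell\le L-1$ with $L\le p-1$ (vacuous for $L=2$). Then $K(h) = h k_{1,1}M + K_{\ge L}$ with $K_{\ge L} = \mathcal{O}(h^L)$, so $[K,\widetilde K] = h k_{1,1}[M,\widetilde K_{\ge L}] - h\bar k_{1,1}[M,K_{\ge L}] + [K_{\ge L},\widetilde K_{\ge L}] = \mathcal{O}(h^{L+1})$ while the remaining terms of the BCH series are $\mathcal{O}(h^{L+2})$. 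Hence the $h^L$ coefficient of $M_h$ is $\sum_j 2\Re(k_{L,j})E_{L,j}$; consistency (order $p>L$) forces $\Re(k_{L,j})=0$, so $k_{L,j}=i d_{L,j}$ with $d_{L,j}\in\mathbb{R}$. Substituting this back, the $h^{L+1}$ coefficient of $M_h$ is $\sum_j 2\Re(k_{L+1,j})E_{L+1,j} - \tfrac i2\sum_j d_{L,j}[M,E_{L,j}]$, which must vanish since $L+1\le p$. Expanding $[M,E_{L,j}]$ over $\{E_{L+1,j'}\}$ (the structure constants being rational), the first sum carries real coefficients and the second purely imaginary ones, so both vanish separately: $\Re(k_{L+1,j})=0$ and $[M,\sum_j d_{L,j}E_{L,j}]=0$. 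Since $\mathrm{ad}_M$ is injective on $\mathcal{L}_n$ for all $n\ge2$ (a standard property of free Lie algebras: the centraliser of a nonzero element equals its span, and $\mathbb{Q}M\cap\mathcal{L}_n=0$), this gives $\sum_j d_{L,j}E_{L,j}=0$, hence $d_{L,j}=0$ and $k_{L,j}=0$, closing the induction. The terminal case $L=p$ yields only $\Re(k_{p,j})=0$, and \eqref{structureK} follows.

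For \eqref{ecu.p3}, using \eqref{structureK} I would write $K(h) = P(h) + iQ(h)$ with $P,Q$ Lie series with real coefficients, $P = \tfrac12 h M + \rho$, $\rho = \sum_{\ell\ge p+1}h^\ell\sum_j\Re(k_{\ell,j})E_{\ell,j} = \mathcal{O}(h^{p+1})$, and $Q = d_{1,1}hM + \iota$, $\iota = h^p\sum_j d_{p,j}E_{p,j} + \mathcal{O}(h^{p+1})$; then $\widetilde K(h) = P(h)-iQ(h)$ and $M_h = \mathrm{BCH}(P+iQ,P-iQ)$, to be expanded modulo $\mathcal{O}(h^{2p})$. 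Grading Lie monomials in $P,Q$ by their number of $Q$-factors, $M_h = \sum_k M_h^{(k)}$, one has $\overline{M_h} = \mathrm{BCH}(P-iQ,P+iQ)$ ($\mathrm{BCH}$ having rational coefficients), which equals the image of $M_h$ under the Lie automorphism $Q\mapsto -Q$; comparison gives $\overline{M_h^{(k)}}=(-1)^k M_h^{(k)}$, so $M_h^{(k)}$ has coefficients in $i^k\mathbb{R}$. The part $M_h^{(0)} = \mathrm{BCH}(P,P) = 2P = hM + \sum_{\ell\ge p+1}h^\ell\sum_j 2\Re(k_{\ell,j})E_{\ell,j}$ supplies the $\sum_j 2\Re(k_{p+\ell,j})E_{p+\ell,j}$ terms. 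Every monomial of $M_h^{(\ge1)}$ is an iterated bracket of length $\ge2$; modulo $\mathcal{O}(h^{2p})$ it involves at most one of the higher pieces $\rho,\iota$ (two would be $\mathcal{O}(h^{2p})$), and as the remaining factors are scalar multiples of $M$ — hence commute — the Jacobi identity collapses it to a scalar multiple of $\mathrm{ad}_M^n(F)$ with $n\ge1$ and $F$ a term $h^p d_{p,j}E_{p,j}$ or $h^\ell k_{\ell,j}E_{\ell,j}$ ($\ell\ge p+1$); in particular nothing but $2P$ contributes at order $0$ in $\mathrm{ad}_M$. A term $\mathrm{ad}_M^n(E_{\ell',j})$ lies at total degree $n+\ell'$ with $\ell'\ge p$, so at degree $p+\ell$ ($\ell\le p-1$) one gets $n\in\{1,\dots,\ell\}$ and, by the grading above, each accumulated coefficient has the form $\omega-i\nu$ with $\omega,\nu\in\mathbb{R}$. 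It remains to treat the boundary index $n=\ell$, acting on $E_{p,j}$: for $\ell\ge2$ I rewrite $\mathrm{ad}_M^\ell(E_{p,j}) = \mathrm{ad}_M^{\ell-1}\big([M,E_{p,j}]\big)$ and re-expand $[M,E_{p,j}]$ over $\{E_{p+1,j'}\}$, putting it into the range $1\le n\le\ell-1$; for $\ell=1$ the term $[M,E_{p,j}]$ is kept, and since $M_h^{(2)}=\mathcal{O}(h^{p+2})$ the entire $h^{p+1}$ bracket contribution comes from $M_h^{(1)}$, hence is purely imaginary — a direct evaluation of $\tfrac12[K,\widetilde K] = -i[P,Q] = -\tfrac i2 h^{p+1}\sum_j d_{p,j}[M,E_{p,j}]+\mathcal{O}(h^{p+2})$ pins it down as $-\tfrac i2\sum_j d_{p,j}[M,E_{p,j}]$. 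Collecting all contributions by total degree yields \eqref{ecu.p3}.

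The delicate point is the bookkeeping in the second part: checking that, modulo $\mathcal{O}(h^{2p})$, every surviving bracket monomial really collapses to $\mathrm{ad}_M^n$-form, then reorganising by degree while correctly handling the boundary ($E_{p,j}$) terms and fixing the numerical constant $-\tfrac12$ in front of the $h^{p+1}$ bracket; the rest is bounded-degree BCH algebra together with the injectivity of $\mathrm{ad}_M$ already used above. A convenient way to package the collapse is to treat $\rho$ and $\iota$ as first-order perturbations of $hk_{1,1}M$ and $d_{1,1}hM$ and differentiate $\mathrm{BCH}$ once, writing $M_h = hM + D_1\mathrm{BCH}(hk_{1,1}M,h\bar k_{1,1}M)[\rho+i\iota] + D_2\mathrm{BCH}(hk_{1,1}M,h\bar k_{1,1}M)[\rho-i\iota] + \mathcal{O}(h^{2p})$: both differentials, evaluated at multiples of $M$, are power series in $\mathrm{ad}_M$, which makes the $\mathrm{ad}_M^n$ structure — and the fact that only $2P$ survives at order $0$ in $\mathrm{ad}_M$ — manifest.
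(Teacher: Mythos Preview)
Your proposal is correct and follows the same overall route as the paper: an induction on the order to obtain \eqref{structureK}, followed by a BCH expansion of $\e^{K(h)}\e^{\widetilde K(h)}$ truncated at $\mathcal{O}(h^{2p})$ to obtain \eqref{ecu.p3}. The paper's own proof is considerably terser: it treats $p=2,3$ explicitly just before the proposition, then assumes \eqref{structureK} and computes $K+\widetilde K$, $[K,\widetilde K]$ and $[K,[K,\widetilde K]]$ directly, asserting that all further BCH terms contribute only nested commutators $\mathrm{ad}_M^r E_{\ell,j}$ with real and imaginary coefficients.

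Two points where your write-up goes beyond the paper are worth flagging. First, you make explicit the ingredient that actually closes the induction for \eqref{structureK}: the injectivity of $\mathrm{ad}_M$ on $\mathcal{L}_n$ for $n\ge 2$ (equivalently, the centraliser of the free generator $Y_1$ is its own span). The paper uses this only implicitly, via $[Y_1,Y_2]\neq 0$ in the $p=2\to 3$ step, and never states it in general. Second, your $K=P+iQ$ decomposition with the $Q$-parity grading gives a clean, coordinate-free reason why the coefficients in \eqref{ecu.p3} split as $\omega-i\nu$, and your absorption of the boundary case $n=\ell$ via $\mathrm{ad}_M^\ell(E_{p,j})=\mathrm{ad}_M^{\ell-1}([M,E_{p,j}])$ explains why the inner sum in \eqref{ecu.p3} stops at $\ell-1$; the paper simply asserts this form. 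Your final repackaging via the differential of BCH at commuting arguments is a convenient device (not in the paper) that makes the $\mathrm{ad}_M^n$ structure manifest without tracking individual BCH words.
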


\begin{proof}
We have already seen that this is the case when $p=2$. Suppose now that $K(h)$ has the structure of eq. \eqref{structureK} for $p > 2$. 
Now, applying the BCH formula in \eqref{expreSh}, we get
\[
  M_h = K(h) + \widetilde{K}(h) + \frac{1}{2} [K(h), \widetilde{K}(h)] + \frac{1}{12} ( [K(h), [K(h), \widetilde{K}(h)]] - [\widetilde{K}(h), [K(h), \widetilde{K}(h)]] ) + \cdots.
\]
Clearly
\begin{equation} \label{2k}
  K(h) + \widetilde{K}(h) = h M + h^{p+1} \,  \sum_{j=1}^{c(p+1)} 2 \, \Re(k_{p+1,j}) E_{p+1,j} + \sum_{\ell \ge p+2} h^{\ell} \,  \sum_{j=1}^{c(\ell)} 2 \, \Re(k_{\ell,j}) E_{\ell,j}
\end{equation}
whereas
\begin{equation} \label{com1K}
 [K(h), \widetilde{K}(h)] = -i \, h^{p+1} \,  \sum_{j=1}^{c(p)} d_{p,j} \, \mathrm{ad}_M E_{p,j} - i \sum_{\ell=p+1}^{2p-2} h^{\ell +1} \sum_{j=1}^{c(\ell)} 
 \big(d_{\ell,j} - 2 d_{1,1} \Re(k_{\ell,j}) \big) \, \mathrm{ad}_M E_{\ell,j} + \mathcal{O}(h^{2p}).
\end{equation}
Here, as before, $d_{\ell,j} = \Im(k_{\ell,j})$. Successive terms in the BCH series of $M_h$ are linear combinations of nested commutators of the form $\mathrm{ad}_M^r E_{\ell,j}$ with real and imaginary coefficients.
In particular,
\[
\begin{aligned}
  & [K(h), [K(h), \widetilde{K}(h)]] =  -i \, h^{p+2} \,  \left( \frac{1}{2} + i \, d_{1,1} \right) \sum_{j=1}^{c(p)} d_{p,j} \, \mathrm{ad}_M^2 E_{p,j}  \\
  & \quad - i h \left( \frac{1}{2} + i \, d_{1,1} \right)  \sum_{\ell=p+1}^{2p-3} h^{\ell +1} \sum_{j=1}^{c(\ell)} 
 \big(d_{\ell,j} - 2 d_{1,1} \Re(k_{\ell,j}) \big) \, \mathrm{ad}_M^2 E_{\ell,j} + \mathcal{O}(h^{2p}).
\end{aligned} 
\] 
In consequence, 
the expressions  in the first and second lines of \eqref{ecu.p3} are obtained by adding the first term in \eqref{2k} and the first term in \eqref{com1K}, 
whereas adding up the remaining
terms leads to the expression in the third line.
\end{proof}

Notice that, if we additionally impose the conditions
\[
 \begin{aligned}
   & \Re(k_{p+1,j}) = 0, \quad j=1, \ldots, c(p+1) \\
   & d_{p,j} = 0, \quad j=1, \ldots, c(p),
 \end{aligned}
\]
then $S_h$ is of order $p+1$ with $K(h)$ of the form \eqref{structureK} up to order $p+1$.

One remarkable consequence of Proposition \ref{prop53} is the following.
\begin{proposition} \label{prop52}
With the hypothesis of Proposition \ref{prop53}, the alternating-conjugate method $S_h = \e^{M_h}$ can be written as
\[
  S_h = \e^{M_h} = \e^{P_h} \, \e^{W_h} \, \e^{-P_h},
\]
where
 \[
 \begin{aligned}
  & P_h =  \frac{1}{2} i \, h^p \sum_{j=1}^{c(p)} d_{p,j} E_{p,j} + 
    i \, \sum_{\ell =1}^{p-2} h^{p + \ell}  \, \sum_{n=1}^{\ell -2} \, \sum_{j=1}^{c(p + \ell - n)} \, \nu_{p + \ell - n, j} \,\,  \mathrm{ad}_M^n(E_{p + \ell -n, j}) \\
  & W_h = h M + \sum_{\ell=1}^{p-1} h^{p+ \ell}  \sum_{j=1}^{c(p+\ell)} 2 \, \Re(k_{p+\ell,j}) E_{p+\ell,j}  +
  \sum_{\ell = 2}^{p-1}  h^{p + \ell}  \, \sum_{n=1}^{\ell -1} \, \sum_{j=1}^{c(p + \ell - n)} \, \omega_{p + \ell - n, j} \,\,  \mathrm{ad}_M^n(E_{p + \ell -n, j}) \\
  & \qquad + \mathcal{O}(h^{2p}).
\end{aligned}
\]
\end{proposition}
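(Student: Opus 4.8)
The plan is to convert the claimed factorisation into a single conjugation identity and then to build $P_h$ by solving a homological equation, in the same spirit as Lemma~\ref{lem:diag_cont}. Using the exact identity $\e^{P}\e^{W}\e^{-P}=\exp\!\big(\e^{\mathrm{ad}_{P}}W\big)$, the assertion $S_h=\e^{P_h}\e^{W_h}\e^{-P_h}$ is equivalent to
\[
M_h=\e^{\mathrm{ad}_{P_h}}W_h,\qquad\text{that is}\qquad W_h=\e^{-\mathrm{ad}_{P_h}}M_h .
\]
So I would first decompose the modified field of Proposition~\ref{prop53} according to real and imaginary coefficients, $M_h=R_h+i\,V_h$: by \eqref{ecu.p3} the part $R_h=hM+\mathcal O(h^{p+1})$ collects exactly the terms with real coefficients (namely $hM$, the $2\,\Re(k_{p+\ell,j})E_{p+\ell,j}$ and the $\omega_{p+\ell-n,j}\,\mathrm{ad}_M^{\,n}E_{p+\ell-n,j}$), while $V_h=\mathcal O(h^{p+1})$ is a \emph{real} combination of $[M,E_{p,j}]$ and of elements $\mathrm{ad}_M^{\,n}E_{\ell,j}$ with $n\ge1$. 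The decisive structural fact is that every term of $V_h$ therefore lies in the range of $\mathrm{ad}_M$.

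The second step is an order reduction. Since $P_h=\mathcal O(h^p)$ and $W_h$ is only needed modulo $\mathcal O(h^{2p})$, the Baker--Campbell--Hausdorff expansion of $\e^{-\mathrm{ad}_{P_h}}M_h$ collapses: both $[P_h,[P_h,M_h]]$ and $[P_h,\,M_h-hM]$ are $\mathcal O(h^{2p+1})$, hence
\[
W_h=M_h-[P_h,M_h]+\mathcal O(h^{2p})=M_h+h\,\mathrm{ad}_M(P_h)+\mathcal O(h^{2p}).
\]
If $P_h$ is taken purely imaginary, then $h\,\mathrm{ad}_M(P_h)$ is purely imaginary as well (because $M$ is real), so $W_h$ has real coefficients up to $\mathcal O(h^{2p})$ precisely when $h\,\mathrm{ad}_M(P_h)=-i\,V_h$, i.e. when $\mathrm{ad}_M(P_h)=-\tfrac ih\,V_h$. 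This homological equation is solvable by the structural fact above: writing each term of $V_h$ as a single application of $\mathrm{ad}_M$ — via $[M,E_{p,j}]=\mathrm{ad}_M(E_{p,j})$ and $\mathrm{ad}_M^{\,n}E_{\ell,j}=\mathrm{ad}_M(\mathrm{ad}_M^{\,n-1}E_{\ell,j})$ — produces an explicit purely imaginary preimage (unique modulo $\ker\mathrm{ad}_M$, the natural representative being the one in the statement). Its leading piece, coming from the order-$h^{p+1}$ term $-\tfrac12 i\,h^{p+1}\sum_j d_{p,j}[M,E_{p,j}]$ of $V_h$, is $\tfrac12 i\,h^{p}\sum_j d_{p,j}E_{p,j}$, and re-indexing the remaining terms — each dropped $\mathrm{ad}_M$ lowers the $h$-power by one — yields the remaining terms of $P_h$. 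Substituting this $P_h$ back gives $W_h=R_h+\big(i\,V_h+h\,\mathrm{ad}_M(P_h)\big)+\mathcal O(h^{2p})=R_h+\mathcal O(h^{2p})$, which is the displayed expression for $W_h$ once the terms of $R_h$ in \eqref{ecu.p3} are regrouped.

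The main obstacle is the bookkeeping underlying this order reduction: one must verify that conjugating by $P_h$ does not re-introduce imaginary contributions at orders that have already been cleared, and that the discarded BCH brackets are genuinely $\mathcal O(h^{2p})$ — this is exactly where $P_h=\mathcal O(h^p)$ and the fact from Proposition~\ref{prop53} that $M_h$ has no terms between orders $h$ and $h^{p+1}$ are used. A convenient way to make this airtight, should the one-step argument feel too compressed, is to construct $P_h$ order by order from $h^{p+1}$ to $h^{2p-1}$: at stage $k$ one solves the homological equation for the imaginary defect at order $h^{p+k}$ with a purely imaginary $P_h^{(k)}=\mathcal O(h^{p+k-1})$, and since such a $P_h^{(k)}$ modifies only the imaginary part at order $h^{p+k}$, no solvability obstruction beyond the one already supplied by Proposition~\ref{prop53} is ever generated; keeping track of which $E_{\ell,j}$ appears with which power of $\mathrm{ad}_M$ at each stage then delivers the stated index ranges.
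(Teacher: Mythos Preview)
Your approach is correct and essentially the same as the paper's: both reduce the factorisation to the conjugation identity $\e^{P}\e^{W}\e^{-P}=\exp(\e^{\mathrm{ad}_P}W)$, expand $W_h=\e^{-\mathrm{ad}_{P_h}}M_h=M_h+[hM,P_h]+\mathcal O(h^{2p})$ using $P_h=\mathcal O(h^p)$ and $M_h-hM=\mathcal O(h^{p+1})$, and then observe that the stated $P_h$ cancels the purely imaginary part of $M_h$. Your write-up is in fact more explicit than the paper's --- you actually derive $P_h$ by solving the homological equation $\mathrm{ad}_M(P_h)=-\tfrac{i}{h}V_h$ and explain why it is solvable, whereas the paper simply posits $P_h$ and checks the cancellation in one line.
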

\begin{proof}
In virtue of the well known identity $\e^A \e^B \e^{-A} = \e^C$, with $C = \e^{\mathrm{ad}_A} B$, we have
\[
  W_h =  \e^{\mathrm{ad}_{-P_h}} M_h = M_h - [P_h, M_h] + \mathcal{O}(h^{2p+2}) = M_h + [h M, P_h] + \mathcal{O}(h^{2p})
\]
so that with this choice for $P_h$, the imaginary part of $M_h$ in \eqref{ecu.p1} can be removed up to order $\mathcal{O}(h^{2p-1})$.
\end{proof}

\begin{corollary}
 If the problem \eqref{prm1} is defined in a particular Lie group $\mathcal{G}$
 and the basic scheme $\Phi_h$ in the composition \eqref{psiac} also evolves in $\mathcal{G}$, then the alternating-conjugate
 method $S_h$ of order $p$ is conjugate to a method that preserves the Lie group structure up to order $2p-1$. 
\end{corollary}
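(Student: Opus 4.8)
The plan is to read off the conclusion directly from Proposition~\ref{prop52}, once it is checked that the ``building blocks'' $M$, $E_{\ell,j}$ and $\mathrm{ad}_M^n(E_{\ell,j})$ that appear in $P_h$ and $W_h$ all belong to the Lie algebra $\mathfrak{g}$ of $\mathcal{G}$. Throughout, let $\mathfrak{g}_{\mathbb{C}} = \mathfrak{g}\oplus i\mathfrak{g}$ denote the complexification of $\mathfrak{g}$ and $\mathcal{G}_{\mathbb{C}}$ the associated complex matrix Lie group.

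First I would justify that the relevant elements lie in $\mathfrak{g}$. Since the problem \eqref{prm1} is posed on $\mathcal{G}$ we have $M\in\mathfrak{g}$; and since the basic scheme $\Phi_h$ is consistent and its flow stays in $\mathcal{G}$ for all small real $h$, the near-identity logarithm argument (equivalently, backward error analysis for Lie-group methods) shows that the modified vector field $Y(h)=\sum_{n\geq 1}h^nY_n$ with $\Phi_h=\e^{Y(h)}$ takes values in $\mathfrak{g}$, and extracting Taylor coefficients gives $Y_n\in\mathfrak{g}$ for every $n$. For a complex step size $\alpha$ this yields $Y(\alpha h)=\sum_n \alpha^n h^n Y_n \in \mathfrak{g}_{\mathbb{C}}$; applying the BCH formula to \eqref{psiac} as in the derivation of \eqref{expreKh}, each homogeneous element $E_{\ell,j}$ of $\mathcal{L}_\ell(Y_1,Y_2,\ldots)$ --- which is an iterated bracket monomial in the $Y_n$ with real (indeed rational) structure constants --- lies in the \emph{real} algebra $\mathfrak{g}$, whereas $K(h)\in\mathfrak{g}_{\mathbb{C}}$ is a combination of these real elements with complex coefficients $k_{\ell,j}$. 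Since $M=Y_1\in\mathfrak{g}$, also $\mathrm{ad}_M^n(E_{\ell,j})\in\mathfrak{g}$.

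Next I would invoke Proposition~\ref{prop52}: $S_h=\e^{P_h}\,\e^{W_h}\,\e^{-P_h}$, with $\e^{P_h}=I+\mathcal{O}(h^p)$ a near-identity transformation. From its explicit form, $P_h$ is $i$ times a \emph{real} linear combination of the elements $E_{p,j}$ and $\mathrm{ad}_M^n(E_{p+\ell-n,j})$, hence $P_h\in i\mathfrak{g}\subset\mathfrak{g}_{\mathbb{C}}$ and $\e^{P_h}\in\mathcal{G}_{\mathbb{C}}$, so the conjugate method $\widehat S_h:=\e^{-P_h}S_h\,\e^{P_h}=\e^{W_h}$ is a legitimate element of $\mathcal{G}_{\mathbb{C}}$. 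Now every coefficient displayed in the expression for $W_h$ in Proposition~\ref{prop52} --- namely the coefficient $1$ of $hM$, the $2\,\Re(k_{p+\ell,j})$ and the $\omega_{p+\ell-n,j}$ --- is real, so the explicitly written part of $W_h$, call it $V_h$, is a real linear combination of elements of $\mathfrak{g}$; thus $V_h\in\mathfrak{g}$ and $\e^{V_h}\in\mathcal{G}$ exactly. Since $W_h=V_h+\mathcal{O}(h^{2p})$, we obtain $\widehat S_h=\e^{W_h}=\e^{V_h+\mathcal{O}(h^{2p})}=\e^{V_h}\big(I+\mathcal{O}(h^{2p})\big)$, so $\widehat S_h$ differs from the genuinely $\mathcal{G}$-valued map $\e^{V_h}$ by a term of size $\mathcal{O}(h^{2p})$; equivalently, $\widehat S_h$ agrees to order $2p-1$ with a method that preserves the Lie group structure, which is precisely the assertion.

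I expect the only genuinely delicate point to be the first step --- certifying that the modified vector field $Y(h)$, and therefore all the $E_{\ell,j}$, lie in $\mathfrak{g}$. This is exactly where the hypothesis ``$\Phi_h$ evolves in $\mathcal{G}$'' enters, via the modified-equation principle for Lie-group integrators (or, more elementarily, via $\log\Phi_h\in\mathfrak{g}$ in a neighbourhood of the identity followed by a Taylor expansion in $h$). Everything after that is bookkeeping on top of Proposition~\ref{prop52}, using only that the decomposition $\mathfrak{g}_{\mathbb{C}}=\mathfrak{g}\oplus i\mathfrak{g}$ makes ``taking the real part of a coefficient'' coincide with projecting onto $\mathfrak{g}$, so that the real-coefficient part $V_h$ of $W_h$ is automatically an element of $\mathfrak{g}$.
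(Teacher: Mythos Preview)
Your proposal is correct and follows exactly the route the paper intends: the corollary is stated immediately after Proposition~\ref{prop52} and is meant to be read off from it, the point being that the displayed part of $W_h$ has purely real coefficients in front of elements $E_{\ell,j}$, $\mathrm{ad}_M^n(E_{\ell,j})$ that lie in the real Lie algebra $\mathfrak{g}$. Your write-up is in fact more detailed than the paper's (which gives no explicit proof), since you spell out why $Y_n\in\mathfrak{g}$ and hence $E_{\ell,j}\in\mathfrak{g}$, and you make explicit the role of the complexification $\mathfrak{g}_{\mathbb{C}}$ in interpreting the conjugating factor $\e^{P_h}$.
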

In particular, if $A_j$ are skew-Hermitian matrices, then $S_h$ is conjugate to a unitary matrix up to order $2p-1$, if $A_j$ are skew-symmetric, then $S_h$ is
conjugate to an orthogonal matrix, etc.

\

Proposition \ref{prop53} can be particularized to the case where the method $\Psi_h$ is of order $p$:
\begin{corollary} \label{prop51}
Let $\Psi_h$ be a method of order $p \ge 1$ with step-size $\frac{h}{2}$. Then, the alternating-conjugate method $S_h = \Psi_h \, \widetilde{\Psi}_h = \e^{M_h}$ is also
of order $p$ and
\begin{equation} \label{ecu.p1}
\begin{aligned}
  & M_h = h M + \sum_{\ell = 1}^{p+1} h^{p + \ell} \, \sum_{j=1}^{c(p+\ell)} \, 2 \, \Re(k_{p+\ell}) \, E_{p+ \ell, j} \\
  & \quad -i \, \sum_{\ell =2}^{p+1} h^{p + \ell}  \, \sum_{n=1}^{\ell -1} \, \sum_{j=1}^{c(p + \ell - n)} \, \nu_{p + \ell - n, j} \,\,  \mathrm{ad}_M^n(E_{p + \ell -n, j}) +
  \mathcal{O}(h^{2p+2}),
\end{aligned}
\end{equation}
where $\nu_{i,j} \in \mathbb{R}$.
\end{corollary}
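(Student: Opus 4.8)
The plan is to derive \eqref{ecu.p1} directly from a Baker--Campbell--Hausdorff (BCH) expansion of $S_h=\Psi_h\widetilde\Psi_h$, exploiting that $\Psi_h$ is now assumed to be of full order $p$ (at step $h/2$), a hypothesis strictly stronger than the one of Proposition~\ref{prop53}. Indeed, an order-$p$ method run with step $h/2$ has a modified vector field \eqref{expreKh} of the simpler form $K=\tfrac h2 M+R$ with $R=\sum_{\ell\ge p+1}h^\ell\sum_j k_{\ell,j}E_{\ell,j}=\mathcal O(h^{p+1})$; in terms of \eqref{structureK} this means $d_{1,1}=0$ \emph{and} $d_{p,j}=0$, i.e.\ the order-$h^p$ term of $K$ is absent, not merely its real part. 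Writing $A_0:=\tfrac h2 M$ and $R=\Re R+i\,\Im R$ with $\Re R,\Im R$ real-coefficient and of order $h^{p+1}$, we have $\widetilde\Psi_h=\e^{\widetilde K}$ with $\widetilde K=A_0+\Re R-i\,\Im R$, so $M_h=\mathrm{BCH}(K,\widetilde K)$. Once one shows $M_h=hM+\mathcal O(h^{p+1})$ it follows that $S_h=\e^{M_h}=\e^{hM}+\mathcal O(h^{p+1})$, i.e.\ $S_h$ has order $p$.

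The core step is to Taylor--expand $M_h=\mathrm{BCH}(A_0+R,A_0+\overline R)$ in powers of the $\mathcal O(h^{p+1})$ quantity $R$ about the diagonal $K=\widetilde K=A_0$. The zeroth-order term is $\mathrm{BCH}(A_0,A_0)=2A_0=hM$. For the first-order term I would compute the differential of BCH at the diagonal: from $\e^S\e^S=\e^{2S}$ and the standard $\mathrm{d}\exp$ identity (the same tool used for $\varphi_1$ in Lemma~\ref{lem:diag_cont}) one gets, for the partial derivatives of $\mathrm{BCH}$ with respect to its two arguments, $D_1\mathrm{BCH}(S,S)=\tfrac{2}{\e^{\mathrm{ad}_S}+1}$ and $D_2\mathrm{BCH}(S,S)=\tfrac{2\,\e^{\mathrm{ad}_S}}{\e^{\mathrm{ad}_S}+1}$, so the linear-in-$R$ part of $M_h$ equals $\tfrac{2}{\e^{\mathrm{ad}_{A_0}}+1}\bigl(R+\e^{\mathrm{ad}_{A_0}}\overline R\bigr)$. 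Decomposing $R+\e^{\mathrm{ad}_{A_0}}\overline R=(1+\e^{\mathrm{ad}_{A_0}})\Re R+i\,(1-\e^{\mathrm{ad}_{A_0}})\Im R$ and invoking the elementary identities $\tfrac{2}{\e^z+1}(1+\e^z)=2$ and $\tfrac{2}{\e^z+1}(1-\e^z)=-2\tanh(z/2)=-z\,\widehat\rho(z)$, with $\widehat\rho$ analytic and $\widehat\rho(0)=1$, this part collapses to $2\Re R-i\,\mathrm{ad}_M(\eta)$, where $\eta:=\tfrac h2\,\widehat\rho\bigl(\tfrac h4\mathrm{ad}_M\bigr)\Im R=\mathcal O(h^{p+2})$ is real-coefficient. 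Every term of total degree $\ge2$ in $(R,\overline R)$ carries two factors of size $\mathcal O(h^{p+1})$, hence is $\mathcal O(h^{2p+2})$. Collecting,
\[
M_h=hM+2\Re R-i\,\mathrm{ad}_M(\eta)+\mathcal O(h^{2p+2}),
\]
which in particular gives $M_h=hM+\mathcal O(h^{p+1})$.

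To finish, I would read off \eqref{ecu.p1} from this formula. Expanding $2\Re R=\sum_{\ell\ge p+1}h^\ell\sum_j2\Re(k_{\ell,j})E_{\ell,j}$ and truncating at $\mathcal O(h^{2p+2})$ yields the first line of \eqref{ecu.p1}, with real coefficients as required; expanding $\eta$ as a sum of terms $h^{m+1}\mathrm{ad}_M^m(\Im R)$ with real coefficients and then applying $\mathrm{ad}_M$ turns $-i\,\mathrm{ad}_M(\eta)$ into a sum of terms $-i\,\nu\,h^{p+\ell}\mathrm{ad}_M^n(E_{p+\ell-n,j})$ with $\nu\in\mathbb R$ and $n\ge1$, i.e.\ exactly the second line of \eqref{ecu.p1}. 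The extra information over Proposition~\ref{prop53} is precisely the \emph{absence} of real nested-$\mathrm{ad}_M$ contributions --- the $\omega$-terms of \eqref{ecu.p3} vanish here --- because the only source of such terms is the linear-in-$R$ part, whose real part reduces to the clean $2\Re R$ owing to $\tfrac{2}{\e^z+1}(1+\e^z)=2$.

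An alternative route, closer to literally ``particularizing Proposition~\ref{prop53}'', is to re-run its term-by-term BCH bookkeeping with the stronger vanishing above: this at once removes the $-\tfrac i2 d_{p,j}[M,E_{p,j}]$ contribution and lets the expansion proceed two orders further, while the conjugate-linear Lie-algebra automorphism $\tau$ given by coefficientwise conjugation --- which satisfies $\tau(M_h)=\mathrm{BCH}(\widetilde K,K)=\e^{-\mathrm{ad}_K}M_h$, hence $\Im M_h=-\tfrac i2\,\mathrm{ad}_K\!\bigl(\varphi_1(-\mathrm{ad}_K)M_h\bigr)\in\mathrm{Im}(\mathrm{ad}_M)+\mathcal O(h^{2p+2})$ --- pins down the structure of the imaginary part. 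The main obstacle is bookkeeping rather than ideas: one must be sure that \emph{all} the higher BCH brackets, not only $\tfrac12[K,\widetilde K]$, contribute only terms of the form allowed in \eqref{ecu.p1} and that everything else is $\mathcal O(h^{2p+2})$. This is not automatic --- already the degree-$4$ bracket produces a non-negligible term $\propto i\,\mathrm{ad}_M^3(\Im R)=\mathcal O(h^{p+4})$ once $p\ge 3$, so one cannot just stop after the first bracket. Packaging all the brackets into a single function of $\mathrm{ad}_{A_0}$, as in the diagonal-differential computation above (or via the $\tau$-identity), is what makes both the vanishing of the real $\mathrm{ad}_M$-terms and the $\mathcal O(h^{2p+2})$ truncation transparent.
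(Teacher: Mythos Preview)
Your argument is correct. The linearization of $\mathrm{BCH}$ at the diagonal via $D_1\mathrm{BCH}(S,S)=\tfrac{2}{\e^{\mathrm{ad}_S}+1}$ and $D_2\mathrm{BCH}(S,S)=\tfrac{2\e^{\mathrm{ad}_S}}{\e^{\mathrm{ad}_S}+1}$ is the key step, and the identity $\tfrac{2}{\e^z+1}(1+\e^z)=2$ is exactly what kills the real nested-$\mathrm{ad}_M$ terms (your $\omega$-terms) in one stroke; the $\mathcal O(h^{2p+2})$ remainder then follows by straight power counting since every quadratic-or-higher term in $(R,\overline R)$ picks up two factors of size $h^{p+1}$.

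This is a genuinely different route from the paper's. The paper simply specializes Proposition~\ref{prop53} by setting $d_{1,1}=0$ and $d_{p,j}=0$ in \eqref{structureK} and then asserts that the $\omega_{i,j}$ vanish because all the iterated BCH brackets $[K,\ldots,[K,\widetilde K]\ldots]$ are pure imaginary up to the relevant order. That assertion is of course correct --- each such bracket, modulo $\mathcal O(h^{2p+2})$, has all but one slot replaced by $\tfrac h2 M$ and the remaining slot equal to $[K,\widetilde K]\equiv -ih\,\mathrm{ad}_M(\Im R)$, hence is pure imaginary --- but it is left to the reader and is precisely the term-by-term bookkeeping you warn about in your last paragraph. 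Your ``alternative route'' is therefore the paper's proof; your main argument is a cleaner packaging of the same content: rather than checking bracket by bracket, you sum the whole tower into the single analytic function $-2\tanh(\mathrm{ad}_{A_0}/2)$, from which both the pure-imaginary structure and the factor of $\mathrm{ad}_M$ are manifest. The gain is a closed-form expression for the imaginary part (hence, implicitly, for the $\nu_{i,j}$) and a remainder estimate that requires no induction over BCH degree; the paper's version is shorter to state but relies on Proposition~\ref{prop53} already being in place.
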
  

\begin{proof}
One has to impose $d_{1,1}=0$ and $d_{p,j}=0$ in eq. \eqref{structureK}. Then, coefficients $\omega_{i,j}$ vanish, because all
the successive commutators $[K(h), \ldots, [K(h),\widetilde{K}(h)]\ldots]$ appearing in the BCH formula applied to $\e^{K(h)} \, \e^{\widetilde{K}(h)}$
contain only pure
imaginary terms up to order $\mathcal{O}(h^{2p+1})$.
\end{proof}
In consequence, if the problem \eqref{prm1} is defined in a Lie group $\mathcal{G}$, then the scheme $S_h = \Psi_h \, \widetilde{\Psi}_h$ of order $p$ is conjugate to a method that preserves the
Lie group structure up to order $2p+1$. This is so up to order $2p+2$ if $\Psi_h$ is symmetric-conjugate and the order $p$ is even, due
to the particular structure of $K_h$ in that case \cite{blanes22osc}. On the other hand, if $\Psi_h$ is symmetric-conjugate and its order $p$ is odd, 
then the resulting AC method is time-symmetric and of order $p+1$.

\subsection{New alternating-conjugate methods}
\label{sub.5.2}

The analysis of the previous subsection shows that, in addition to concatenating a given method $\Psi_h$ of order $p$ (say palindromic or symmetric-conjugate)
with the same scheme with complex conjugate coefficients, one can also get an alternating-conjugate method of order $p \ge 2$ by considering $\Psi_h$ as in Proposition
\ref{prop53}, namely by requiring the following order conditions:
\begin{equation} \label{orconac}
\begin{aligned}
   & \Re(k_{1,1}) = \frac{1}{2}, \qquad \Re(k_{p,j}) = 0; \quad j=1, \ldots, c(p)\\
   & k_{\ell,j} = 0, \qquad\quad  \ell =2, \ldots, p-1; \,\, j=1, \ldots, c(\ell),
 \end{aligned}
 \end{equation} 
 {where $k_{i,j}$ are as in \eqref{expreKh} and depend explicitly on the coefficients $\alpha_j$ of the composition \eqref{psiac}.}
 
The simplest AC method of order $p=2$ corresponds of course to the composition $S_h = \Phi_{\alpha h} \, \Phi_{\overline{\alpha} h}$ with $\Phi_h$ such that 
\[
 \Phi_{\alpha h} = \e^{K(h)} \qquad \mbox{ and } \qquad K(h) = \alpha h M + \alpha^2 h^2 Y_2 + \alpha^3 h^3 Y_3 + \cdots.
\]
By imposing $\Re(\alpha) = \frac{1}{2}, \Re(\alpha^2) = 0$ we get $\alpha = \frac{1}{2} (1\pm i )$, i.e., we recover method \eqref{slt1}. 

Analogously, for an AC method of order 3 within this family one has to take
$\Psi_h = \Phi_{\alpha_1 h} \, \Phi_{\alpha_2 h} \, \Phi_{\alpha_3 h}$ to satisfy the 5 required conditions \eqref{orconac}. {Taking into account eq. \eqref{compo_o1}, 
these are explicitly
\[
\begin{aligned}
  &  \Re(\alpha_1 + \alpha_2 + \alpha_3) = 1/2, \qquad  \Re( \alpha_1^3 + \alpha_2^3 + \alpha_3^3) = 0 \\
  & \Re \big( -\alpha_1^2 (\alpha_2 + \alpha_3) - \alpha_2^2 \alpha_3 + \alpha_1 (\alpha_2^2 + \alpha_3^2) + \alpha_2 \alpha_3^2 \big) = 0 \\
  & \Re( \alpha_1^2 + \alpha_2^2 + \alpha_3^2) = 0, \qquad \Im( \alpha_1^2 + \alpha_2^2 + \alpha_3^2) = 0.
\end{aligned}  
\]
}
Although there are solutions with $\alpha_3 \in \mathbb{R}$,
it is more efficient to consider $\Phi_h$ in the composition \eqref{psiac} as a 2nd-order time-symmetric method, namely
\begin{equation} \label{psiac2}
  \Psi_h = \Psi_{\alpha_1 h}^{[2]} \, \Psi_{\alpha_2 h}^{[2]} \, \cdots \, \Psi_{\alpha_r h}^{[2]}.
\end{equation}
Now the number of order conditions \eqref{orconac} to achieve a method of order 3, 4, 5, 6 is, respectively, 2, 4, 7 and 11. This is the strategy we follow next to construct higher order schemes with the minimum number of basic methods (or \emph{stages}).
We denote, for brevity, the whole AC method by its
sequence of coefficients:
\[
   S_h = (\alpha_1, \alpha_2, \ldots, \alpha_r, \overline{\alpha}_1, \overline{\alpha}_2, \ldots, \overline{\alpha}_r).
\]

\paragraph{Order 3.} The two order conditions, namely $ \Re(k_{1,1}) = \frac{1}{2}, \Re(k_{3,1})=0$ {given explicitly by \eqref{compo_o2},
can be already satisfied by taking $r=1$, i.e., 
\[
   \Re(\alpha_1) = \frac{1}{2},  \quad \Re(\alpha_1^3) = 0, \qquad \mbox{ leading to } \qquad \alpha_1 = \frac{1}{2} \pm i \frac{1}{2 \sqrt{3}}.
\]
}
In this way we recover the scheme \eqref{bc3}, which is both symmetric-conjugate and AC. Notice that if $\Psi_h^{[2]}$ is taken as the Strang splitting \eqref{strang} 
for two operators, then the number of exponentials is 5 instead of 12 with a composition of the Lie--Trotter scheme.

\paragraph{Order 4.} The 4 order conditions for the scheme $\Psi_h$, namely
\[
{ \Re(k_{1,1}) = \frac{1}{2}, \qquad \Re(k_{3,1}) = 0, \qquad \Im(k_{3,1}) = 0, \qquad \Re(k_{4,1}) = 0}
\]    
require taking $\Psi_h$ in \eqref{psiac2} as
\begin{equation} \label{nac4}
  \Psi_h = \Psi_{\alpha_1 h}^{[2]} \,  \Psi_{\alpha_2 h}^{[2]}
\end{equation}
and the only solutions are given by
\[
  (\alpha_1 = a, \, \alpha_2 = i \, \overline{a}), \quad \mbox{ and }  \quad (\alpha_1 =i \, \overline{a}, \, \alpha_2 = a), 
  \quad \mbox{ with } \quad 
  a = \frac{1}{4} \left(1 + \frac{1}{\sqrt{3}}\right) + i \, \frac{1}{4} \left(1 - \frac{1}{\sqrt{3}}\right)
\]
(together with the complex conjugate, of course). Notice that the resulting $S_h = (\alpha_1, \alpha_2, \overline{\alpha}_1, \overline{\alpha}_2)$ 
has the same number of stages as the scheme $S_{h}^{(1)}$ in \eqref{ac.1}. {This is the scheme depicted in Figure \ref{figure.5} as a green dashed line.}

\paragraph{Order 5.} {Now there are 7 order conditions that is required to solve: 
\[
  \Re(k_{1,1}) = \frac{1}{2}, \qquad \Re(k_{3,1}) = \Im(k_{3,1}) = \Re(k_{4,1}) = \Im(k_{4,1}) = 0, \qquad \Re(k_{5,1}) = \Re(k_{5,2}) = 0,
\]
so that we need four parameters, $\alpha_1, \ldots, \alpha_4$, or equivalently, a composition of the form  
\[
   \Psi_h = \Psi_{\alpha_1 h}^{[2]} \, \Psi_{\alpha_2 h}^{[2]} \, \Psi_{\alpha_3 h}^{[2]} \, \Psi_{\alpha_4 h}^{[2]}
\]
to satisfy all of them.} In fact, we can take one
parameter, say $\alpha_1$, as a real number. {In this case, one has to apply numerical techniques to solve the equations. Specifically, we use 
the function \texttt{fsolve} of the Python SciPy library, based on the classical Powell method \cite{virtanen20sci}.

After an exhaustive search, 16 distinct solutions (including their complex conjugates) that satisfy the imposed symmetry have been found. Among them, we have selected the one for which the sum of the absolute values of its coefficients is minimal:}
\[
\begin{array}{l}
  \alpha_1 = 0.13073364974455472155, \\ 
   \alpha_2 = 0.10154067971150062704 +  0.13578392847671735429 \, i, \\
  \alpha_3 =  0.16195992616393787750 -   0.05016739165848310348 \, i, \\
  \alpha_4 =  0.10576574438000677391 +  0.07684331129821891226 \, i.
\end{array}  
\]
In this way, the final AC composition reads
\[
  S_h = (\alpha_1, \alpha_2, \alpha_3, \alpha_4, \alpha_1, \overline{\alpha}_2,  \overline{\alpha}_3,  \overline{\alpha}_4).
\]  

\paragraph{Order 6.} A method $\Psi_h$ verifying the required 11 order conditions involves at least 6 stages, although one of the coefficients can be taken as
a real number. {By applying the same procedure as in the preceding case, we have found 101 distinct solutions, along with their complex conjugates, and the one with minimal value of the 
sum of the absolute values of its coefficients is}
\[
\begin{array}{l}
  \alpha_1 = 0.051834036182240306862, \\ 
   \alpha_2 = 0.075584762328805037429 + 0.068952097954972525370 \, i, \\
  \alpha_3 =  0.126191199798221549793 - 0.022451017530352466819 \, i, \\
  \alpha_4 =  0.067883683573696296147 - 0.098039677222465976320 \, i, \\
  \alpha_5 = 0.099243916328147654969 + 0.049312230362166446543 \, i, \\
  \alpha_6 = 0.079262401788889154800 - 0.041953102069126791785 \, i.
\end{array}  
\]
The final AC method reads
\begin{equation} \label{nac6}
  S_h = (\alpha_1, \ldots, \alpha_6, \overline{\alpha}_1, \ldots,  \overline{\alpha}_6).
\end{equation}

\medskip

Since there are several ways to construct AC compositions, it is relevant at this point to compare them based on the number of stages involved in each
type of composition, {which is determined by the number of order conditions. We should notice, however, that an actual method may require more stages than
order conditions, as shown by the previous 5th- and 6th-order alternating-conjugate schemes. This is also the case if the order conditions do not have solutions
with the minimum number of parameters.}

{We collect in Table \ref{table.1} the  results up to order 8 for different types of compositions of a 2nd-order time-symmetric method.
In particular, $\mathrm{P}$ refers to palindromic compositions (hence only even orders are considered), and the collected numbers have been obtained a number
of times in the literature (see e.g. \cite{blanes24smf}).  $\mathrm{SC}$ denote symmetric-conjugate compositions of a 2nd-order time-symmetric basic scheme,
already analyzed in \cite{blanes22osc}. The symbols 
$\mathrm{P}$--$\widetilde{\mathrm{P}}$ and $\mathrm{SC}$--$\widetilde{\mathrm{SC}}$ refer to alternating-conjugate methods obtained taking $\Psi_h$
as a palindromic and a symmetric-conjugate composition, respectively. Notice that both of them are palindromic (and hence time-symmetric) by construction. This
fact also contributes to reduce the number in the $\mathrm{SC}$--$\widetilde{\mathrm{SC}}$ case. Finally, the column denoted as $\mathrm{AC}$ is obtained
by counting the number of order conditions \eqref{orconac} rounded to the next even number.} 

It is clear that, taking only into consideration the computational cost required by
each type of method, $\mathrm{SC}$--$\widetilde{\mathrm{SC}}$ compositions are preferred for orders 6 and 8. To get efficient methods within this class one
must then build good SC schemes of orders 5 and 7 involving the minimum number of stages. The coefficients of two such integrators are collected in Table \ref{table.2}, {and they have been obtained by applying the same methodology as for the previous 5th- and 6th-order AC methods.}

\begin{table}[ht]
\begin{center}
    \renewcommand\arraystretch{1.1}
\begin{tabular}{|c|c|c||c|c|c|}\hline
 \multicolumn{6}{|c|}{\bfseries Minimum number of stages}\\ \hline
 Order & $\mathrm{P}$ & $\mathrm{SC}$ & $\mathrm{P}$--$\widetilde{\mathrm{P}}$ & $\mathrm{SC}$--$\widetilde{\mathrm{SC}}$  & $\mathrm{AC}$ \\ \hline
  3  &   & 2 &  &  &  2 \\
  4 & 3 & 3 & 6 & 4 & 4  \\
  5 &  & 5 &  & & 8  \\
  6 & 7 & 7 & 14 & 10 & 12  \\ 
  7 &  & 11 &  &  & 18  \\ 
  8 & 15 & 16 & 30 & 22 & 26  \\ \hline
\end{tabular}  
\end{center}
\caption{\label{table.1} \small Minimum number of stages required by each type of composition of a 2nd order time-symmetric basic method to achieve a given order. $\mathrm{P}$
stands for palindromic; $\mathrm{SC}$ for symmetric-conjugate; $\mathrm{P}$--$\widetilde{\mathrm{P}}$ refers to an alternating-conjugate method obtained
by concatenating 2 palindromic compositions with complex conjugate coefficients; $\mathrm{SC}$--$\widetilde{\mathrm{SC}}$ is an alternating-conjugate
method obtained from symmetric-conjugate compositions, and $\mathrm{AC}$ are direct alternating-conjugate methods. Note that $\mathrm{P}$--$\widetilde{\mathrm{P}}$, $\mathrm{SC}$--$\widetilde{\mathrm{SC}}$ and $\mathrm{AC}$ are all alternating-conjugate schemes.}
\end{table}

\begin{table}[ht]
  \begin{center}
    \renewcommand\arraystretch{1.1}
    \begin{tabular}{lll}
      \multicolumn{3}{c}{$\Psi_{h,sc}^{[5]} = \Psi_{\alpha_1 h}^{[2]} \, \Psi_{\alpha_2 h}^{[2]} \, \Psi_{\alpha_3 h}^{[2]} \, \Psi_{\overline{\alpha}_2 h}^{[2]} \, 
         \Psi_{\overline{\alpha}_1 h}^{[2]}$}\\
      \hline 
     $\Re( \alpha_1) = 0.17526840907207411405$ & \quad $\Im(\alpha_1) = 0.05761474413053870201$ \\
      $\Re( \alpha_2) = 0.18487368019298416043$ & \quad $\Im(\alpha_2) = - 0.19412192275724958851$ \\
      $\alpha_3 = 0.27971582146988345102$ &  \\
        \hline 
                                    & & \\                                    
      \multicolumn{3}{c}{$\Psi_{h,sc}^{[7]} = \Psi_{\alpha_1 h}^{[2]} \, \cdots \Psi_{\alpha_5 h}^{[2]} \, \Psi_{\alpha_6 h}^{[2]} \, 
      \Psi_{\overline{\alpha}_5 h}^{[2]} \, \cdots \, \Psi_{\overline{\alpha}_1 h}^{[2]}$}\\
      \hline 
      $\Re( \alpha_1) = 0.05211820743645156337$ & \quad $\Im(\alpha_1) = -0.05814624289751311388$ \\
      $\Re( \alpha_2) = 0.10923197827620526541$ & \quad $\Im(\alpha_2) = 0.02935068872383690377$ \\
      $\Re( \alpha_3) = 0.09943629453321852209$ & \quad $\Im(\alpha_3) = -0.06231578289901792940$ \\
      $\Re( \alpha_4) = 0.08136441998830503070$ & \quad $\Im(\alpha_4) = 0.11683729387729571634$ \\
      $\Re( \alpha_5) = 0.14644914726793223517$ & \quad $\Im(\alpha_5) = 0.04299436701496493366$ \\
      $\alpha_6 = 0.02279990499577476650$ &  \\
      \hline 
    \end{tabular}
   \end{center}        
  \caption{\label{table.2} 
  \small{Coefficients of symmetric-conjugate methods of order 5 and 7 obtained as compositions of a basic 2nd order time-symmetric basic scheme with
  the minimum number of stages. They can be used to construct alternating-conjugate methods of order 6 and 8, respectively.}}
\end{table}

\subsection{Numerical examples}
\label{sec.5.3}

While in the previous sections we have analyzed the main qualitative properties of alternating-conjugate numerical schemes, our purpose here is
to illustrate the efficiency of the new integrators on practical computations involving some of the linear differential systems considered before.

\paragraph{Linear Hamiltonian systems.} 
We first consider a Hamiltonian system \eqref{Ham.1} described by a matrix $H$ of the form \eqref{eq:conj_int} with 5 frequencies $\omega_j > 0$ chosen as
random numbers, $0.2077 < \omega_j < 0.8443$, and split it into two parts $H = A + B$, {as in the previous examples}. 
We also take a random initial condition $y_0 \in \mathbb{R}^{10}$, integrate the system until the final time $t_f = 60$
and compute the error in the preservation of the energy $\mathcal{H}(y) = y^T H y /2$ at the final time for different step sizes $h$. Finally, we depict this
error as a function of the number of basic {2nd-order} maps $\Psi_h^{[2]}$ evaluated by each numerical integrator. 
In this way we get the graph of Figure \ref{figure.6} (left). {Lines coded as AC-4 and AC-6 correspond to AC methods \eqref{nac4} and \eqref{nac6}, of order 4 and 6, respectively, whereas lines denoted as P-4, P-6 and P-8} are obtained by the most efficient 4th-, 6th-
and 8th-order
palindromic compositions with real coefficients involving, respectively, 5, 13 and 21 symmetric 2nd-order maps $\Psi_h^{[2]}$ per integration step (see \cite{blanes24smf}, Table 8.1). 

Notice that, whereas the slope of each curve clearly exhibits the order of the corresponding method, there are significant differences between schemes with 
real and complex coefficients with respect to stability, with the former one having a smaller threshold value $h^*$ for the step size, so that for $h > h^*$ the methods
are unstable. In this sense, the behavior of the new AC scheme \eqref{nac6} is particularly remarkable.
Different values for the frequencies and initial conditions provide of course different results, but the observed pattern is quite similar. We should mention, however,
that the extra cost due to the use of complex arithmetic is not taken into account in this figure. Alternating-conjugate methods of order 6 and 8 obtained
by concatenating the schemes of Table \ref{table.2} provide less efficient results in all cases.

\begin{figure}[!ht] 
\centering
  \includegraphics[width=.49\textwidth]{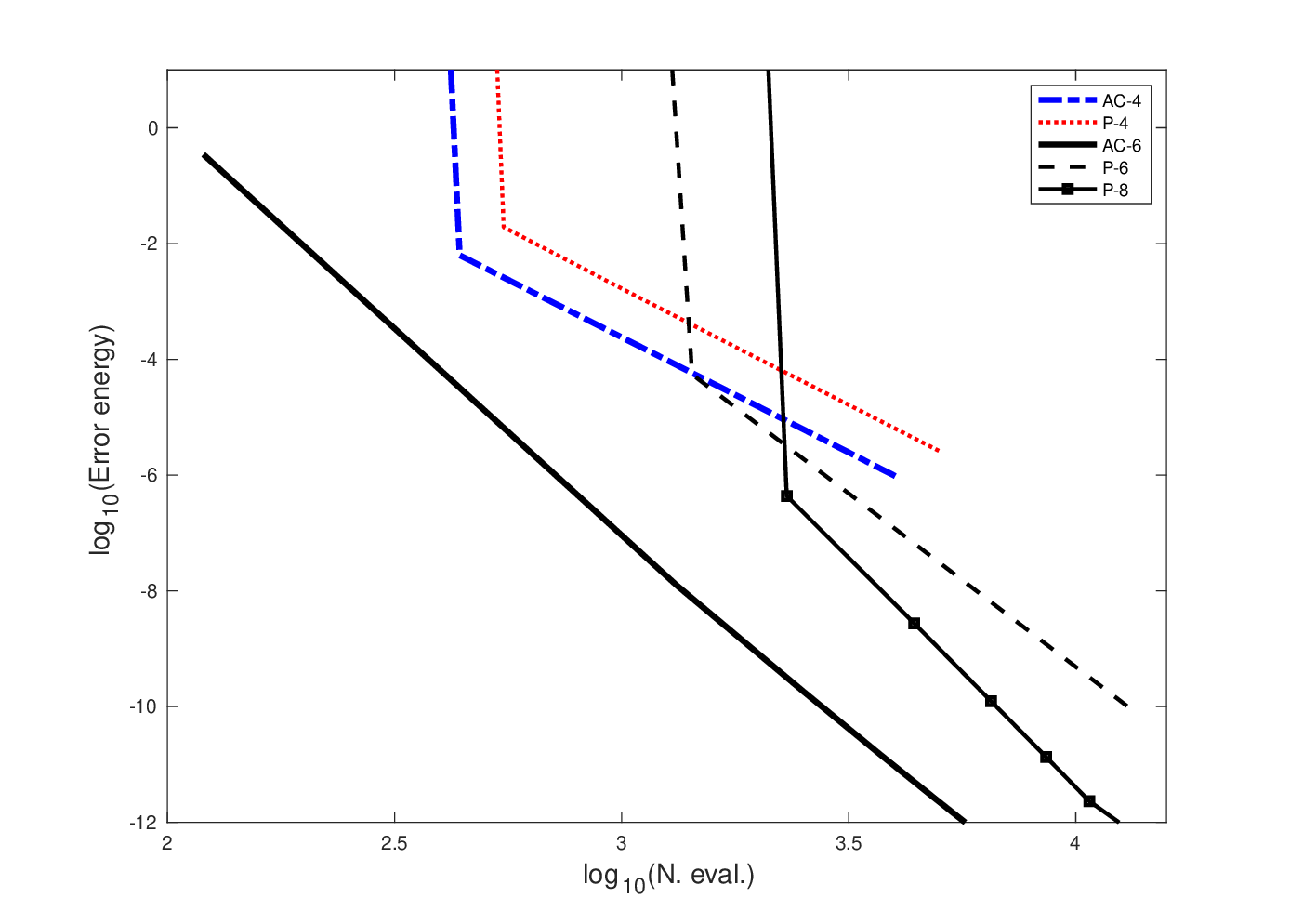}
  \includegraphics[width=.49\textwidth]{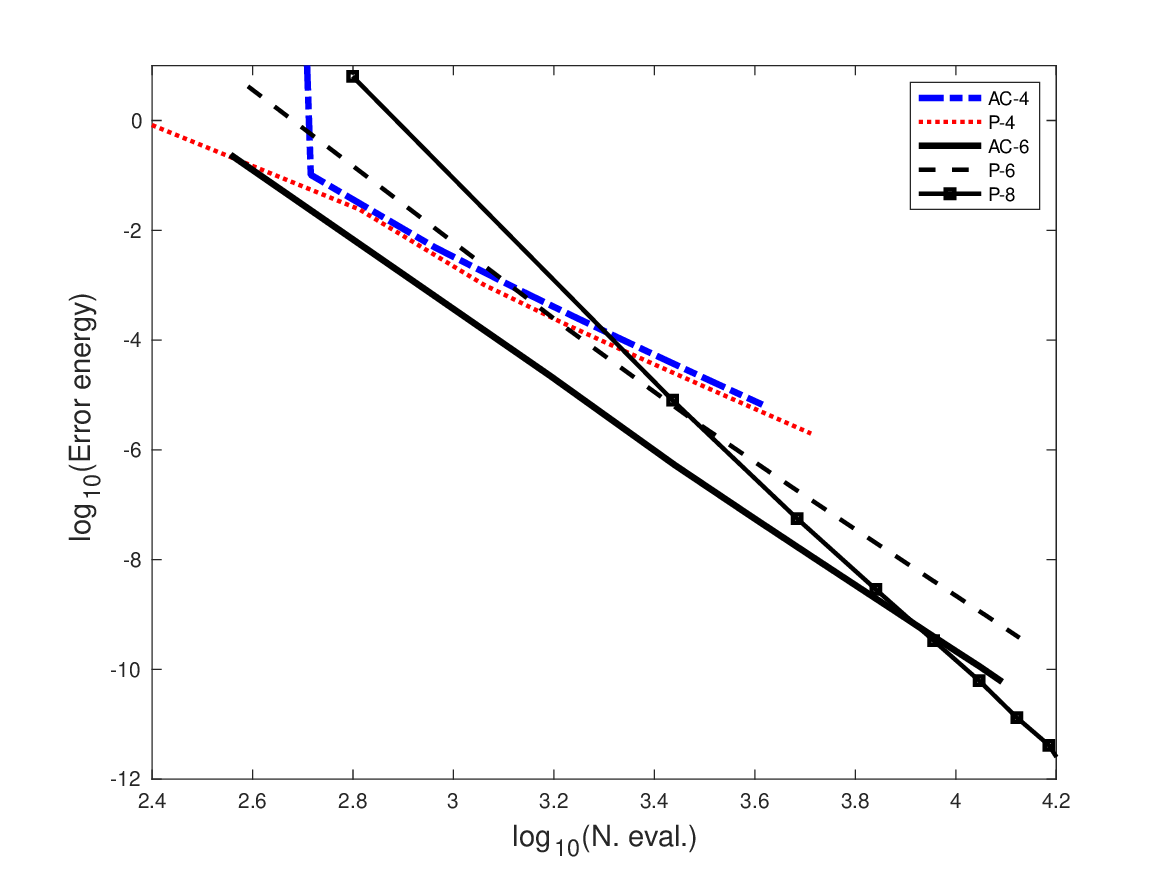}
\caption{\label{figure.6} \small Error in energy vs. number of evaluations of the basic Strang splitting $\Psi_h^{[2]}$ for a linear real Hamiltonian system (left) and a linear
unitary flow (right) of dimension $N=10$. {Lines AC-4 and AC-6} correspond to the new AC methods \eqref{nac4} and \eqref{nac6}, of order 4 and 6. 
{Lines P-4, P-6 and P-8}
correspond to palindromic compositions with real coefficients of order 4, 6 and 8, recommended in \cite{blanes24smf} for their superior efficiency.}
\end{figure}

\paragraph{Unitary flows.}

We check next the same methods in the linear unitary system 
\[
     \dot{u} = i H u, \qquad u(0) = u_0,
\]     
where $H$ is taken as a $10 \times 10$ complex Hermitian matrix and $u_0 \in \mathbb{C}^{10}$, both chosen at random. Again, we split $H$ into two parts and take the
Strang splitting as the basic scheme. We integrate until a final time
$t_f = 40$ and check the preservation of the energy $\mathcal{H}(u) = \overline{u}^T H u$ as before. The results are depicted in Figure \ref{figure.6} (right),
{where the same notation has been used for the methods}.
Notice that in this case, all methods require using complex arithmetic, so there is no overcost for the new alternating-conjugate schemes. Although, as we have
seen before, stability is an issue in this case when using complex coefficients, in practice the threshold value $h^*$ is quite large, specially for method \eqref{nac6},
which shows the best efficiency for all values of $h$ tested. Observe that it also compares favorably with the most efficient standard 8th-order scheme with
real coefficients on a wide range of values of $h$.

\section{Concluding remarks}
\label{sec.6}

In this work we have shown that, {concatenating a given splitting method involving complex coefficients $\alpha_j$ with the same scheme but with the complex conjugate
coefficients $\overline{\alpha}_j$}, results in a new (alternating-conjugate) scheme with good long time behavior and favorable preservation properties when applied to several classes of linear problems. 
This analysis shows, in addition, that it is possible to get alternating-conjugate composition methods of order $p$ directly, without requiring 
concatenating methods of the same order $p$. Some of the new schemes thus constructed exhibit good efficiency for some problems in comparison with standard composition methods
with real coefficients.

{
Although only linear problems have been considered here, one may wonder what happens when this class of composition schemes is applied for the numerical
integration of nonlinear systems of equations. There are several considerations to be made here. First, the vector field defining
the system of differential equations has to be analytic, at least in the part of the complex plane containing the path where the integration is carried out. Otherwise,
order reductions take place unless a proper implementation is adopted, as e.g. in \cite{casas21cop} for the semi-linear complex Ginzburg--Landau equation. Second,
the analysis of section \ref{sub.5.1} is still valid when the nonlinear problem is formulated in terms of the Lie formalism. Thus, as long as the problem is defined in a certain Lie group (for instance, a Hamiltonian system), 
an AC scheme of order $p$ is conjugate to a method that preserves the Lie group structure up to order $2p+1$, and thus one still may expect preservation of
geometric structures up to higher orders than the order of the method itself. 

Let us focus on a (nonlinear) Hamiltonian system $H(q,p)$. When using a numerical method with complex coefficients, the resulting symplectic map corresponds
(via backward error analysis \cite{hairer06gni}) to a modified equation that remains Hamiltonian, though now governed by a complex Hamiltonian function that is formally preserved. Numerical experiments demonstrate that the original Hamiltonian $H(q,p)$
 is still well preserved over time. However, to obtain rigorous results on the long-term preservation of first integrals, it is crucial that the imaginary parts of both coordinates and momenta remain sufficiently small. Otherwise, significant instabilities may arise.
 
 Although the numerical flow generated by the AC scheme is conjugate to a real Hamiltonian system up to order $r > p$
 this does not guarantee that $(\Im(q), \Im(p))$ remains small over timescales of order $h^{-r}$. In fact, numerical results reveal a clear correlation between the magnitude of  $(\Im(q), \Im(p))$ and the preservation of invariants. Moreover, the specific region of phase space from which initial conditions are selected also significantly influences the observed behavior. Overall, these complexities raise several important issues that merit further investigation, in any case
 well beyond the scope of this paper.
 }

Even when one has only two operators, $m=2$ in the linear equation \eqref{prm1}, deserves a separated analysis, given its potential applications, in particular  to the
time-dependent Schr\"odinger equation. In that situation, the modified vector field associated with
the map \eqref{gen.split} is of the form \eqref{expreKh}, with the same dimensions $c(\ell)$ involved for $\ell \ge 2$, whereas $c(1) = 2$. In consequence, the analysis
of subsection \ref{sub.5.1} is valid for the corresponding alternating-conjugate schemes with minor modifications. 
It might be the case, however, that the new integrators obtained by considering the splitting
\eqref{gen.split}  differ from those collected in subsection \ref{sub.5.2} in terms of efficiency. This issue will be analyzed elsewhere.

\subsection*{Acknowledgements}
The work of JB is supported by ANR-22-CE40-0016 ``KEN" of the Agence Nationale de la Recherche (France) and by the region Pays de la 
Loire (France) through the project ``MasCan". SB, FC and AE-T acknowledge financial support by 
Ministerio de Ciencia e Innovaci\'on (Spain) through project  PID2022-136585NB-C21, 
MCIN/AEI/10.13039/501100011033/FEDER, UE.

\subsection*{Compliance with Ethical Standards}

All authors declare that they have no conflicts of interest.

\appendix

\section{Appendix: multiple eigenvalues}
In this appendix we state an extension of  Theorem \ref{thm:unitary} allowing the matrix $H$ to have repeated eigenvalues at the cost of an extra spectral assumption on the leading error term. 
The arguments to get this generalization from Theorem \ref{thm:unitary} are exactly the same as those presented in \cite{bernier23scs} to pass from Theorem 3.1 to Theorem 3.3, so we omit the proof.
\begin{theorem}
\label{thm:refin}
Let  $H \in \mathbb{C}^{N\times N}$ be a complex matrix and let $S_\ts \in \mathbb{C}^{N\times N}$ be a family of invertible complex matrices depending smoothly on $\ts\in \mathbb{R}$ such that
\begin{itemize}
\item the spectrum of $S_h$ satisfies $\sigma(S_\ts) =\overline{\mathrm{inv}(\sigma(S_\ts))}$
\item $S_\ts$ is consistent with $\exp(i\ts H)$, i.e. 
$$
\displaystyle S_\ts \mathop{=}_{\ts\to 0} \exp \big(  i\ts H  + i \ts^{p+1}  R+ \mathcal{O}(\ts^{p+q+1}) \big),
$$
where $p\geq 1$ is the consistency order, $q\geq 1$ and $R$ is a complex matrix,
\item  $H$ is diagonalizable and its eigenvalues are real,
\item For all $\omega \in \sigma(H)$, the eigenvalues of $\Pi_\omega R_{| E_\omega(H)}$ are real and simple, where $\Pi_\omega$ denotes the spectral projector onto $E_\omega (H):= \mathrm{Ker}(H-\omega I_N) $.
\end{itemize}
Then there exist 
\begin{itemize}
\item $D_\ts$,  a family of real diagonal matrices depending smoothly on $\ts$,
\item $P_\ts$, a family of complex invertible matrices depending smoothly on $\ts$,
\end{itemize}
such that $P_\ts = P_0 + \mathcal{O}(\ts^q)$, $D_\ts = D_0 + \mathcal{O}(\ts^q)$, both $P_0^{-1} B P_0$ and $P_0^{-1} H P_0$ are diagonal, where $B := \sum_{\omega \in \sigma(H)} \Pi_\omega \, R \, \Pi_\omega$  and, provided that $|\ts|$ is small enough, 
\begin{equation}
\label{eq:what_we_want_refin}
S_\ts = P_\ts \, \e^{i\ts D_\ts} \, P_\ts^{-1}.
\end{equation}
\end{theorem}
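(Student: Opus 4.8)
The plan is to reproduce the architecture of the proof of Theorem~\ref{thm:unitary}, with the normal-form reduction now performed at \emph{two scales}: at scale $1$ one separates the eigenspaces of $H$, and the degeneracy that remains inside each eigenspace is resolved at the finer scale $\ts^p$, which is exactly where the spectral hypothesis on the leading error term $R$ comes in. First I would run the usual \emph{backward error analysis}: for $|\ts|$ small, set $H_\ts := (i\ts)^{-1}\log S_\ts$ through holomorphic functional calculus, so that $S_\ts = \e^{i\ts H_\ts}$ with $H_\ts$ smooth in $\ts$; the refined consistency hypothesis then gives $H_\ts = H + \ts^p R_\ts$ with $R_\ts$ smooth and $R_\ts = R + \mathcal{O}(\ts^q)$, while the spectral symmetry $\sigma(S_\ts) = \overline{\mathrm{inv}(\sigma(S_\ts))}$ transfers to $\overline{\sigma(H_\ts)} = \sigma(H_\ts)$ by the computation already made in \eqref{eq:spectrum_conj}. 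Conjugating everything by a fixed invertible matrix, I may also assume that $H$ is diagonal and that $B = \sum_{\omega\in\sigma(H)}\Pi_\omega R\Pi_\omega$ is diagonal: one diagonalises $H$ and then, block by block, diagonalises $\Pi_\omega R|_{E_\omega(H)}$, which is possible precisely because, by hypothesis, this operator has simple eigenvalues. This fixed change of basis is the matrix $P_0$ of the statement, and by construction both $P_0^{-1}HP_0$ and $P_0^{-1}BP_0$ are diagonal.

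Then comes the \emph{coarse normal form}: applying Lemma~\ref{lem:diag_cont} with $\mathfrak{g}=\mathfrak{gl}_N(\mathbb{C})$, $M_0=H$ and $M_\ts=H_\ts$ produces a smooth family $\chi_\ts$ such that $\widetilde{H}_\ts:=\e^{\ts^p\chi_\ts}H_\ts\,\e^{-\ts^p\chi_\ts}$ commutes with $H$; since $H$ is diagonal with distinct eigenvalues on distinct blocks, $\widetilde{H}_\ts$ is block-diagonal with respect to the eigenspaces $E_\omega(H)$. Computing the leading correction — using $R_\ts = R+\mathcal{O}(\ts^q)$ and the defining relation of $\chi_0$, which forces $R+[\chi_0,H]$ to be the $\ker\,\mathrm{ad}_H$-component of $R$, i.e. $B$ — one gets $\widetilde{H}_\ts = H + \ts^p B + \mathcal{O}(\ts^{p+\min(p,q)})$. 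The \emph{fine normal form} is carried out inside each block: writing $\widetilde{H}_\ts = \omega I + \ts^p\widetilde{R}_\ts$ on $E_\omega(H)$, the family $\widetilde{R}_\ts$ is smooth with $\widetilde{R}_0 = \Pi_\omega B\Pi_\omega|_{E_\omega(H)}$ diagonal and with simple eigenvalues, so a second application of Lemma~\ref{lem:diag_cont}, now inside each block and at scale $\ts^{\min(p,q)}$, yields a smooth block-diagonal $Q_\ts = I + \mathcal{O}(\ts^{\min(p,q)})$ with $Q_\ts\widetilde{H}_\ts Q_\ts^{-1}=:D_\ts$ diagonal. Setting $P_\ts := P_0\,\e^{-\ts^p\chi_\ts}Q_\ts^{-1}$ gives $S_\ts = P_\ts\,\e^{i\ts D_\ts}P_\ts^{-1}$ with $P_\ts = P_0 + \mathcal{O}(\ts^{\min(p,q)})$ and $D_\ts = D_0 + \mathcal{O}(\ts^{\min(p,q)})$; iterating this coarse/fine procedure upgrades the remainders to the full $\mathcal{O}(\ts^q)$.

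It remains to prove that the diagonal entries of $D_\ts$ are real, which I would do exactly as in the passage from Theorem~3.1 to Theorem~3.3 of \cite{bernier23scs}. The entries of $D_\ts$ are the eigenvalues of $H_\ts$, hence form a conjugation-symmetric set by the first step; those lying near a given $\omega\in\sigma(H)$ have the form $\omega + \ts^p\mu$ with $\mu$ at distance $\mathcal{O}(\ts^{\min(p,q)})$ from a real eigenvalue of $\Pi_\omega R|_{E_\omega(H)}$, so their imaginary part is $\mathcal{O}(\ts^{p+\min(p,q)})$, while two distinct such entries attached to the same $\omega$ differ by $\gtrsim\ts^p$ (simplicity) and entries attached to different eigenvalues of $H$ differ by $\gtrsim 1$. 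Consequently, for $|\ts|$ small, the complex conjugate of an entry is too close to that entry to be any other point of the (conjugation-symmetric) spectrum, so it must coincide with it, i.e. the entry is real.

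The main obstacle is precisely the combination of this \emph{two-scale} reduction and the reality argument. In contrast with Theorem~\ref{thm:unitary}, a single application of the normal-form lemma with $M_0=H$ only makes $H_\ts$ \emph{block}-diagonal (since $\ker\,\mathrm{ad}_H$ consists of all matrices preserving the eigenspaces of $H$), so one is forced to descend to the scale $\ts^p$ fixed by the leading error term $R$ to remove the remaining degeneracy, and one must keep track of the orders along both reductions. The spectral hypothesis on $\Pi_\omega R|_{E_\omega(H)}$ is used twice: its \emph{simplicity} is what makes the second normal-form step — and the block-wise diagonalisation of $B$ — possible, while its \emph{reality} is what forces $D_\ts$ to be real. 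When the eigenvalues of $H$ are already simple this hypothesis is automatic, since then $\Pi_\omega R|_{E_\omega(H)}$ is a scalar whose reality follows from the spectral symmetry of $S_\ts$ exactly as in the final step of the proof of Theorem~\ref{thm:unitary}; this is why Theorem~\ref{thm:refin} genuinely extends that result.
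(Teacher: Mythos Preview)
Your proposal is correct and follows precisely the route the paper invokes: the paper omits the proof entirely, stating only that ``the arguments to get this generalization from Theorem~\ref{thm:unitary} are exactly the same as those presented in \cite{bernier23scs} to pass from Theorem~3.1 to Theorem~3.3''. The two-scale architecture you describe --- backward error analysis giving $H_\ts = H + \ts^p R_\ts$ with conjugation-symmetric spectrum, a coarse application of Lemma~\ref{lem:diag_cont} to block-diagonalise with respect to the eigenspaces of $H$, then a fine application within each block using the simplicity of $\Pi_\omega R|_{E_\omega(H)}$, followed by the two-scale reality argument --- is exactly that strategy.

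One small remark: your sentence ``iterating this coarse/fine procedure upgrades the remainders to the full $\mathcal{O}(\ts^q)$'' is vague and, as written, not quite right. When $q\le p$ your single pass already gives $\min(p,q)=q$ and no iteration is needed; when $q>p$ the diagonal entries of $D_\ts$ are the eigenvalues of $H_\ts$ and hence differ from those of $H$ by order $\ts^p$, so no iteration can improve $D_\ts-D_0$ beyond $\mathcal{O}(\ts^p)$. In practice (and in the applications of the paper) one has $q\le p$, which is the regime in which the stated bounds hold; you should make that explicit rather than appeal to an unspecified iteration.
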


\begin{remark}
The extra spectral assumption on $R$ is equivalent to ask that, provided that $|\ts|$ is small enough, the eigenvalues of $H+h^p R$ are real and simple.
That this is typically the case for alternating-conjugate methods follows readily from the analysis of subsection \ref{sub.5.1}, and explains the numerical results
collected in section \ref{sec.2}.
\end{remark}

By the same arguments as \cite[Corollary 3.4.]{bernier23scs}, we deduce, as previously, a corollary about the preservation of the super-actions, the mass and the energy. 
\begin{corollary} \label{cor:refin}
 In the setting of Theorem \ref{thm:refin}, if moreover $H$ and $R$ are Hermitian, there exists a constant $C>0$ such that, provided that $|\ts|$ is small enough,  for all $u\in \mathbb{C}^N$ and all $\omega \in \sigma(H)$, one gets
$$
 \sup_{n\geq 0} \, \Big||\Pi_\omega S_\ts^{n} u|^2 - |\Pi_\omega  u|^2 \Big| \leq C |\ts|^q |u|^2,
$$
and the mass and the energy are almost conserved, i.e. for all $u\in \mathbb{C}^N$, it holds that
\[
  \sup_{n \in \mathbb{Z}} \,  \big| \mathcal{M}(S_\ts^n u) - \mathcal{M}( u)  \big|\leq C |\ts|^q |u|^2  \qquad \mathrm{ and } \qquad 
  \sup_{n \in \mathbb{Z}} \, \big| \mathcal{H}(S_\ts^n u) - \mathcal{H}( u)  \big| \leq C |\ts|^q |u|^2, 
\]
where, as before, $\mathcal{M}( u) = |u|^2 $ and $\mathcal{H}( u) = \overline{u}^T H u $.
\end{corollary}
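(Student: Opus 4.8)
The plan is to deduce the estimates from the structural factorization $S_\ts = P_\ts\,\e^{i\ts D_\ts}\,P_\ts^{-1}$ of Theorem \ref{thm:refin} (with $D_\ts$ a real diagonal matrix and $P_\ts = P_0 + \mathcal{O}(\ts^q)$), by producing modified super-actions that are \emph{exactly} conserved by the numerical flow and differ from the physical super-actions $u\mapsto|\Pi_\omega u|^2$ only by $\mathcal{O}(\ts^q)$, in the same spirit as the treatment of the actions in the proof of Corollary \ref{cor:nice}. The one ingredient specific to the Hermitian setting is a preliminary normalisation of $P_0$. Indeed, when $H$ and $R$ are Hermitian the matrix $B = \sum_{\omega\in\sigma(H)}\Pi_\omega R\,\Pi_\omega$ is Hermitian, commutes with $H$, and --- thanks to the spectral hypothesis of Theorem \ref{thm:refin}, which makes $B$ simple on each eigenspace of $H$ --- the pair $(H,B)$ has pairwise distinct joint eigenvalues, hence an orthonormal joint eigenbasis; since the columns of $P_0$ are joint eigenvectors of $(H,B)$, we have $P_0 = U\Lambda$ with $U$ unitary and $\Lambda$ invertible diagonal, and replacing $P_\ts$ by $P_\ts\Lambda^{-1}$ --- which affects neither $S_\ts$, because $\Lambda$ commutes with the diagonal $D_\ts$, nor the expansion $P_\ts = P_0 + \mathcal{O}(\ts^q)$ --- I may assume that $P_0$ is unitary. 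Writing $E_\omega$ for the coordinate projector onto the indices where $D_0 = P_0^{-1}HP_0$ equals $\omega$, this gives $\Pi_\omega = P_0 E_\omega P_0^{-1}$ and, $P_0$ being unitary, $|\Pi_\omega u| = |E_\omega P_0^{-1}u|$; moreover $S_\ts^n = P_\ts\,\e^{in\ts D_\ts}\,P_\ts^{-1}$ with $\e^{in\ts D_\ts}$ unitary yields the uniform bound $|S_\ts^n u|\leq C|u|$ for all $n\in\Z$.

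Next I would introduce, for each $\omega\in\sigma(H)$, the modified super-action $\mathcal{J}_{\omega,\ts}(u):=|E_\omega P_\ts^{-1}u|^2$. Because $E_\omega$ and $\e^{in\ts D_\ts}$ are both diagonal they commute, and $\e^{in\ts D_\ts}$ is unitary, so
$$
\mathcal{J}_{\omega,\ts}(S_\ts^n u) = \big|E_\omega\,\e^{in\ts D_\ts}\,P_\ts^{-1}u\big|^2 = \big|\e^{in\ts D_\ts}\,E_\omega P_\ts^{-1}u\big|^2 = \big|E_\omega P_\ts^{-1}u\big|^2 = \mathcal{J}_{\omega,\ts}(u)
$$
for all $n\in\Z$: the modified super-action is exactly preserved by $S_\ts$. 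On the other hand, inverting the expansion $P_\ts = P_0 + \mathcal{O}(\ts^q)$ gives $E_\omega P_\ts^{-1} = E_\omega P_0^{-1} + \mathcal{O}(\ts^q)$, hence $\big|\,|E_\omega P_\ts^{-1}u| - |\Pi_\omega u|\,\big|\leq C\ts^q|u|$, and, both quantities being $\leq C|u|$, the elementary inequality $|a^2-b^2|\leq|a-b|(|a|+|b|)$ gives $\big|\mathcal{J}_{\omega,\ts}(u) - |\Pi_\omega u|^2\big|\leq C\ts^q|u|^2$.

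Combining these two facts with the triangle inequality and the uniform bound $|S_\ts^n u|\leq C|u|$,
$$
\Big|\,|\Pi_\omega S_\ts^n u|^2 - |\Pi_\omega u|^2\,\Big| \leq \Big|\,|\Pi_\omega S_\ts^n u|^2 - \mathcal{J}_{\omega,\ts}(S_\ts^n u)\,\Big| + \Big|\,\mathcal{J}_{\omega,\ts}(u) - |\Pi_\omega u|^2\,\Big| \leq C\ts^q\big(|S_\ts^n u|^2 + |u|^2\big)\leq C\ts^q|u|^2
$$
for all $n\in\Z$, which contains the first estimate of the corollary. The mass and energy statements then follow by summation over the finitely many eigenvalues: since $H$ is Hermitian, $\mathcal{M}(u)=|u|^2=\sum_{\omega\in\sigma(H)}|\Pi_\omega u|^2$ and $\mathcal{H}(u)=\overline{u}^T H u=\sum_{\omega\in\sigma(H)}\omega\,|\Pi_\omega u|^2$, whence $|\mathcal{M}(S_\ts^n u)-\mathcal{M}(u)|$ and $|\mathcal{H}(S_\ts^n u)-\mathcal{H}(u)|$ are each bounded by $\big(\sum_{\omega}(1+|\omega|)\big)\,C\ts^q|u|^2 = C'\ts^q|u|^2$.

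Apart from these routine manipulations, the one step I expect to need care is the normalisation making $P_0$ unitary: it is precisely this that turns the \emph{exactly} conserved quantity $\mathcal{J}_{\omega,\ts}$ into something $\mathcal{O}(\ts^q)$-close to the physical super-action $|\Pi_\omega u|^2$ --- without it, $\mathcal{J}_{\omega,\ts}$ would differ from $|\Pi_\omega u|^2$ by a fixed, step-size-independent rescaling and the argument would collapse. Beyond that I anticipate no obstacle, consistent with the paper's remark that the proof parallels that of \cite[Corollary 3.4.]{bernier23scs}.
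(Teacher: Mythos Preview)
Your proposal is correct and follows essentially the same route the paper points to: the paper gives no proof here beyond the remark that the argument is that of \cite[Corollary 3.4]{bernier23scs}, and your construction of exactly preserved modified super-actions $\mathcal{J}_{\omega,\ts}$ that are $\mathcal{O}(\ts^q)$-close to $|\Pi_\omega u|^2$ is precisely that strategy (and mirrors the proof of Corollary~\ref{cor:nice} given in the paper). Your identification of the key technical point---normalising $P_0$ to a unitary via $P_0=U\Lambda$, which is exactly where the Hermitian hypotheses on $H$ and $R$ enter---is correct and well handled.
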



\bibliographystyle{siam}

\begin{thebibliography}{10}

\bibitem{bandrauk06cis}
{\sc A.~Bandrauk, E.~Dehghanian, and H.~Lu}, {\em Complex integration steps in
  decomposition of quantum exponential evolution operators}, Chem. Phys. Lett.,
  419 (2006), pp.~346--350.

\bibitem{bandrauk91ies}
{\sc A.~Bandrauk and H.~Shen}, {\em Improved exponential split operator method
  for solving the time-dependent {S}chr\"{o}dinger equation}, Chem. Phys.
  Lett., 176 (1991), pp.~428--432.

\bibitem{bernier23scs}
{\sc J.~Bernier, S.~Blanes, F.~Casas, and A.~Escorihuela-Tom\`as}, {\em
  Symmetric-conjugate splitting methods for linear unitary problems}, BIT
  Numer. Math., 63 (2023), p.~58.

\bibitem{blanes16aci}
{\sc S.~Blanes and F.~Casas}, {\em A {C}oncise {I}ntroduction to {G}eometric
  {N}umerical {I}ntegration}, {CRC} Press, 2016.

\bibitem{blanes22osc}
{\sc S.~Blanes, F.~Casas, P.~Chartier, and A.~Escorihuela-Tom\`as}, {\em On
  symmetric-conjugate composition methods in the numerical integration of
  differential equations}, Math. Comput., 91 (2022), pp.~1739--1761.

\bibitem{blanes13oho}
{\sc S.~Blanes, F.~Casas, P.~Chartier, and A.~Murua}, {\em Optimized high-order
  splitting methods for some classes of parabolic equations}, Math. Comput., 82
  (2013), pp.~1559--1576.

\bibitem{blanes22asm}
{\sc S.~Blanes, F.~Casas, and A.~Escorihuela-Tom\`as}, {\em Applying splitting
  methods with complex coefficients to the numerical integration of unitary
  problems}, J. Comput. Dyn., 9 (2022), pp.~85--101.

\bibitem{blanes24scs}
{\sc S.~Blanes, F.~Casas, C.~Gonz\'alez, and M.~Thalhammer}, {\em
  Symmetric-conjugate splitting methods for evolution equations of parabolic
  type}, J. Comput. Dyn., 11 (2024), pp.~108--134.

\bibitem{blanes24smf}
{\sc S.~Blanes, F.~Casas, and A.~Murua}, {\em Splitting methods for
  differential equations}, Acta Numerica, 33 (2024), pp.~1--161.
  
\bibitem{casas21cop}
 {\sc F.~Casas, P. Chartier, A.~Escorihuela-Tom\`as, and Y. Zhang}, {Compositions
 of pseudo-symmetric integrators with complex coefficients for the numerical integration of differential equations},
 J. Comput. Appl. Math., 381 (2021), p.~113006 

\bibitem{castella09smw}
{\sc F.~Castella, P.~Chartier, S.~Descombes, and G.~Vilmart}, {\em Splitting
  methods with complex times for parabolic equations}, BIT Numer. Math., 49
  (2009), pp.~487--508.

\bibitem{chambers03siw}
{\sc J.~E. Chambers}, {\em Symplectic integrators with complex time steps},
  Astron. J., 126 (2003), pp.~1119--1126.

\bibitem{creutz89hoh}
{\sc M.~Creutz and A.~Gocksch}, {\em Higher-order hybrid {M}onte {C}arlo
  algorithms}, Phys. Rev. Lett., 63 (1989), pp.~9--12.

\bibitem{hairer06gni}
{\sc E.~Hairer, C.~Lubich, and G.~Wanner}, {\em Geometric {N}umerical
  {I}ntegration. {S}tructure-{P}reserving {A}lgorithms for {O}rdinary
  {D}ifferential {E}quations}, Springer-Verlag, {S}econd~ed., 2006.

\bibitem{hansen09hos}
{\sc E.~Hansen and A.~Ostermann}, {\em High order splitting methods for
  analytic semigroups exist}, BIT Numer. Math., 49 (2009), pp.~527--542.

\bibitem{hormander95sco}
{\sc L.~H\"ormander}, {\em Symplectic classification of quadratic forms, and
  general {M}ehler formulas}, Math. Z., 219 (1995), pp.~413--449.

\bibitem{mclachlan02sm}
{\sc R.~I. McLachlan and R.~Quispel}, {\em Splitting methods}, Acta Numerica,
  11 (2002), pp.~341--434.

\bibitem{munthe-kaas99cia}
{\sc H.~Munthe-Kaas and B.~Owren}, {\em Computations in a free {L}ie algebra},
  Phil. Trans. Royal Soc. A, 357 (1999), pp.~957--981.

\bibitem{prosen06hon}
{\sc T.~Prosen and I.~Pizorn}, {\em High order non-unitary split-step
  decomposition of unitary operators}, J. Phys. A: Math. Gen., 39 (2006),
  pp.~5957--5964.

\bibitem{suzuki90fdo}
{\sc M.~Suzuki}, {\em Fractal decomposition of exponential operators with
  applications to many-body theories and {M}onte {C}arlo simulations}, Phys.
  Lett. A, 146 (1990), pp.~319--323.

\bibitem{suzuki91gto}
{\sc M.~Suzuki}, {\em General theory of
  fractal path integrals with applications to many-body theories and
  statistical physics}, J. Math. Phys., 32 (1991), pp.~400--407.

\bibitem{suzuki95hep}
{\sc M.~Suzuki}, {\em Hybrid exponential
  product formulas for unbounded operators with possible applications to
  {M}onte {C}arlo simulations}, Phys. Lett. A, 201 (1995), pp.~425--428.

\bibitem{virtanen20sci}
{\sc P.~Virtanen, R.~Gommers, T.~E. Oliphant, M.~Haberland, T.~Reddy,
  D.~Cournapeau, E.~Burovski, P.~Peterson, W.~Weckesser, J.~Bright, S.~J. {van
  der Walt}, M.~Brett, J.~Wilson, K.~J. Millman, N.~Mayorov, A.~R.~J. Nelson,
  E.~Jones, R.~Kern, E.~Larson, C.~J. Carey, {\.I}.~Polat, Y.~Feng, E.~W.
  Moore, J.~{VanderPlas}, D.~Laxalde, J.~Perktold, R.~Cimrman, I.~Henriksen,
  E.~A. Quintero, C.~R. Harris, A.~M. Archibald, A.~H. Ribeiro, F.~Pedregosa,
  P.~{van Mulbregt}, and {SciPy 1.0 Contributors}}, {\em {{SciPy} 1.0:
  Fundamental algorithms for scientific computing in Python}}, Nature Methods,
  17 (2020), pp.~261--272.

\bibitem{yoshida90coh}
{\sc H.~Yoshida}, {\em Construction of higher order symplectic integrators},
  Phys. Lett. A, 150 (1990), pp.~262--268.

\end{thebibliography}

\end{document}